\documentclass[oneside,a4paper]{amsart}

\usepackage{amsthm, amsmath, amssymb, amscd,  amsfonts, amstext,
  amsbsy, mathrsfs, enumerate}

\newcommand{\pow}{\ensuremath{\mathscr{P}}}
\newcommand{\RR}{\ensuremath{\mathbb{R}}}
\def\leng{{\rm lh}}
\newcommand{\Bai}{\ensuremath{{}^\omega \omega}}
\newcommand{\Can}{\ensuremath{{}^\omega 2}}
\newcommand{\seqo}{\ensuremath{{}^{<\omega}\omega}}
\newcommand{\AD}{\ensuremath{{\rm \mathsf{AD}}}}
\newcommand{\ZF}{\ensuremath{{\rm \mathsf{ZF}}}}
\newcommand{\DCR}{\ensuremath{{\rm \mathsf{DC}(\mathbb{R})}}}
\newcommand{\ACOR}{\ensuremath{{\rm \mathsf{AC}_\omega(\mathbb{R})}}}
\newcommand{\SLOL}{\ensuremath{{\rm \mathsf{SLO^L}}}}
\newcommand{\SLO}{\ensuremath{{\rm \mathsf{SLO}}}}
\newcommand{\NFS}{\ensuremath{{\rm \lnot \mathsf{FS}}}}
\newcommand{\conc}{{}^\smallfrown}
\newcommand{\rank}[1]{\parallel #1 \parallel}
\newcommand{\imp}{\Rightarrow}
\newcommand{\fhi}{\varphi}
\newcommand{\bSigma}{\mathbf{\Sigma}}
\newcommand{\bPi}{\mathbf{\Pi}}
\newcommand{\bGamma}{\ensuremath{\mathbf{\Gamma}}}
\newcommand{\bDelta}{\mathbf{\Delta}}
\newcommand{\bN}{\mathbf{N}}
\newcommand{\F}{\mathcal{F}}
\newcommand{\G}{\mathcal{G}}
\newcommand{\loc}[2]{{#1}_{\lfloor {#2} \rfloor}}
\newcommand{\B}{\mathcal{B}}
\newcommand{\Lip}{\ensuremath{{\mathsf{Lip}}}}
\newcommand{\Bor}{\ensuremath{{\mathsf{Bor}}}}
\renewcommand{\L}{\ensuremath{{\mathsf{L}}}}
\newcommand{\W}{\ensuremath{{\mathsf{W}}}}
\newcommand{\restr}[2]{#1 \restriction #2}
\renewcommand{\sf}[1]{\ensuremath{{\mathsf{#1}}}}
\newcommand{\seq}[2]{\langle #1 \mid  #2 \rangle}
\newcommand{\onto}{\twoheadrightarrow}
\newcommand{\id}{{\rm id}}

\newtheorem{theorem}{Theorem}[section]
\newtheorem{lemma}[theorem]{Lemma}
\newtheorem*{lemmanonumber}{Lemma}
\newtheorem{corollary}[theorem]{Corollary}
\newtheorem{proposition}[theorem]{Proposition}
\theoremstyle{definition}
\newtheorem{claim}{Claim}[theorem]
\newtheorem{defin}{Definition}
\theoremstyle{remark}
\newtheorem{remark}[theorem]{Remark}

\begin{document}

\title{Baire reductions and
  good
  Borel reducibilities}
\author{Luca Motto Ros}
\date{\today}
\address{Kurt G\"odel Research Center for Mathematical Logic\\
  University of Vienna \\ W\"ahringer Stra{\ss}e 25 \\ 
 A-1090 Vienna \\
Austria}
\email{luca.mottoros@libero.it}
\thanks{Research supported by FWF (Austrian Research Fund) Grant P 19898-N18.}
\keywords{Determinacy, Wadge hierarchy}
\subjclass[2000]{03E15, 03E60}

\begin{abstract}
  In \cite{mottorosborelamenability} we have considered a wide class
  of ``well-behaved'' reducibilities for sets of reals. 
  In this paper we continue with the study of Borel
  reducibilities by 
  proving a dichotomy theorem for the degree-structures induced by
  good Borel reducibilities.  
  This extends and improves the
  results of \cite{mottorosborelamenability} allowing to deal with a
  larger class of notions of reduction 
  (including, among others, the Baire class $\xi$ functions).
\end{abstract}

\maketitle

\section{Introduction}

A \emph{reducibility} for  sets of reals\footnote{As usual, 
we will always identify $\RR$ with the Baire space
  $\Bai$ and call its elements ``reals''.} is simply a collection 
$\F$ of functions from $\RR$ to $\RR$ which is used to reduce a set of
reals to another 
one: given $A,B \subseteq \RR$, we say that $A$ is
\emph{$\F$-reducible} to $B$ just in case there is some $f \in \F$ such that $x
\in A \iff f(x) \in B$ for every $x \in \RR$. Such an $\F$ allows to measure
the ``relative complexity'' of the sets of reals, and
$\F$ itself can be viewed as the ``unit of measurement'' that we are using:
in general, the ``smaller'' is our set $\F$, the more accurate is our
measurement (i.e.\ the finer is the hierarchy of degrees induced by
$\F$).

The first two reducibilities that one encounters in the
literature are the collection $\W$ of all  continuous functions  and the
collection $\L$ of all  Lipschitz functions
with constant less than or equal to $1$. The
corresponding degree-structures were extensively studied (assuming
$\AD$, the \emph{Axiom of Determinacy}) by Wadge,
Steel, Van Wesep and many other set theorists (Martin, Kechris,
Louveau, Saint-Raymond to name a few), and have had many applications in
Set
Theory and Theoretical Computer Science (see for example
\cite{DCindipendentAD} or \cite{duparc}).
Some years ago, Andretta and Martin considered the collection $\Bor$ of
all Borel functions and the
collection $\sf{D}_2$ of all $\bDelta^0_2$-functions, and they proved that 
in both cases the degree-structures induced by those reducibilities look like
the Wadge one, i.e.\ like the one induced by continuous functions. In
\cite{mottorosborelamenability} we have described a general method to
extend this analysis  
to the so-called
\emph{Borel-amenable} reducibilities, among which there are e.g.\ the
continuous 
functions, the Borel functions, the collection
$\sf{D}_\xi$ of
all $\bDelta^0_\xi$-functions for $\xi<\omega_1$, i.e.\ the collection
of those $f$ such that 
$f^{-1}(D) \in \bDelta^0_\xi$ for every $D \in \bDelta^0_\xi$, and so on.
As for the previous cases, we have obtained that whenever $\F$
is a Borel-amenable set of reductions the degree-structure induced by
$\F$ looks like the Wadge one.

Since the $\sf{D}_\xi$'s form a natural stratification of the Borel functions,
the present work was mainly motivated by the natural idea
of considering the other classical stratification of the Borel
functions, namely the \emph{Baire class} $\xi$ functions. Note that
although in
\cite{mottorosnewcharacterization} it has been pointed out that there is a link
between the two stratifications, from the point of view of reducibilities
between sets of reals they clearly have a very different behaviour: in
fact, we will 
prove that the Baire functions, contrarily to the case of the $\sf{D}_\xi$'s, 
induce a degree-structure which looks like the structure of the
$\L$-degrees. This result is obtained comparing again the Baire 
stratification with the Delta stratification, and showing that the
first one gives  reducibilities which are \emph{equivalent} to (i.e.\
induce the same degree-structures as) the ones obtained glueing
together \emph{chains} of Borel-amenable sets of reductions.
On the way of studying these classes of functions, we will introduce the notion
 of \emph{good Borel reducibility} which
considerably extends the definition of Borel-amenability given in
\cite{mottorosborelamenability} (in fact it
includes, among others, all the examples quoted in this introduction):
building on our previous results, we will give a new general method to
study these 
reducibilities, which will lead to the following dichotomy
theorem.
\begin{theorem}\label{theordic1}
  Assume $\AD+\DCR$. If $\F$ is a good Borel set of reductions
  then it induces either a Lipschitz-like or a Wadge-like hierarchy of degrees.
\end{theorem}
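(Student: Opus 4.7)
My plan is to establish the dichotomy by showing that every good Borel set of reductions $\F$ falls into exactly one of two cases: either $\F$ is equivalent, as a reducibility, to some Borel-amenable collection $\G$, or $\F$ is equivalent to a directed union of a strictly increasing chain of Borel-amenable reducibilities. The first case would immediately yield a Wadge-like hierarchy by direct appeal to the main result of \cite{mottorosborelamenability}; the second case would produce a Lipschitz-like hierarchy through a limit/merge argument, in the same spirit in which the Baire class $\xi$ functions are mentioned in the introduction as arising from chains of Delta reducibilities.

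To make the case split precise, I would isolate the one closure clause that separates ``good Borel'' from ``Borel-amenable'', namely closure under the diagonal pasting of countably many elements of $\F$ along a partition of $\RR$ into pieces at a fixed Delta level. Given a countable family $\seq{f_n}{n \in \omega} \subseteq \F$ and a suitable partition, one asks whether the glued function still belongs to $\F$. If yes, a normalisation that preserves $\F$-degrees shows $\F$ is essentially Borel-amenable and we invoke the previous dichotomy. If not, the failure is localised: using the relativisations $\loc{\F}{s}$ for $s \in \seqo$ together with the ``Borel'' part of goodness, one should be able to extract an increasing sequence $\F_0 \subseteq \F_1 \subseteq \dotsb$ of Borel-amenable collections with $\bigcup_n \F_n$ equivalent to $\F$.

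Next, I would transfer the analysis to the chain. Each $\F_n$ separately gives a Wadge-like hierarchy under $\AD+\DCR$, and the global $\F$-degrees are read off by comparing sets across the levels. The semilinear ordering principle $\SLO$ for $\F$ would be proved through Wadge-type games parametrised by $n$, with the winning strategies glued into a single $\F$-reduction via goodness. Pairs that are $\F_n$-incomparable may become $\F_m$-comparable at a higher level $m>n$, but genuinely $\F$-incomparable pairs appear precisely when the comparison type oscillates cofinally along the chain; this is the mechanism that produces antichains strictly larger than the binary antichains of Wadge, yielding the Lipschitz-like pattern. Well-foundedness of $<_\F$ on non-self-dual degrees would follow from $\DCR$ applied to the chain of well-founded local orders.

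The main obstacle, in my view, is the compatibility statement $A \leq_\F B \iff \exists n\,(A \leq_{\F_n} B)$, without which the limit argument collapses. The nontrivial direction is left-to-right: an arbitrary $\F$-reduction must be shown to factor through some finite level of the chain, and this depends delicately on the Borel character of its components combined with the closure properties supplied by goodness. Once this factorisation lemma is in place, the remainder is a bookkeeping exercise assembling the local Wadge-like pictures into the global Lipschitz-like one, and the dichotomy follows according to which of the two cases was triggered by the initial closure test.
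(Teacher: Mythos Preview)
Your proposal has two genuine gaps that prevent it from going through.

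First, you misidentify what ``Lipschitz-like'' means. Both Wadge-like and Lipschitz-like hierarchies have antichains of size at most $2$; this is part of Theorem~\ref{theorgeneralproperties}(ii) and holds for \emph{every} set of reductions once $\SLO^\L$ is available. The distinction is not in antichain size but in what follows a selfdual degree (an $\omega_1$-block of consecutive selfdual degrees in the Lipschitz case, a single nonselfdual pair in the Wadge case) and in the behaviour at limits of uncountable cofinality. Your proposed mechanism --- ``oscillating comparison types'' producing ``antichains strictly larger than the binary antichains of Wadge'' --- therefore cannot be what yields the Lipschitz-like picture; it would in fact contradict $\SLO^\F$, which follows immediately from $\SLO^\L$ via Lemma~\ref{lemmabasic} (no game argument is needed).

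Second, your case split does not cover all good Borel reducibilities. Borel-amenable sets of reductions always have $\Delta_\F = \bDelta^0_\xi$ for some $\xi$, i.e.\ they are of type~II. Hence a good $\F$ of type~I (for instance $\Lip$ or $\sf{UCont}$, with $\Delta_\F \subsetneq \bDelta^0_1$) is neither Borel-amenable nor equivalent to a union of an increasing chain of Borel-amenable reducibilities: any such $\F_n$ would already satisfy $\Delta_{\F_n} \supseteq \bDelta^0_1 \supsetneq \Delta_\F$, so the union would be strictly coarser than $\F$. Your dichotomy ``Borel-amenable vs.\ chain of Borel-amenable'' therefore misses an entire class of examples.

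The paper's route is different and avoids both problems: one classifies $\Delta_\F$ into types I, II, III directly from the \textbf{PC}, proves the Strong Decomposition Property from the \textbf{$\sigma$-BC} (Theorem~\ref{theorSDP}), and then shows that any two good reducibilities with the same characteristic set are equivalent (Theorem~\ref{theorequivalence}). This reduces the problem to analysing one canonical representative per characteristic set --- $\Lip$ for type~I, $\sf{D}_\xi$ for type~II, and a regular chain for type~III --- and the successor analysis for each is carried out explicitly.
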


This improves many of the results obtained in
\cite{mottorosborelamenability} and is a first step toward proving the na\"ive
conjecture that the dichotomy above should hold for all 
``reasonable'' Borel sets of reductions. 
\newline

The paper is organized as follows: in Section \ref{sectionpreliminaries}
we will fix some notation and review some of the results about $\L$-degrees
and Borel-amenable reducibilities that will be needed for the rest
of the work. (We will systematically omit the proofs of these results
--- the reader interested in 
some of them can consult \cite{wadgethesis},
\cite{vanwesepwadgedegrees} or the more succinct \cite{andrettaslo} for 
Lipschitz degrees, and \cite{mottorosborelamenability} for 
Borel-amenable sets of reductions.) In Section \ref{sectiondichotomy}
we will give the 
definition of good Borel reducibility, while in Section
\ref{sectionSDP} we will introduce the Strong Decomposition Property
and  prove some
results which essentially form the framework of the proof of our
dichotomy theorem.  In Section \ref{sectionLipandchains} we will deal
with the cases 
of Lipschitz functions and chains of reductions,
 and these results
will in turn be used in Section \ref{sectionBaire} to analyze the hierarchies
of degrees induced by Baire functions. Finally, in Section
\ref{sectioncomparing} we will show how to compare different
degrees-structures (in particular showing how to obtain a certain
hierarchy from the finer ones).

\section{Preliminaries}\label{sectionpreliminaries}

Unless otherwise stated, we will always assume
$\ZF+\SLOL+\NFS+\DCR$ (see
\cite{mottorosborelamenability} 
and \cite{andrettaslo} for the definitions and for
a brief account
on these axioms). Anyway $\SLOL$ and $\NFS$ are easy consequences of
$\AD$, thus one can also  
safely work in the most well-known theory $\ZF+\AD+\DCR$. In both cases, 
all the ``determinacy axioms'' are
used in a
local way throughout the paper, thus e.g.\ to compare
Borel sets is enough to assume Borel-determinacy.
Our notation and terminology is quite standard, and
 we systematically refer the reader to \cite{mottorosborelamenability}
 for the basic definitions.  
We just recall here that ${}^A B$ denotes the set of all functions from $A$
into $B$, that a set $F \subseteq \Can$ is said \emph{flip-set}
whenever $\exists ! n (z(n) \neq w(n)) \imp ({z \in F} \iff {w \notin F})$
for every $z,w \in \Can$, and that, given any pointclass $\Gamma
\subseteq \pow(\RR)$, a 
function $f \colon \RR \to \RR$ is said \emph{$\Gamma$-function} if
$f^{-1}(D) \in \Gamma$ for every $D \in \Gamma$.\\

Let now $\F \subseteq
{}^\RR \RR$ be a family of functions which is closed under
composition, contains $\L$ and admits a surjection $j \colon \RR \onto
\F$, that is 
a so-called \emph{set of reductions}. Recall that $A \leq_\F B \iff A
= f^{-1}(B)$ for some $f \in \F$ 
(notice that $A \leq_\F B \iff \neg A \leq_\F \neg B$), and let $<_\F$
be the strict 
relation associated to $\leq_\F$.
 Since $\leq_\F$ is a preorder, we
can canonically define the equivalence relation $\equiv_\F$ 
and study the partial order $\leq$ induced by $\leq_\F$ on the
equivalence classes of $\equiv_\F$, which are  called \emph{$\F$-degrees}.
A set $A$ (or its $\F$-degree $[A]_\F$) is said to be
\emph{$\F$-selfdual} if and only if 
$A \leq_\F \neg A$ (otherwise it is
\emph{$\F$-nonselfdual}), and $\{[A]_\F,[\neg A]_\F\}$ is called
\emph{nonselfdual pair} whenever $A \nleq_\F \neg A$.
The \emph{Semi-Linear Ordering Principle} for $\F$ is the statement
\begin{equation}
\tag{$\SLO^\F$}
\forall A,B \subseteq \RR
({A \leq_\F B} \vee
  {\neg B \leq_\F A}).
\end{equation}
\noindent
Under $\SLO^\F$ we have that if $A$ and $B$ are $\leq_\F$-incomparable,
then $B \equiv_\F \neg A$:
thus the ordering induced on the $\F$-degrees is \emph{almost} a
linear-order (it becomes indeed linear if each degree is identified
with its dual). 
If now $\F \subseteq \G \subseteq {}^\RR \RR$ are sets of
  reductions, then  $\leq_\G$ is clearly coarser than $\leq_\F$: hence
  $A \leq_\F B \imp 
A \leq_\G B$, if $A$ is $\F$-selfdual then it is also $\G$-selfdual,
and $[A]_\F \subseteq [A]_\G$.
Moreover the following basic lemma holds.

\begin{lemma}[$\ZF$] \label{lemmabasic}
Let $\F \subseteq \G \subseteq {}^{\RR}\RR$ be two sets of reductions. Then
$\SLO^\F \imp \SLO^\G$, and assuming $\SLO^\F$ we have 
$\forall A,B \subseteq \RR ({A<_\G B} \imp {A <_\F B})$.
\end{lemma}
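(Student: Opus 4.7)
The plan is to handle the two assertions separately, with the first being immediate and the second requiring a short case analysis via $\SLO^\F$.

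For $\SLO^\F \imp \SLO^\G$, I would simply observe that any $f \in \F$ lies in $\G$ (since $\F \subseteq \G$), so $A \leq_\F B$ implies $A \leq_\G B$ for every $A,B$. Applying this to both disjuncts of $\SLO^\F$ immediately yields $\SLO^\G$. This step is entirely formal and uses nothing more than $\F \subseteq \G$.

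For the second assertion, assume $\SLO^\F$ and $A <_\G B$, so $A \leq_\G B$ and $B \nleq_\G A$. The first step is to transfer the nonreducibility down: since $\F \subseteq \G$ forces $\leq_\F \,\subseteq\, \leq_\G$, we automatically get $B \nleq_\F A$. So the only thing left to establish is $A \leq_\F B$, and this is where $\SLO^\F$ enters: by it, either $A \leq_\F B$ (which combined with $B \nleq_\F A$ gives $A <_\F B$ and we are done), or else $\neg B \leq_\F A$.

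The only real work is ruling out the second case, and I expect this to be the main (though still routine) obstacle. The idea is to derive a contradiction with $A <_\G B$. Using the symmetry $X \leq_\F Y \iff \neg X \leq_\F \neg Y$ noted in the text, $\neg B \leq_\F A$ rewrites as $B \leq_\F \neg A$, and hence $B \leq_\G \neg A$. Chaining with $A \leq_\G B$ gives $A \leq_\G \neg A$, which by the same symmetry applied in $\G$ yields $\neg A \leq_\G A$, i.e.\ $A \equiv_\G \neg A$. But then $B \leq_\G \neg A \equiv_\G A$, contradicting $B \nleq_\G A$. Hence the second case is impossible, and the proof is complete.
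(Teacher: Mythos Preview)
Your proof is correct. The paper actually omits the proof of this lemma entirely (it is listed among the preliminary facts whose proofs are deferred to the references), so there is nothing to compare; your argument is the standard one and works as written.
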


This lemma will be mostly used when $\F = \L$: this in particular means
that under our axiomatization we also have $\SLO^\F$ for \emph{every}
set of reductions 
$\F$. Moreover it easily implies that $\leq_\F$ is well-founded (since
$\leq_\L$ is): therefore we can associate a rank $\rank{\cdot}_\F$ to
each set $A \subseteq \RR$ (resp.\ $\F$-degree $[A]_\F$), and speak of
\emph{successor} and \emph{limit} sets (resp.\ $\F$-degrees), and of the
\emph{cofinality} of a set (resp.\ of an $\F$-degree) with the obvious
meaning.
The next theorem sum up  the general properties
of sets of reductions --- see Theorem 3.1 in
\cite{mottorosborelamenability}. Recall that given $A,B,A_n \in \RR$,
$\bigoplus_n A_n$ denotes the set $\bigcup_n (n \conc A_n)$, while $A
\oplus B$ denotes $\bigoplus_n C_n$, where $C_{2k} = A$ and $C_{2k+1} = B$ for
every $k \in \omega$.

\begin{theorem} \label{theorgeneralproperties}
Let $\F \subseteq {}^\RR \RR$ be a set of reductions. Then
\begin{enumerate}[i)]
\item $\leng(\leq_\F)= \Theta$, where $\Theta = \sup \{\alpha \mid f
  \colon \RR \onto \alpha \text{ for some surjection }f\}$; 
\item anti-chains have size at most $2$ and are of the form
  $\{[A]_\F,[\neg A]_\F\}$ for some set $A$;
\item $\RR \nleq_\F \neg \RR = \emptyset$ and if $A \neq \emptyset,
  \RR$ then $\emptyset, \RR <_\F A$;
\item if $A \nleq_\F \neg A$ then $A \oplus \neg A$ is $\F$-selfdual
  and is the successor of both $A$ and $\neg A$. In particular, after
  an $\F$-nonselfdual pair 
  there is a single $\F$-selfdual degree;
\item if $A_0 <_\F A_1 <_\F \dotsc$ is a countable $\F$-chain of subsets of
  $\RR$ then $\bigoplus_n A_n$ is $\F$-selfdual and is
  the supremum of these
  sets. In particular if $[A]_\F$ is
  limit of countable cofinality then $A \leq_\F \neg A$;
\item if $A \nleq_\F \neg A$ and $\G \subseteq \F$ is another set of reductions
 then $[A]_\F = [A]_\G$. In particular, $[A]_\F = [A]_\L$.
\end{enumerate}
\end{theorem}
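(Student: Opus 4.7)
The backbone of the proof is Lemma~\ref{lemmabasic}. Under $\SLOL$ it immediately gives $\SLO^\F$ (Wadge's Lemma at the level of $\F$-degrees), and the implication ``$<_\F \imp <_\L$'' from the same lemma transfers the well-foundedness of $\leq_\L$ (standard from $\SLOL + \NFS + \DCR$) to $\leq_\F$, so the ranks $\rank{\cdot}_\F$ are well-defined. Every item below exploits these two facts together with $\L \subseteq \F$, which puts every Lipschitz map --- the constant functions, the prependings $x \mapsto i \conc x$, and the first-coordinate swap $2k \leftrightarrow 2k+1$ --- at our disposal as $\F$-reductions.

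Item~(ii) is a direct consequence of $\SLO^\F$: applied twice to an incomparable pair $A,B$, it yields $\neg B \leq_\F A$ and $\neg A \leq_\F B$, and combining with the involution $A \leq_\F B \iff \neg A \leq_\F \neg B$ gives $B \equiv_\F \neg A$. Item~(iii) is immediate from the availability of constant functions together with the fact that no map reduces $\RR$ to $\emptyset$. I would then prove item~(vi), the crucial absoluteness lemma: given $B \equiv_\F A$ with $A$ $\F$-nonselfdual and $\G \subseteq \F$ a set of reductions, $\SLO^\G$ applied to $A,B$ gives $B \leq_\G A$ or $\neg A \leq_\G B$; the latter would yield $\neg A \leq_\F B \equiv_\F A$ through $\G \subseteq \F$, contradicting nonselfduality. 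The symmetric argument gives $A \leq_\G B$, whence $[A]_\G = [A]_\F$; specializing $\G = \L$ produces the closing sentence of~(vi).

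For~(iv), the swap map witnesses $A \oplus \neg A \equiv_\F \neg(A \oplus \neg A)$, i.e., $\F$-selfduality; the prependings give $A, \neg A \leq_\F A \oplus \neg A$, strictly by the nonselfduality hypothesis (otherwise $\neg A \leq_\F A$). For the successor clause, assume $A, \neg A <_\F C$ and $A \oplus \neg A \nleq_\F C$: $\SLO^\F$ plus selfduality of $A \oplus \neg A$ forces $C <_\F A \oplus \neg A$, hence $C <_\L A \oplus \neg A$ by Lemma~\ref{lemmabasic}, while $A, \neg A <_\L C$ by the same lemma; this contradicts the classical fact that $A \oplus \neg A$ is the $\L$-immediate successor of $\{[A]_\L, [\neg A]_\L\}$. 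Item~(v) follows the same template: $(A_n)$ is an $\L$-chain by Lemma~\ref{lemmabasic}, so $\bigoplus_n A_n$ is its $\L$-selfdual supremum (classical Wadge/Lipschitz), hence $\F$-selfdual via $\L \subseteq \F$. For the $\F$-supremum property, if some $\F$-upper bound $C$ satisfied $\bigoplus_n A_n \nleq_\F C$, the same $\SLO^\F$-plus-selfduality argument yields $C <_\L \bigoplus_n A_n$; the $\L$-sup property then produces some $A_n$ with $A_n \nleq_\L C$, whence $\SLOL$ gives $C \leq_\L \neg A_n$, and the chain $A_n \leq_\F C \leq_\F \neg A_n$ forces $A_n \equiv_\F C$ --- contradicting $A_n <_\F A_{n+1} \leq_\F C$.

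Finally~(i): the upper bound $\leng(\leq_\F) \leq \leng(\leq_\L) = \Theta$ is immediate from Lemma~\ref{lemmabasic} (strict $\F$-chains lift to strict $\L$-chains) together with the classical length of the Lipschitz hierarchy; the matching lower bound uses~(vi) to show that enough $\L$-nonselfdual degrees survive as $\F$-nonselfdual, producing strict $\F$-chains of any length $<\Theta$. The step I expect to be the main obstacle is the supremum clause of~(v): since $\F$ is closed only under composition --- not under the diagonal combinations that would assemble a single reduction $\bigoplus_n A_n \leq_\F C$ from reductions $A_n \leq_\F C$ --- one cannot construct the witnessing map directly, and the entire argument has to be routed through $\SLO^\F$, the $\L$-hierarchy, and~(vi).
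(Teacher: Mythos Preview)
The paper does not prove this theorem: it is stated as a summary of known facts, with the reader referred to Theorem~3.1 of \cite{mottorosborelamenability} (and, for the underlying Lipschitz theory, to \cite{wadgethesis}, \cite{vanwesepwadgedegrees}, \cite{andrettaslo}). So there is no in-paper proof to compare against, and I can only assess your sketch on its own terms.

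Your arguments for (ii)--(vi) are correct and match the intended ones. In particular, your treatment of the successor clause in~(iv) and the supremum clause in~(v) --- routing the argument through $\SLO^\F$, selfduality, Lemma~\ref{lemmabasic}, and the known $\L$-structure, rather than trying to manufacture a single $\F$-reduction directly --- is exactly right, and your closing remark correctly identifies this as the crux. One cosmetic point in~(v): the step ``$A_n \leq_\F C \leq_\F \neg A_n$ forces $A_n \equiv_\F C$'' passes through the observation that $A_n$ is then $\F$-selfdual, whence $\neg A_n \leq_\F A_n$ and so $C \leq_\F \neg A_n \leq_\F A_n$; spelling this out would help a reader.

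The one real gap is the lower bound in~(i). Item~(vi) runs the wrong direction for your purpose: it says that if $A$ is $\F$-nonselfdual then $[A]_\F = [A]_\L$, but it does \emph{not} say that $\L$-nonselfdual sets remain $\F$-nonselfdual --- and indeed they need not (e.g.\ $\bN_{\langle 0 \rangle}$ is $\L$-nonselfdual but $\W$-selfdual). So one cannot simply lift a long chain of $\L$-nonselfdual degrees into the $\F$-hierarchy. The standard argument instead exploits the surjection $j \colon \RR \onto \F$ built into the definition of ``set of reductions'': for each $A$, the downward cone $\{B : B \leq_\F A\} = \{ j(x)^{-1}(A) : x \in \RR\}$ is a surjective image of $\RR$, hence (Cantor) is not all of $\pow(\RR)$, which both shows there is no top $\F$-degree and that $\rank{A}_\F < \Theta$ for every $A$; a short additional argument then pins the length at exactly~$\Theta$.
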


Thus to determine the hierarchy of degrees induced by some
$\F$ we have only to understand what happens after a single selfdual
degree and at limit levels of uncountable cofinality. 

Given any set of reductions $\F$ (or even just any \emph{set of
  functions}) we can define its \emph{characteristic set} 
$\Delta_\F = \{A \subseteq \RR \mid A \leq_\F \bN_{\langle 0
  \rangle}\}$,
which is formed by all sets $A \subseteq \RR$ which are
\emph{simple}
from the ``point of view'' of $\F$.
As a simple excercise one can check that $\Delta_{\sf{D}_\xi} = \bDelta^0_\xi$
for every countable $\xi$, and that $\Delta_\Bor = \bDelta^1_1$.
It is easy to see that if $\F$ is closed under composition then every
$f \in \F$ 
is a $\Delta_\F$-function (even if the converse is not always true --- see
\cite{mottorosborelamenability} for a counter-example), thus it make sense
to define the \emph{saturation} of $\F$
\[ {\rm Sat}(\F) = \{ f \in {}^\RR \RR \mid f \text{ is a }
\Delta_\F\text{-function}\},\]
and to say that $\F$ is \emph{saturated} just in case $\F = {\rm Sat}(\F)$.
Moreover it is easy to see that $\F \subseteq \G$ implies $\Delta_\F
\subseteq \Delta_\G$ 
(the converse is not true in general, unless $\G = \Bor$ or $\G =
\sf{D}_\xi$ for some 
$\xi < \omega_1$, and $\bN_s \in \Delta_\F$ for every $s \in \seqo$).
Finally, $\F$ is said to be \emph{Borel} if
$\{\bN_s \mid s \in \seqo\} \subseteq\Delta_\F
\subseteq\bDelta^1_1$,
that is if $\F \subseteq \Bor$ and $\F$ recognizes as simple all the
basic clopen sets of $\RR$. 

Here are some basic facts about $\L$-degrees: after a selfdual $\L$-degree
there is always another selfdual $\L$-degree, and a limit 
$\L$-degree is selfdual if and only if it is of countable
cofinality, otherwise it is nonselfdual.
Thus after a selfdual $\L$-degree
$[A]_\L$ there is always an
$\omega_1$-chain of consecutive selfdual
$\L$-degrees, and therefore the $\L$-hierarchy looks like this:
\begin{small}
\begin{equation}\label{pictureLipschitz}\index{Hierarchy!Lipschitz}
\begin{array}{llllllllll}
\bullet & & \bullet & & \bullet & & & &\bullet
\\
& \smash[b]{\underbrace{\bullet \; \bullet \;
\bullet \; \cdots}_{\omega_1}} & &
\smash[b]{\underbrace{\bullet \; \bullet \; \bullet \;
\cdots}_{\omega_1}} & & \cdots\cdots &
\bullet & \cdots\cdots & & \cdots\cdots
\\
\bullet & & \bullet & & \bullet & & & & \bullet
\\
& & & & & &
\stackrel{\uparrow}{\makebox[0pt][r]{\framebox{${\rm cof} = \omega$}}} & &
\, \stackrel{\uparrow}{\makebox[0pt][l]{\framebox{${\rm cof} > \omega$}}}
\end{array}
\end{equation}
\end{small}
\noindent
Any structure of this kind will be called \emph{Lipschitz-like}.

Now we turn our attention to Borel-amenable sets of reductions.

\begin{defin}
  A set of reductions $\F$ is \emph{Borel-amenable} if:
  \begin{enumerate}[i)]
  \item $\Lip \subseteq \F \subseteq \Bor$;
  \item for every $\Delta_\F$-partition $\seq{D_n}{n \in \omega}$ and every collection
$\{f_n \mid n \in \omega\} \subseteq \F$ we have that
\[ f = \bigcup\nolimits_{n \in \omega} (\restr{f_n}{D_n}) \in \F,\]
where $\Lip$ is the collection of \emph{all} Lipschitz functions
(with any constant).
\end{enumerate}
\end{defin}

As an example of Borel-amenable reducibility one can take any of
the $\sf{D}_\xi$'s or $\Bor$. It turns out that for every Borel-amenable
set of reductions $\F$ there is some $\xi \leq \omega_1$ (called the
\emph{level} of $\F$) such that $\Delta_\F = \bDelta^0_\xi$ (where,
 with a little abuse
of notation, we put $\bDelta^0_{\omega_1} = \bDelta^1_1$): thus, in particular,
${\rm Sat}(\F)$ is always either one of the $\sf{D}_\xi$'s  or $\Bor$.
Moreover for any Borel-amenable set of reductions $\F$ we have
the following lemma.

\begin{lemma}\label{lemmapartition}
  Let $\seq{D_n}{n \in \omega}$ be a $\Delta_\F$-partition of $\RR$ and let
$A \neq \RR$.
  \begin{enumerate}[a)]
  \item $\forall n\in \omega(A \cap D_n \leq_\F A)$.
  \item If $C \subseteq \RR$ and $A \cap D_n \leq_\F C$ for every $n
    \in \omega $ then
$A \leq_\F C$.
\item If $\forall n \in \omega(A\cap D_n <_\F A)$ then $A \leq_\F \neg
  A$. Moreover, 
if $D_n = \emptyset$ for all but finitely many $n$'s then $A$ is not limit.
  \end{enumerate}
\end{lemma}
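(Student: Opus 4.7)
My plan is to dispatch (a) and (b) as routine Borel-amenability gluings, derive the main statement of (c) from $\SLO^\F$ plus (b), and handle the \emph{moreover} clause by a rank argument combined with (b).

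For (a), pick any $y \in \RR \setminus A$ (available because $A \neq \RR$) and set
\[
f(x) = \begin{cases} x & \text{if } x \in D_n, \\ y & \text{if } x \in D_m \text{ with } m \neq n. \end{cases}
\]
This $f$ is obtained by gluing $\id$ on $D_n$ and the constant function $c_y$ on each $D_m$ ($m \neq n$) along the given $\Delta_\F$-partition $\seq{D_m}{m \in \omega}$. Since $\id$ and $c_y$ both lie in $\Lip \subseteq \F$, Borel-amenability yields $f \in \F$, and $f$ visibly reduces $A \cap D_n$ to $A$.

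For (b), fix for each $n$ a witness $f_n \in \F$ of $A \cap D_n \leq_\F C$ and set $f = \bigcup_n \restr{f_n}{D_n}$. The same Borel-amenability clause applied to $\seq{D_n}{n \in \omega}$ gives $f \in \F$, and for $x \in D_n$ one has $x \in A \iff x \in A \cap D_n \iff f_n(x) \in C \iff f(x) \in C$, so $A \leq_\F C$ via $f$.

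For the main part of (c), $A \cap D_n <_\F A$ implies $A \nleq_\F A \cap D_n$; applying $\SLO^\F$ (which holds by Lemma \ref{lemmabasic}) to the pair $(A, A \cap D_n)$ yields $\neg(A \cap D_n) \leq_\F A$, equivalently $A \cap D_n \leq_\F \neg A$. Part (b) with $C = \neg A$ now gives $A \leq_\F \neg A$. For the \emph{moreover} clause, suppose towards a contradiction that $[A]_\F$ is limit of rank $\lambda$ and $D_n = \emptyset$ for all $n \geq N$. Since $N$ is finite, $\alpha := \max_{n<N} \rank{A \cap D_n}_\F$ exists and is strictly below $\lambda$. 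Using Theorem \ref{theorgeneralproperties} (well-foundedness, antichains of size at most $2$, and clause (iv) giving the selfdual successor $E' \oplus \neg E'$ of any nonselfdual pair), I pick a single set $E$ of rank $\leq \alpha + 1 < \lambda$ that $\F$-bounds every set of rank $\leq \alpha$ --- concretely, $E = E'$ if rank $\alpha$ hosts an $\F$-selfdual degree, and $E = E' \oplus \neg E'$ otherwise. Then $A \cap D_n \leq_\F E$ for every $n < N$, so (b) yields $A \leq_\F E$, contradicting $\rank{E}_\F < \rank{A}_\F = \lambda$; hence $A$ is not limit.

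The only nontrivial step is the \emph{moreover} clause: producing the single low-rank bound $E$ requires invoking just enough of Theorem \ref{theorgeneralproperties} (specifically the shape of antichains and the existence of selfdual successors of nonselfdual pairs) to dominate the finitely many $A \cap D_n$ inside $\lambda$, after which (b) closes the argument. The remaining parts are essentially Borel-amenable bookkeeping.
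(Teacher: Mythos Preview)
The paper does not actually prove Lemma~\ref{lemmapartition}: it is stated in the preliminaries with the explicit remark that proofs are omitted and the reader is referred to \cite{mottorosborelamenability}. So there is no in-paper argument to compare against; I can only assess correctness, and your proof is correct and is the standard one.

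Parts (a) and (b) are exactly the intended gluing arguments. For (c), your use of $\SLO^\F$ to flip $A\nleq_\F A\cap D_n$ into $A\cap D_n\leq_\F\neg A$ and then invoke (b) is the expected route. Your handling of the \emph{moreover} clause via a rank bound is also fine; two small points worth tightening in exposition: (i) when you apply (b) at the end you need $A\cap D_n\leq_\F E$ for \emph{all} $n$, including $n\geq N$ where $A\cap D_n=\emptyset$ --- this holds because your $E$ is selfdual (either by construction or as $E'\oplus\neg E'$), hence $E\neq\RR$, so $\emptyset\leq_\F E$; (ii) the inequality $\alpha+1<\lambda$ uses that $\lambda$ is limit, which is exactly the hypothesis you are contradicting. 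Both are implicit in what you wrote, but making them explicit removes any doubt.
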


Let us say that $\F$ has the \emph{Decomposition Property} (\textbf{DP}
for short) if for every selfdual $A \notin \Delta_\F$ there is a
$\Delta_\F$-partition $\seq{D_n}{n \in \omega}$ of $\RR$ such that $A \cap D_n <_\F A$
for every $n$ or, equivalently, if for every selfdual $A$ which is
$\L$-minimal
in $[A]_\F$ one has that $A \leq_\L \neg A$ (and $A$ is either limit
or successor of a nonselfdual pair with respect to $\leq_\L$). 
Using this property (which turns out to be a consequence of 
Borel-amenability) we have proved in \cite{mottorosborelamenability} 
that both after a single selfdual degree
and at limit levels of uncountable cofinality there is a nonselfdual pair.
Thus the hierarchy of
degrees induced by $\F$ looks like this:

\begin{small}
\begin{equation}
\begin{array}{llllllllllllll}
\bullet & & \bullet & & \bullet & & & & \bullet & & & \bullet
\\
& \bullet & & \bullet & & \bullet & \cdots \cdots & \bullet
& & \bullet &\cdots\cdots & & \bullet & \cdots
\\
\bullet & & \bullet & & \bullet & & & & \bullet & & & \bullet
\\
& \, \makebox[0pt]{$\stackrel{\uparrow}{\framebox{$\Delta_\F \setminus
\{\emptyset,\RR\}$}}$}& & & & & &
\, \makebox[0pt]{$\stackrel{\uparrow}{\framebox{${\rm cof} = \omega$}}$}
& & & &
\, \makebox[0pt]{$\stackrel{\uparrow}{\framebox{${\rm cof} > \omega$}}$}
\end{array}
\end{equation}
\end{small}
\noindent Any structure of this kind 
will be called \emph{Wadge-like}.
Notice that one gets both the Wadge
hierarchy and the degree-structures induced by $\Bor$ and $\sf{D}_2$
as particular instances of the previous result. 

The \textbf{DP}
allows also to compare different sets of reductions by means of
the degree-structures induced by them. Let us say that two sets of reductions
$\F$ and $\G$ are \emph{equivalent} ($\F \simeq \G$ in symbols) if they induce
the same hierarchy of degrees, that is if for every $A,B \subseteq \RR$ we have
$A \leq_\F B \iff A \leq_\G B$:
if $\F$ and $\G$ are 
 Borel-amenable sets of reductions then\label{equivalentreductions}
\[ \F \simeq \G \iff \Delta_\F = \Delta_\G,\]
that is $\F \simeq \G$ if and only if their degree-structures 
coincide on the first nontrivial level. In particular, since $\Delta_\F =
\Delta_{{\rm Sat}(\F)}$, we have that $\F \simeq {\rm Sat}(\F)$.

We want to conclude this section by recalling how to construct, given
an $\F$-selfdual set $A \subseteq \RR$, where $\F$ is of level $\xi <
\omega_1$, its successor degree(s). Fix
an increasing sequence of ordinals $\langle \mu_n\mid n \in \omega
\rangle$ cofinal in $\xi$, and a sequence of sets $P_n$
such that $P_n \in \bPi^0_{\mu_n} \setminus \bSigma^0_{\mu_n}$.
Let $\langle \cdot,\cdot \rangle\colon \omega \times \omega \to \omega$ be
any  bijection,
and for any zero-dimensional
space\footnote{When $\mathscr{X}=\omega$ we will simply drop the symbol $\mathscr{X}$ in all the
relevant notation.} $\mathscr{X}$ define the homeomorphism
\[ \bigotimes\nolimits^\mathscr{X} \colon  {}^\omega ({}^\omega \mathscr{X})
\to  {}^\omega \mathscr{X} \colon 
\langle x_n\mid  n \in 
\omega \rangle \mapsto x=\bigotimes^\mathscr{X}\nolimits_n x_n,\]
by letting $x(\langle n,m \rangle) = x_n(m)$, and, conversely, the
``projections'' 
$\pi^\mathscr{X}_n\colon  {}^\omega \mathscr{X} \to {}^\omega \mathscr{X}$
by setting  $\pi^\mathscr{X}_n(x) = \langle
x(\langle n,m \rangle)\mid m \in \omega \rangle$ (clearly, every ``projection''
 is
surjective, continuous and open). Note that given  a sequence of functions
$f_n\colon 
{}^\omega \mathscr{X} \to {}^\omega \mathscr{X}$, we can use the homeomorphism
$\bigotimes\nolimits^\mathscr{X}$ to  define
the function
\[ \bigotimes\nolimits^\mathscr{X}\langle f_n\mid
n\in\omega\rangle=\bigotimes\nolimits^\mathscr{X}_n f_n\colon 
{}^\omega \mathscr{X} \to {}^\omega \mathscr{X} \colon  x \mapsto
\bigotimes\nolimits^\mathscr{X}_n f_n(x),\]
and it is not hard to check that $\bigotimes\nolimits^\mathscr{X}_n f_n$
is continuous  if and
only  if all
the $f_n$'s are continuous. 
Now
consider the sets
\[ \Sigma^\xi(A) = \{x \in \RR\mid \exists n (\pi_{2n}(x) \in P_n
\wedge \forall i<n(\pi_{2i}(x) \notin P_i) \wedge \pi_{2n+1}(x) \in
A)\}\]
and
\[ \Pi^\xi(A) = \Sigma^\xi(A) \cup R_\xi,\]
where $R_\xi = \{x\in \RR\mid \forall n(\pi_{2n}(x) \notin P_n) \}$:
 it turns
out
that for every $A \leq_\F \neg A$ the sets
$\Sigma^\xi(A)$ and $\Pi^\xi(A)$ are $\leq_\F$-incomparable and  are
 the immediate successors of $A$.

\section{Good Borel
  reducibilities}\label{sectiondichotomy}

In \cite{mottorosborelamenability} we have studied a special kind of 
Borel reducibilities but, as we will see later in this paper,
there are also other ``natural'' sets of reductions which are not of this
kind (namely Lipschitz functions, uniformly continuous functions, Baire
functions, and so on). Thus our goal is to weaken the condition
of Borel-amenability in order to be able to study also these
other examples.
Recall that the second condition of Borel-amenability says that $f =
\bigcup_n (\restr{f_n}{D_n})\in \F$ 
whenever $\{f_n \mid n \in \omega\} \subseteq \F$ and $\seq{D_n}{n \in
  \omega}$ is a 
$\Delta_\F$-partition of $\RR$. We will weaken this
condition both 
allowing to use only $f_n$'s which are in $\L$ and using the concept
of \emph{boundness} (in a pointclass):
a pointclass $\Lambda$ is
  \emph{($\L$-)bounded} in an $\L$-pointclass $\Gamma$ if there is some $A \in \Gamma$
  such that $B \leq_\L A$ for every $B \in \Lambda$ (which  in
  particular implies that $\Lambda \subseteq \Gamma$).
Moreover, $\seq{D_n}{n \in \omega}$ is a \emph{$\Gamma$-bounded partition} of
$\RR$ if it is a $\Gamma$-partition of $\RR$ such that $\{ D_n \mid
n \in \omega\}$ is bounded in $\Gamma$.

\begin{defin}
We say that $\F$ satisfies the \emph{partitioning condition}
(\textbf{PC} for short) if for every $\Delta_\F$-\emph{bounded} partition
$\seq{D_n}{n \in \omega}$ of $\RR$ and every collection $\{ f_n\mid
n \in \omega\}\subseteq \L$  one has that
\[ f = \bigcup\nolimits_{n \in \omega} (\restr{f_n}{D_n})\in \F.\]
\end{defin}

\noindent
Notice that there are just three types of Borel reducibilities that can satisfy the \textbf{PC}: 

\begin{description}
 \item[TYPE I] $\Delta_\F = \{ A \subseteq \RR \mid A \leq_\L \bN_s \text{ for some }s \in \seqo \}$;
 \item[TYPE II] $\Delta_\F = \bDelta^0_\xi$ for some countable $\xi$ or $\Delta_\F = \bDelta^1_1$;
 \item[TYPE III] $\Delta_\F = \bDelta^0_{<\lambda} = \bigcup_{\mu < \lambda} \bDelta^0_\mu$ for some countable limit ordinal $\lambda$.
\end{description}

The proof of
this fact is 
essentially the same of Proposition 4.3 in
\cite{mottorosborelamenability}. By Borel determinacy we need to consider just two cases\footnote{The principle $\SLO^\L$ for Borel sets, which follows from Borel determinacy, implies that for every pair of $\L$-pointclasses $\Gamma,\Lambda \subseteq \bDelta^1_1$ either $\Gamma \subseteq \Lambda$ or $\breve{\Lambda} \subseteq \Gamma$: therefore if both $\Gamma$ and $\Lambda$ are selfdual either $\Gamma \subseteq \Lambda$ or $\Lambda \subseteq \Gamma$.}, namely  $\bDelta^0_1 \subsetneq \Delta_\F$ and $\Delta_\F \subseteq \bDelta^0_1$. In the first case, assume that $\Delta_\F \neq\bDelta^1_1$ and that $\F$ is not of type III, and let $1 < \xi
<\omega_1$ be the smallest ordinal such that $\Delta_\F \subseteq
\bDelta^0_\xi$. If $D \in \bDelta^0_\xi$, then there is
  some partition $\langle D_n \mid n \in \omega \rangle$ of $\RR$ such that $D =
  \bigcup_{i \in I} D_i$ for some $I \subseteq \omega$ and $D_n \in
  \bPi^0_{\mu_n}$ for some $\mu_n < \xi$ (see Theorem 4.2 in 
\cite{mottorosnewcharacterization}). Since $\bDelta^0_\mu
  \subsetneq \Delta_\F$ for every $\mu < \xi$ (by minimality of
  $\xi$), we have that $\bigcup_{\mu < \xi} \bPi^0_\mu \subseteq \Delta_\F$ by Borel determinacy again, and hence that $\{D_n \mid n\in \omega\} \subseteq \bigcup_{\mu<\xi} \bPi^0_\mu$
  is bounded in $\Delta_\F$ (when $\xi$ is limit use the fact that $\F$ is not of type III). Let $g_0,g_1$ be the constant functions with value $\vec{0}$ and $\vec{1}$, respectively, 
  and put
  $f_i = g_0$ if $i \in I$ and $f_i=g_1$ otherwise. By
  the \textbf{PC},
$f= \bigcup_{n \in \omega}(\restr{f_n}{D_n}) \in \F$ and $f^{-1}(\bN_{\langle 0 \rangle}) = D$, 
i.e.\ $D \in \Delta_\F$: therefore $\bDelta^0_\xi \subseteq \Delta_\F$ and $\F$ is of type II. Finally, the argument for the case $\Delta_\F \subseteq \bDelta^0_1$ is similar to the previous one (it suffices to prove that if $\F$ is not of type I then $\Delta_\F = \bDelta^0_1$), and it is left to the reader.

As a corollary,
one gets  that  $\Delta_\F$ is an
algebra of sets (i.e.\ it is 
closed under complementation and finite intersections).
Moreover, the \textbf{PC} implies $\F \supseteq \L$: therefore, since
$\Delta_\F \subseteq \bDelta^1_1$ already implies that there is a
surjection $j \colon \RR \onto \F$, a Borel \emph{set 
of functions} $\F$ which satisfies the \textbf{PC} is also a \emph{set
of reductions} just in case it is closed under composition. 

Another consequence of the \textbf{PC} is Lemma 4.4 of
\cite{mottorosborelamenability} (since any \emph{finite}
$\Delta_\F$-partition of $\RR$ is obviously bounded in
$\Delta_\F$): if $D \subseteq D'$ are in 
$\Delta_\F$ and $A \subseteq \RR$ is such that $A \cap D' \neq \RR$
then $A \cap D \leq_\F A \cap D'$ (in particular, if $A \neq \RR$ then
$A \cap D \leq_\F A$ for every $D \in \Delta_\F$). Finally, the
\textbf{PC} allows to reprove Lemma \ref{lemmapartition} (using almost
the same argument) 
in  case  $\F$ is a Borel set of reductions which
satisfies the \textbf{PC}
(but non necessarily a Borel-amenable one) as soon as the partition  $\seq{D_n}{n \in \omega}$ is bounded in $\Delta_\F$ and
part  b) is replaced by the following condition:
\begin{equation}\label{eqstar}
\tag{$\star$} \text{if } C \subseteq \RR \text{ and }A \cap D_n
\leq_\L C \text{ for every }n \text{ then } A \leq_\F C. 
\end{equation}

Besides the \textbf{PC},
there is also another condition which is somewhat hidden in the
definition of Borel-amenability.

\begin{defin}\label{defsigmabounded}
If $\Delta$ is an $\L$-pointclass, we
say that an arbitrary function $f \colon \RR \to \RR$ is
\emph{$\sigma$-bounded (in $\Delta$)} if for every 
countable collection $\{ D_n \mid n \in \omega \}$ bounded in 
  $\Delta$  one has that $\{
  f^{-1}(D_n)\mid n \in \omega \}$ is bounded in $\Delta$ as well (thus, in
  particular, $f$ is a $\Delta$-function).
A set of functions  $\F$ satisfies the
  \emph{$\sigma$-boundness condition} (\textbf{$\sigma$-BC} for short)
  if every $f\in \F$ is $\sigma$-bounded in $\Delta_\F$.
\end{defin}

It is easy to check that if $\F$ is of type II then \emph{every}
countable collection $\{ D_n \mid n  \in \omega\} \subseteq\Delta_\F$
is bounded 
in $\Delta_\F$, thus \textbf{$\sigma$-BC} becomes relevant only when
$\F$ is not of type II (this is the reason for which this condition
was not explicity highlighted in \cite{mottorosborelamenability}).
On the other hand, if $\F$ is of type I or III the
\textbf{$\sigma$-BC} turns out to be equivalent to the seemingly
stronger statement ``if $f \in \F$ and $\Gamma$ is bounded in
$\Delta_\F$ (with $\Gamma$ of arbitrary size) then $\{f^{-1}(C) \mid C
\in \Gamma \}$ is bounded in $\Delta_\F$'': this is because in the
cases under consideration $\Delta_\F$ has ``countable cofinality''
(i.e.\ there is a countable chain which is $\L$-unbounded in
$\Delta_\F$), and therefore from every pointclass $\L$-unbounded in
$\Delta_\F$ one can extract a countable subpointclass which is still
$\L$-unbounded in $\Delta_\F$. 

We will see  that the \textbf{PC} and the \textbf{$\sigma$-BC} are
strong enough to 
civilize the hierarchy of degrees induced by $\F$, so let us give the
following definition:

\begin{defin}
  A Borel set of reductions is said to be a \emph{good Borel
    reducibility} if it 
  satisfies both the \textbf{PC} and the \textbf{$\sigma$-BC}. 
  The collection of all good Borel reducibilities
  will be denoted by $\sf{GR}$. 
\end{defin}

Note that the Borel-amenable reducibilities form a \emph{proper}
subset of good Borel  
reducibilities of type II, 
as $\Lip \nsubseteq \sf{D}_\xi^\L$ for any $\xi <
\omega_1$, where $\sf{D}^\L_\xi$ is the collection of those $f$ which
are in $\L$ on a (countable)  $\bDelta^0_\xi$-partition (in particular
this proves that each  
$\sf{D}_\xi^\L$ does not contain \emph{any} Borel-amenable set of
reductions).  In fact, one can easily check that the pseudoidentity
$\id^- \colon \RR \to \RR \colon x \mapsto \seq{x(n+1)}{n \in \omega}$
is such that for \emph{every} countable partition $\seq{D_n}{n \in \omega}$ and
\emph{every} family $\{ f_n \mid n \in \omega \}\subseteq \L$ there is some
$n_0$ such that $\restr{f_{n_0}}{D_{n_0}}\neq
\restr{\id^-}{D_{n_0}}$ (the argument is based on the Baire Category
Theorem and is almost identical to the one used in Remark 6.2 of
\cite{mottorosborelamenability}).

\section{The Strong Decomposition 
Property and the Dichotomy 
  Theorem}\label{sectionSDP}

First we want to prove that every good Borel reducibility
$\F$ has the following bounded version of the Decomposition Property.

\begin{defin}
A set $A\subseteq \RR$ has the \emph{Strong Decomposition Property}
with respect to a Borel set of reductions $\F$ if there is a
$\Delta_\F$-\emph{bounded} partition
  $\seq{D_n}{n\in \omega}$ of $\RR$ such that $A \cap D_n <_\F A$ for
  every $n$. 

  A Borel set of reductions has the \emph{Strong Decomposition
    Property} ({\rm \textbf{SDP}} for short) if  every $A\subseteq
  \RR$ such that $A \leq_\F
  \neg A$ and $A \notin \Delta_\F$ has the Strong Decomposition
  Property with respect to $\F$.
\end{defin}

Observe that if $\F$ is of type II then the \textbf{SDP} is equivalent to
the \textbf{DP} by the observation following Definition
\ref{defsigmabounded}. 

\begin{remark}\label{remSDP}
If $\F$
satisfies the \textbf{$\sigma$-BC}, then $A$ has the Strong
Decomposition Property with
respect to $\F$ if and only if there is some  $B$ in
$[A]_\F$ \label{SDPinvariant} which  has the Strong Decomposition
Property with respect
to $\F$. In fact, let $f\in\F$ be a reduction of $A$ into $B$, and let
$\seq{D'_n}{n \in \omega}$ be a $\Delta_\F$-bounded partition of $\RR$ such
that $B \cap D'_n <_\F B$. Put $D_n = f^{-1}(D'_n)$: 
$\seq{D_n}{n \in \omega}$ is a $\Delta_\F$-bounded partition of $\RR$ by the
\textbf{$\sigma$-BC}, and since $f$ witnesses $A\cap D_n \leq_\F B \cap
D'_n$ we have also
$A \cap D_n \leq_\F B \cap D'_n <_\F B \leq_\F A$.
\end{remark}

If $\F$ is good (and satisfies a simple technical
 condition) then the
\textbf{SDP} can also be recast in an
equivalent way.

\begin{proposition}\label{propequivSDP}
  Let $\F \in \sf{GR}$ be 
  such that $k \conc B \leq_\F B$ for every $k \in \omega$ and every $B
  \subseteq \RR$ (we can require for instance that $\Lip(2) \subseteq
  \F$). Then 
  for every $A \subseteq \RR$ the following are equivalent:
  \begin{enumerate}[i)]
  \item $A$ has the Strong Decomposition Property with respect to $\F$;
  \item if $B$ is $\L$-minimal in $[A]_\F$ then $B\leq_\L \neg B$.
  \end{enumerate}
Moreover, 
    $B$ is either limit or successor of a nonselfdual pair with
    respect to $\leq_\L$.
\end{proposition}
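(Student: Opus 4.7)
The plan is to prove (i) $\Rightarrow$ (ii) first, then extract the moreover clause, and finally derive (i) from (ii) using the added structure. For (i) $\Rightarrow$ (ii), fix $A$ with the \textbf{SDP} and let $B$ be $\L$-minimal in $[A]_\F$. Remark~\ref{remSDP} (available because $\F \in \sf{GR}$ satisfies the \textbf{$\sigma$-BC}) transfers the \textbf{SDP} to $B$, giving a $\Delta_\F$-bounded partition $\langle D_n \mid n \in \omega \rangle$ of $\RR$ with $B \cap D_n <_\F B$; Lemma~\ref{lemmabasic} promotes each inequality to $B \cap D_n <_\L B$. Suppose toward a contradiction that $B$ is $\L$-nonselfdual. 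By the recalled $\L$-structure, $\L$-nonselfdual degrees occur only at $\L$-limits of uncountable cofinality, so $\lambda := \rank{B}_\L$ has uncountable cofinality and $\mu := \sup_n \rank{B \cap D_n}_\L < \lambda$. Set $C := \bigoplus_n (B \cap D_n)$. The $\L$-shifts $x \mapsto n \conc x$ witness $B \cap D_n \leq_\L C$, so the $(\star)$ consequence of the \textbf{PC} applied with this $C$ yields $B \leq_\F C$. The $\L$-rank of $C$ is a countable ordinal above $\mu$ and hence still $< \lambda$; if $B <_\F C$ held, Lemma~\ref{lemmabasic} would give $\rank{B}_\L < \rank{C}_\L$, contradicting $\rank{B}_\L = \lambda$. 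Therefore $B \equiv_\F C$, which places $C$ in $[B]_\F$ with $\rank{C}_\L < \rank{B}_\L$ and contradicts the $\L$-minimality of $B$. Hence $B \leq_\L \neg B$.

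For the moreover, the $\L$-selfdual set $B$ has three structural options (limit, $\L$-successor of an $\L$-nonselfdual pair, or $\L$-successor of some $\L$-selfdual $B'$), and I would rule out the last: the $\L$-successor of $B'$ is represented by $k \conc B'$ for any $k \in \omega$, so $B \equiv_\L k \conc B'$; combining the hypothesis $k \conc B' \leq_\F B'$ with the $\L$-shift $B' \leq_\L k \conc B'$ would force $B \equiv_\F B'$, but $B' <_\L B$ then violates the $\L$-minimality of $B$ in $[B]_\F$. For (ii) $\Rightarrow$ (i), the moreover gives either $B \equiv_\L \bigoplus_n B_n$ for a cofinal $\L$-chain $\langle B_n \mid n \in \omega \rangle$ below $B$, or $B \equiv_\L C \oplus \neg C$ for some $\L$-nonselfdual pair $\{[C]_\L, [\neg C]_\L\}$. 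Fix an $\L$-reduction $h$ realising this equivalence and set $E_k := h^{-1}(\bN_{\langle k \rangle})$; this is a clopen --- hence $\Delta_\F$-bounded --- partition of $\RR$, and one checks $B \cap E_k = h^{-1}(k \conc P_k)$ with $P_k = B_k$ (limit case) or $P_k \in \{C, \neg C\}$ (successor case). Composing $h$ with the $\F$-function witnessing $k \conc P_k \leq_\F P_k$ provided by the hypothesis yields an $\F$-reduction $B \cap E_k \leq_\F P_k$. Since $P_k <_\L B$ and $B$ is $\L$-minimal in $[B]_\F$, we have $P_k <_\F B$, so $B \cap E_k <_\F B$ for every $k$: this is the \textbf{SDP} for $B$, transferred to $A$ by Remark~\ref{remSDP}.

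The main obstacle is the contradiction in the first step: it requires combining the uncountable cofinality of $\lambda$ (which forces both $\mu < \lambda$ and $\rank{C}_\L < \lambda$) with the $\L$-minimality of $B$ and Lemma~\ref{lemmabasic} to squeeze a strictly $\L$-simpler representative of $[B]_\F$ out of the \textbf{SDP} data. A secondary delicate point pervading the other two steps is that the hypothesis $k \conc B \leq_\F B$ is of exactly the right strength to let $\F$ absorb a single $\L$-successor step, which both rules out $\L$-successors of $\L$-selfduals as $\L$-minimal representatives and delivers the $\F$-reductions $B \cap E_k \leq_\F P_k$ needed to build the \textbf{SDP} partition.
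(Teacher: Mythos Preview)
Your argument is correct, but both implications take a longer path than the paper's. For $(i)\Rightarrow(ii)$ the paper does not argue by contradiction: working with the \textbf{SDP} partition of $A$ itself, it observes that the pieces $B_n = A \cap D_n$ cannot be $\L$-bounded by a single $B_m$ (else $(\star)$ would give $A \leq_\F B_m <_\F A$), so $B := \bigoplus_n B_n$ is $\L$-selfdual; then the sup fact ``$B_n <_\L C$ for all $n$ implies $\bigoplus_n B_n \leq_\L C$'' shows directly that this $B$ is $\L$-minimal in $[A]_\F$. Your contradiction via the uncountable cofinality of $\lambda$ works (the bound you need is simply $\rank{C}_\L \leq \mu + 1 < \lambda$, which is what ``a countable ordinal above $\mu$'' should cash out to), but it invokes that same sup fact in a more roundabout way and requires first transferring the \textbf{SDP} from $A$ to $B$ via Remark~\ref{remSDP}. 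For $(ii)\Rightarrow(i)$ the paper avoids your case split entirely: it cites the classical fact that any $\L$-selfdual $B$ satisfies $\loc{B}{k} <_\L B$, notes that $B \cap \bN_{\langle k \rangle} = k \conc \loc{B}{k} \leq_\F \loc{B}{k}$ by the standing hypothesis, and concludes that the partition $\langle \bN_{\langle k \rangle} \mid k \in \omega \rangle$ already witnesses the \textbf{SDP} for $B$. Your route through the representation $B \equiv_\L \bigoplus_k P_k$ and the pulled-back partition $\langle E_k \rangle$ is valid but consumes the ``moreover'' as input; the paper's version is a uniform one-line argument and derives the ``moreover'' only afterwards, independently.
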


\begin{proof}
If $A = \emptyset$ or $A = \RR$ neither \textit{i)} nor \textit{ii)}
can hold, thus 
we can assume $A \neq \emptyset,\RR$.
  If $A$ has the Strong Decomposition Property with respect
  to $\F$, let
  $\seq{D_n}{n \in \omega}$ be a $\Delta_\F$-bounded partition of $\RR$ such
  that $A \cap D_n <_\F A$ for every $n$ and put $B_n = A \cap
  D_n$. Clearly we can not have that 
  there is an $m \in \omega$ such that $B_n \leq_\L B_m$ for every
  $n\in \omega$, otherwise $A \leq_\F B_m$ by condition
  \eqref{eqstar}, a contradiction! 
Therefore $\forall m \exists n (B_n \nleq_\L B_m)$ and hence $B  =
\bigoplus_n B_n$ is 
$\L$-selfdual. Moreover $B \leq_\L C$ for every $C \in [A]_\F$ since
$B_n <_\L C$ for every $n 
\in \omega$:   on the other hand, $A \leq_\F B$ by condition
\eqref{eqstar} again, hence $B$ is $\L$-minimal in  
$[A]_\F$.

Assume now that \textit{ii)} holds. Recall that if $C$ is an arbitrary subset of $\RR$
and $k \in \omega$, then $\loc{C}{k}$ denotes the set $\{ x \in \RR
\mid k \conc x \in C \}$. It is a classical fact that since $B$ is $\L$-selfdual we have 
 $\loc{B}{k} <_\L B$ for
every $k \in \omega$, and hence by $\L$-minimality of $B$ in $[A]_\F$ we
have also $\loc{B}{k}<_\F B$. Our technical condition implies that $B
\cap\bN_{\langle k \rangle} \leq_\F \loc{B}{k}$, and since
$\seq{\bN_{\langle k \rangle}}{k\in \omega}$ is
always bounded in $\Delta_\F$ then $B$ has the Strong Decomposition
Property with
respect to $\F$. But by Remark \ref{remSDP} this implies that  $A$ has
the Strong 
Decomposition Property
with respect to $\F$ as well.
The last part of the statement easily follows from our technical
condition and the $\L$-minimality of $B$ in $[A]_\F$, as if $B$ is the successor with respect to $\leq_\L$ of an $\L$-selfdual set $B'$ then $B \equiv_\L 0 \conc B'$ (see e.g.\ \cite{vanwesepwadgedegrees} or \cite{andrettaslo} for a proof of this easy fact).
\end{proof}

We will  prove in Theorem \ref{theorSDP} that the
\textbf{$\sigma$-BC} already implies  the \textbf{SDP}
(also in absence of the \textbf{PC} and of the other technical
condition), but first we need the next proposition, which is a deep application of the Martin-Monk
method and a
further strengthening of Theorem 16 in \cite{andrettamartin} and of Theorem 5.3 in
\cite{mottorosborelamenability}.
As for the other results of this kind, we will
use the following lemma (probably due to Kuratowski, see Lemma 5.1 in
\cite{mottorosborelamenability} and the references given there).

\begin{lemma}[\ZF+\ACOR]\label{lemmarel}
Let $d$ be the usual metric on $\RR$,
$\tau$ the topology induced by $d$, and let $\xi$ be any
nonzero countable ordinal. For any family $\{D_n \mid n \in \omega\} \subseteq
\bDelta^0_\xi$ there is a metric $d'$ on $\RR$ such that
\begin{enumerate}[i)]
\item $(\RR,\tau')$ is Polish and zero-dimensional, where $\tau'$ is the
topology induced by $d'$;
\item $\tau'$ refines $\tau$;
\item each $D_n$ is $\tau'$-clopen;
\item there is a countable clopen basis $\B'$ for $\tau'$ such that
$\B' \subseteq \bDelta^0_\xi$.
\end{enumerate}
\end{lemma}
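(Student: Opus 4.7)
The plan is to proceed by transfinite induction on the countable ordinal $\xi$, following Kuratowski's classical strategy of refining the topology one Borel set at a time and then passing to a countable limit. For the base case $\xi=1$, each $D_n$ is already $\tau$-clopen, so one takes $d'=d$ and the standard clopen basis $\{\bN_s\mid s\in\seqo\}\subseteq\bDelta^0_1$; all four clauses are then trivial.

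For the inductive step, the key auxiliary statement is this: given any Polish zero-dimensional topology $\tau^*$ on $\RR$ refining $\tau$, with a countable clopen basis consisting of $\bDelta^0_\xi$-sets (with respect to $\tau$), and any single $A\in\bDelta^0_\xi$, one can produce a further Polish zero-dimensional refinement $\tau_A$, still admitting a countable clopen basis inside $\bDelta^0_\xi$, in which $A$ is clopen. To prove this, I would write $A=\bigcup_k A_k$ and $\RR\setminus A=\bigcup_k B_k$ with $A_k,B_k\in\bPi^0_{\eta_k}$ for suitable $\eta_k<\xi$, apply the overall inductive hypothesis at the lower levels to the family $\{A_k,B_k\}$ to produce a refinement in which each $A_k,B_k$ is clopen, and then observe that $A$ itself becomes both $F_\sigma$ and $G_\delta$ in this finer topology; a standard further step (splitting the space as $A\sqcup(\RR\setminus A)$ and refining each piece by a Kuratowski cut-and-paste) turns $A$ into a clopen set. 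Closure of $\bDelta^0_\xi$ under finite intersections, together with the inclusion $\bPi^0_\eta\subseteq\bDelta^0_\xi$ for $\eta<\xi$, ensures the basis remains inside $\bDelta^0_\xi$.

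Granted the auxiliary statement, the lemma follows by iteration: set $\tau_0=\tau$ and $\tau_{n+1}=(\tau_n)_{D_n}$, then let $\tau'$ be the topology generated by $\bigcup_n\tau_n$. A countable basis $\B'$ for $\tau'$ is obtained by taking all finite intersections of basic opens drawn from the various $\tau_n$'s; this gives a countable collection of clopen sets lying in $\bDelta^0_\xi$, so conditions (ii), (iii), (iv), and the zero-dimensionality half of (i) are immediate by construction.

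The main obstacle is the Polishness clause in (i). Here I would run the standard diagonal argument: the map $\iota\colon(\RR,\tau')\to\prod_n(\RR,\tau_n)$ sending $x$ to the constant sequence $(x,x,x,\ldots)$ is a homeomorphism onto its image by the very definition of $\tau'$ as the supremum of the $\tau_n$. Since each $(\RR,\tau_n)$ is Polish and Hausdorff, the countable product is Polish and the diagonal is closed in it (being the intersection over $n<m$ of the preimages of the diagonal in $(\RR,\tau_n)\times(\RR,\tau_m)$, each of which is closed because the finer topologies remain Hausdorff). Hence $(\RR,\tau')$ is homeomorphic to a closed subspace of a Polish space, so itself Polish, and any compatible complete metric on the image pulls back along $\iota$ to the required $d'$. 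The role of $\ACOR$ is to choose, at each stage of the iteration, the data produced by the auxiliary statement.
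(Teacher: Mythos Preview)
The paper does not prove this lemma: it records it as a classical result (``probably due to Kuratowski'') and simply refers to Lemma~5.1 of \cite{mottorosborelamenability} and the references there. Your argument is precisely the standard Kuratowski strategy, and your Polishness-via-diagonal-embedding step is the usual one (cf.\ \cite{kechris}, \S13), so in spirit there is nothing to compare.

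There is, however, one imprecision worth flagging in the inductive step. The induction hypothesis, as you formulate it, produces refinements making countably many $\bDelta^0_\eta$-sets clopen for $\eta<\xi$; but the pieces $A_k,B_k$ you obtain lie only in $\bPi^0_{\eta_k}$. When $\xi=\eta+1$ and $\eta_k=\eta$, these sets are in $\bDelta^0_\xi$ but need not lie in any $\bDelta^0_{\eta'}$ with $\eta'<\xi$, so the hypothesis does not literally apply to them. The repair is routine: either strengthen the induction statement to cover countable families in $\bigcup_{\eta<\xi}\bPi^0_\eta$ (with basis then landing in $\bDelta^0_\xi$), or unwind one more layer --- write each $A_k$ as $\bigcap_m C_{k,m}$ with $C_{k,m}\in\bSigma^0_{<\eta_k}\subseteq\bDelta^0_{\eta_k}$, apply the hypothesis to the $C_{k,m}$'s so that each $A_k$ becomes \emph{closed} in the refinement, and then use the elementary closed-to-clopen step (topological sum of $A_k$ and its complement). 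Note also that once every $A_k,B_k$ is genuinely clopen, $A=\bigcup_k A_k$ and $\neg A=\bigcup_k B_k$ are both \emph{open}, hence $A$ is already clopen; your ``standard further step'' is then superfluous. With this adjustment the argument goes through and yields exactly the four clauses.
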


\begin{proposition}\label{propSDPforminimal}
Assume that $\F$ is of type {\rm I}, {\rm II} or {\rm III} and has the {\rm \textbf{$\sigma$-BC}}.
Let $A \subseteq \RR$ be such that $A \leq_\F \neg A$, $A \notin
\Delta_\F$ and $A$ is $\L$-minimal in its $\F$-degree. Then $A$ 
has the Strong Decomposition Property with respect to $\F$.
\end{proposition}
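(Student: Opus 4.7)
The plan is to argue by contradiction, adapting the Martin--Monk construction of Theorem~16 of~\cite{andrettamartin} and Theorem~5.3 of~\cite{mottorosborelamenability} to the bounded setting provided by the \textbf{$\sigma$-BC}. Assume $A$ lacks the SDP. Since $A$ is $\L$-minimal in its $\F$-degree and this degree contains $\neg A$, one gets $A \leq_\L \neg A$; fix a Lipschitz witness $h$. The failure of the SDP supplies, for every $\Delta_\F$-bounded partition $\seq{D_n}{n \in \omega}$, a ``concentrated'' piece $D_n$ on which $A \cap D_n$ is $\F$-above $A$ (and hence, by $\L$-minimality, $\L$-above $A$). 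This concentration, combined with the switch provided by $h$, is the engine that will drive a flip-set against $\NFS$.

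The next step is to invoke Lemma~\ref{lemmarel} to tame the geometry. Pick a countable family $\{E_k \mid k \in \omega\} \subseteq \Delta_\F$ that is $\L$-cofinal in $\Delta_\F$; this is available in all three regimes, trivially in type~II (every countable subfamily of $\Delta_\F$ is bounded) and via the countable $\L$-cofinality of $\Delta_\F$ noted after Definition~\ref{defsigmabounded} in types~I and~III. Lemma~\ref{lemmarel} produces a zero-dimensional Polish refinement $\tau'$ of the usual topology in which every $E_k$ is clopen; the Lipschitz games relevant to the analysis remain determined on $(\RR,\tau')$ by the local determinacy built into our axiomatization. Now run the Martin--Monk recursion: at each stage take the current $\Delta_\F$-bounded partition refining the $E_k$, use the failure of SDP to pick a concentrated piece, use $h$ to flip the $A$-status, and apply the \textbf{$\sigma$-BC} to pull the next bounded partition back through the reduction in force, so the recursion iterates. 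Reading a bit $z \in \Can$ along the resulting branching tree of $\tau'$-contracting reductions produces, for every $z$, a real $x(z)$ with the property that toggling a single coordinate of $z$ toggles the $A$-membership of $x(z)$; then $F = \{z \in \Can \mid x(z) \in A\}$ is a flip-set, contradicting $\NFS$.

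The main obstacle is exactly the propagation of $\Delta_\F$-boundedness through the recursion. In type~II this is costless, but in types~I and~III a bounded partition has a uniform $\L$-ceiling in $\Delta_\F$ that must be preserved; it is precisely the \textbf{$\sigma$-BC} (together with the countable $\L$-cofinality of $\Delta_\F$ in these cases) that allows us to pull a bounded partition back through a function of $\F$ and still land inside a single bounded family. A secondary difficulty is coordinating the Lipschitz constants of the successive reductions with the $\tau'$-metric so that the branches of the tree converge uniformly in $z \in \Can$; this is where the clopen refinement supplied by Lemma~\ref{lemmarel} pays off, since in $\tau'$ the sets $E_k$ become basic clopen and the Lipschitz contractions needed to run the construction are manufactured from the finite information given by their characteristic functions.
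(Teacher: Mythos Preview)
Your overall strategy---contradiction via a Martin--Monk construction yielding a flip-set---is the right one, and the observation that $\L$-minimality of $A$ in $[A]_\F$ together with $\neg A\in[A]_\F$ immediately gives $A\leq_\L\neg A$ is correct (the paper does not exploit this shortcut and instead fixes a reduction $f\in\F$ of $A$ into $\neg A$). However, the sketch has two genuine gaps.

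First, the type~II setup is confused: you ask for a countable $\L$-cofinal family $\{E_k\}\subseteq\Delta_\F$, justified in type~II by the parenthetical ``every countable subfamily of $\Delta_\F$ is bounded'', but that is precisely the \emph{negation} of countable cofinality. When $\Delta_\F=\bDelta^0_\xi$ there is no countable $\L$-cofinal family (otherwise $\bigoplus_k E_k\in\bDelta^0_\xi$ would be an $\L$-maximum there), and the paper handles type~II separately exactly because boundedness is automatic and the \textbf{$\sigma$-BC} plays no role. Second, and more seriously, the convergence mechanism is missing. Your maps $h,g_n\in\L$ are contractions for the \emph{original} metric $d$, not for the metric of your single $\tau'$, so the phrase ``$\tau'$-contracting reductions'' is unjustified; if instead you run the recursion in $d$, you must arrange that each reduction carries $C_{n+1}$ into $C_n$ so that the composed maps nest and the branches converge. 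This range requirement is precisely what the paper's Claim~\ref{claimclaim} supplies---a reduction $f_n\colon\RR\to C_n$ with range \emph{in} $C_n$---and obtaining it forces $f_n$ out of $\L$ into $\sf{D}_{\mu_n}$. It is to make these $f_n$ and $f_n\circ f$ continuous that the paper builds an \emph{increasing sequence} of topologies $\tau_n$ via repeated applications of Lemma~\ref{lemmarel}, not a single $\tau'$. Your proposal does not address the range problem, and ``manufactured from the finite information given by their characteristic functions'' does not indicate any mechanism for either obstacle.
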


\begin{proof}
 We start by considering the case in which $\F$ is of type III, the
other cases  will be treated in a similar way.
First observe that since $A \leq_\F \neg A$ we have $A \nleq_\F {A \cap D} \iff {A \cap D} <_\F A$.
Let $\xi<\omega_1$ be
such that $\Delta_\F = \bDelta^0_{<\xi}$, and let $f \in \F$ be any reduction of $A$ into
$\neg A$.
Toward a contradiction, assume that for every $\Delta_\F$-bounded partition
$\seq{D_n}{n \in \omega}$ of $\RR$ there is some $n_0 \in \omega$ such that $A \leq_\F
A \cap D_{n_0}$.
We will construct three sequences\footnote{The major difference from the present argument and the proof of Theorem 5.3 in \cite{mottorosborelamenability} is that in this case we will not require that $C_{n+1} \subseteq C_n$, so that we will have to use some special $f_n$'s (which ``jump'' from one $C_n$ into the next one) rather than the identity function.}
\[
 \seq{C_n}{n \in \omega},\ \seq{d_n}{n \in \omega},\
\seq{f_n}{n \in \omega}
\]
such that for every $n \in \omega$:
\begin{enumerate}[i)]
\item $C_n \in \Delta_\F$ and $A \leq_\F A \cap C_n$;
\item $f_n \colon  \RR \to C_n$ is such that $f_n^{-1}(A \cap C_n)
= A$ (i.e.\ $f_n$ reduces $A$ to $A \cap C_n$), and hence also 
$f_n^{-1}(\neg A \cap C_n) = \neg A$;
\item $d_n$ is a metric on $\RR$ such that the induced topology $\tau_n$  is 
zero-dimensional and Polish, refines all the previous $\tau_m$'s ($m \leq n$), 
$C_n$ is $\tau_n$-clopen,
and both $f_n \colon  (\RR, \tau_{n+1}) 
\to (\RR,\tau_n)$ and $f_n \circ f \colon  (\RR, \tau_{n+1}) \to (\RR,\tau_n)$ are
continuous;
\item for every $m \leq n$ and every $x,y \in C_{n+1}$
\begin{equation}\label{eqCauchy}
\tag{$*$} d_m (g_m \circ \dotsc \circ g_n(x), g_m \circ \dotsc \circ g_n(y)) < 2^{-n}
\end{equation}
where for each $i$ either $g_i = \restr{f_i \circ f}{C_{i+1}}$ or 
$g_i = \restr{f_i}{C_{i+1}}$. 
\end{enumerate}
Observe that by ii) we have that $f_n \circ f \colon  \RR \to C_n$ is such that
\[ \forall x \in C_{n+1} (x \in A \cap C_{n+1} \iff f_n \circ f(x) \in \neg A \cap C_n),\]
and that $f_n \colon  \RR \to C_n$ is such that
\[ \forall x \in C_{n+1} (x \in A \cap C_{n+1} \iff f_n (x) \in A \cap C_n).\]
Having these sequences, we will be able to construct a flip-set 
(Wadge-reducible to $A$) using essentially the same argument contained in the proof 
of Theorem 16 in \cite{andrettamartin}. For every  $z\in \Can$ put $g^z_n =f_n \circ f$ if $z(n) = 1$,
and $g^z_n = f_n$ otherwise.
For every $n \in \omega$ choose some $y_{n+1} \in
C_{n+1}$, and for every $m \leq n$ put
$x^z_{n,m} = g^z_m\circ \dotsc \circ
g^z_n(y_{n+1}) \in C_m$.
If we fix $m$ we get that $g^z_m(x^z_{n,m+1}) = x^z_{n,m}$ for every $n>m$,
and that $\{x^z_{n,m} \mid n \geq m\} \subseteq C_m$ is
a Cauchy sequence with respect to $d_m$ by \eqref{eqCauchy}. 
Therefore we can put $x^z_m  = \lim_{n \to \infty} x^z_{n,m}$
and notice that $x^z_m \in C_m$ by the fact that $C_m$ is $\tau_m$-closed,
and that $g^z_m(x^z_{m+1}) =
x^z_m$ by continuity of $g^z_m$. 
Now it is easy to verify
that $F = \{ z \in \Can \mid x^z_0 \in
A\}$
is a flip-set.\\

The construction of the required sequences  will be carried out
by induction on $n$. To reach this goal we will construct also two
auxiliary sequences 
\[ \seq{\mathcal{P}_n}{n \in \omega},\ \seq{\mu_n}{n \in \omega}\]
such that:
\begin{enumerate}[1)]
\item $\mu_n$ is an increasing  sequence of ordinals smaller than $\xi$ and
$f_n$ is a $\bDelta^0_{\mu_n}$-function;
\item $\tau_n$ admits a countable basis $\mathcal{B}_n \subseteq
  \bDelta^0_{\mu_n}$; 
\item $\mathcal{P}_n = \seq{D^n_m}{m \in \omega}$ is a 
$\bDelta^0_{\mu_n}$-partition 
of $\RR$ (in particular is bounded in $\Delta_\F$), 
$\mathcal{P}_{n+1}$ refines 
$\mathcal{P}_n$, $C_n = D^n_m$ for some $m$, and each $D^n_m$ is 
$\tau_n$-clopen.
\end{enumerate}
At stage $n$ we will define $C_n$, $\mathcal{P}_n$,
$f_n$ together with $d_{n+1}$ and $\mu_{n+1}$. 
First let $C_0 = \RR$, $\mathcal{P}_0$ be defined by $D^0_0 = \RR$ and
$D^0_{m+1} = \emptyset$,  
$f_0 = \id$,
$\mu_0 = 1$, and $d_0$ be the usual metric on 
$\RR$. By \textbf{$\sigma$-BC} there is
some $\mu_1 < \xi$ such that $\{f^{-1}(\bN_s) \mid s \in \seqo\} \subseteq
\bDelta^0_{\mu_1}$, and we can let $d_1$ be the metric obtained applying 
Lemma \ref{lemmarel} to this collection of sets 
(so that $f = f_0 \circ f \colon  (\RR,\tau_1)
\to (\RR,\tau_0)$ is continuous). For the inductive 
step we need the following claim, which is analogous to Claim 5.3.1 of \cite{mottorosborelamenability}.

\begin{claim}\label{claimclaim}
Let $D \subseteq \RR$ be in $\bDelta^0_\mu$ (for some $\mu
<\xi$). If $A \leq_\F A \cap D$ then there is $g \in \sf{D}_\mu$ such that 
$g \colon  \RR \to D$ and $g$ reduces $A$ to $A \cap D$.
\end{claim}

\begin{proof}[Proof of the Claim]
We can assume $D \neq \emptyset, \RR$ 
and $\neg A \cap D
\neq \emptyset$, as if $D = \RR$ then we can simply take $g$ to be the identity, while if $D = \emptyset$ or $D \subseteq A$ then $A \leq_\F A \cap D = D$ would contradict $A \notin \Delta_\F$.
By the observation above we
have that $A \nleq_\F A \cap D \iff A \cap D <_\F A$, and by Lemma
\ref{lemmabasic} and $\L$-minimality of $A$ in its $\F$-degree we
have that $A \cap D <_\F A \iff A \cap D <_\L
A$. Thus $A \leq_\F A \cap D$  implies that either $A \leq_\L A \cap D$ or, by $\SLOL$, 
$\neg A \leq_\L A \cap D$. If the second
alternative holds, then since $A \cap D \leq_{\sf{D}_\mu} A$ (see Lemma 4.4 in
\cite{mottorosborelamenability}) one
also has $A \leq_{\sf{D}_\mu} \neg A$: thus in every case $A
\leq_{\sf{D}_\mu} A \cap D$. Let $g' \in \sf{D}_\mu$ be a witness
of this fact. Let $k \in \sf{D}_\mu$ be defined by $k(x) = x$ if $x \in D$ and $k(x) = y$ otherwise,
where $y$ is any fixed point in $\neg A \cap D$. Letting $g = k \circ
g'$  it is easy to
check that our claim holds.
\renewcommand{\qedsymbol}{$\square$ \textit{Claim}}
\end{proof}

Now suppose to have constructed all the sequences until stage $n$, that is 
$C_i$, $\mathcal{P}_i$, $f_i$, $d_j$ and $\mu_j$ for $i \leq n$ and $j \leq
n+1$. 
Recall also from Claim 5.3.2 of \cite{mottorosborelamenability} that
for every $m \leq n$ there is a $\bDelta^0_{\mu_m}$-partition $\{
C^i_m \mid i \in \omega\}$ of $\RR$ such that $d_m\text{-}{\rm diam}(C^i_m) <
2^{-n}$ and $C^i_m$ is  $\tau_m$-clopen for every $i \in \omega$.
Fix $s\in {}^{n+1}2$ and let $g^s_k$ be
defined (for every $k \leq n$) by
\[
g^s_k = 
\begin{cases}
f_k \circ f & \text{if }s(n)=1\\
f_k & \text{if }s(n)=0.
\end{cases}
\] 
 Let 
$\seq{D^0_{i,s}}{i \in \omega}$ be an enumeration of
\[ \{ (g^s_0 \circ \dotsc \circ
g^s_n)^{-1}(C^i_0) \cap D^n_m \mid i,m \in \omega \}\]
and for $k<n$ let $\seq{D^{k+1}_{i,s}}{i \in \omega}$ be an enumeration
of
\[ \{ (g^s_{k+1} \circ \dotsc\circ
g^s_n)^{-1}(C^i_{k+1}) \cap D^k_{j,s} \mid i,j
\in\omega \}.\]
Arguing by induction on $k \leq n$, 
it is not hard to see that $\langle D^n_{i,s} \mid 
i \in \omega\rangle$ is a $\bDelta^0_{\mu_{n+1}}$-partition
which refines $\mathcal{P}_n$, and that each
$D^n_{i,s}$ is $\tau_{n+1}$-clopen since,  by
induction on $k<n$ again, one can prove that
$g^s_k \circ \dotsc \circ g^s_n \colon  (\RR,
d_{n+1}) \to (\RR, d_k)$
is continuous (and the $\tau_{n+1}$-clopen sets are contained 
in $\bDelta^0_{\mu_{n+1}}$, as $\B_{n+1} \subseteq 
\bDelta^0_{\mu_{n+1}}$ by inductive
hypothesis).

Now fix an enumeration $\seq{s_l}{l<2^{n+1}}$ of ${}^{n+1} 2$
and inductively repeat
the argument above but using $\seq{D^n_{i,s_l}}{i \in \omega}$ instead of
$\mathcal{P}_n$ at stage $l+1$. Let $\mathcal{P}_{n+1} =
\seq{D^{n+1}_m}{m \in \omega}$ be the
final partition of $\RR$ obtained at stage $2^{n+1}$, and observe that
one has again that $D^{n+1}_m \in \bDelta^0_{\mu_{n+1}}$ and that 
$D^{n+1}_m$ is $\tau_{n+1}$-clopen for every $m
\in \omega$.  Choose $\bar{m} \in \omega$ such that $A \leq_\F A 
\cap D^{n+1}_{\bar{m}}$
(such an $\bar{m}$ must exist by our assumption, since
$\mathcal{P}_{n+1}$ is a  
$\Delta_\F$-bounded partition of $\RR$), put $C_{n+1} = D^{n+1}_{\bar{m}}$, 
and let 
$f_{n+1}$ 
be the function obtained applying  Claim
\ref{claimclaim} to $C_{n+1}$. 
We claim that there is some $\mu_{n+2} \geq\mu_{n+1}$ 
smaller than $\xi$ such
that both
\[ \{ f_{n+1}^{-1}(B) \mid B \in \B_{n+1} \} \subseteq
\bDelta^0_{\mu_{n+2}}\quad 
\text{and} \quad
\{ (f_{n+1} \circ f)^{-1}(B) \mid B \in \B_{n+1} \} 
\subseteq \bDelta^0_{\mu_{n+2}}.\]
In fact, the first part is obvious (since $f_{n+1}$ is a 
$\bDelta^0_{\mu_{n+1}}$-function
and $\B_{n+1} \subseteq \bDelta^0_{\mu_{n+1}}$). For the second part, since 
$\{ f_{n+1}^{-1}(B) \mid B \in \B_{n+1} \} \subseteq \bDelta^0_{\mu_{n+1}}$
 is countable 
and bounded in $\Delta_\F$, by the \textbf{$\sigma$-BC} there must 
be some $\nu < \xi$ such that
\[ \{ f^{-1}(f_{n+1}^{-1}(B)) \mid B \in \B_{n+1} \} \subseteq \bDelta^0_\nu.\]
Put $\mu_{n+2} = \max \{ \mu_{n+1},\nu\}$: it is easy to check that 
$\mu_{n+2}$ is as required.

Finally, apply Lemma \ref{lemmarel} to the collection
\[ \{ f_{n+1}^{-1}(B) , (f_{n+1} \circ f)^{-1}(B) \mid B \in \B_{n+1}\} 
 \subseteq \bDelta^0_{\mu_{n+2}}\]
to get $d_{n+2}$ with the desired properties (in particular, we have that both 
$f_{n+1} \circ f \colon  (\RR,d_{n+1}) \to (\RR,d_n)$
and $f_{n+1} \colon  (\RR,d_{n+1}) \to (\RR,d_n)$ 
are continuous).
It is not hard to check that the sequences inductively constructed in
this way satisfy all the
conditions required, and this conclude the proof for the case when $\F$
is of type III.

Now let us consider the other possibilities for the set of reductions
$\F$: if $\F$ is of type I we can use the same argument as above but
avoiding to construct the $\mu_n$'s,
letting $d_n$ be
the usual metric on $\RR$ for every $n \in \omega$ 
(thus dropping essentially condition iii), 
and constructing the partitions $\mathcal{P}_n$ in such a 
way that for each $n \in \omega$ 
there is some $k_n$
such that each element of $\mathcal{P}_n$, and in particular $C_n$, 
is $\L$-reducible to
$\bN_{0^{(k_n)}}$ (the collection $\Delta_{k_n} = 
\{A \subseteq \RR \mid A \leq_\L
\bN_{0^{(k_n)}}\}$ can be easily seen
to be closed under finite intersections and unions)\footnote{One has also 
to modify the statement of Claim \ref{claimclaim} in the following way: ``Let
$D \subseteq \RR$ be in $\Delta_{k_n}$ (for some $k_n \in \omega$). If
$A \leq_\F A \cap D$ then there is a $g\colon  \RR \to D$ such that $g$ is a $\Delta_{k_n}$-function which reduces $A$
to $A \cap D$''.}.
Finally, if $\F$ is of type II one can repeat the argument above (in a
slightly simpler way) taking advantage of the fact that \emph{every}
countable family of $\bDelta^0_\xi$
sets is bounded in $\bDelta^0_\xi$.
\end{proof}

\begin{remark}
We can completely remove the hypothesis that $A$ is $\L$-minimal in its $\F$-degree and reprove Proposition \ref{propSDPforminimal} assuming only $\ZF+\ACOR + \NFS$ (thus giving essentially a direct proof of Theorem \ref{theorSDP} under a weaker axiomatization) whenever $\F$ satisfies the following property (which is a consequence of \textbf{PC}): if $D \in \Delta_\F$  and $f$ is a constant function then ${\restr{\id}{D}} \cup {\restr{f}{\neg D}}$ is in $\F$. This is because in this case we can compose any reduction $f \in \F$ of $A$ into $A \cap D$ with the function $k$ defined in the proof of Claim \ref{claimclaim} to get that if $A \leq_\F A \cap D$ then there is some $g \in \F$ such that $g \colon \RR \to D$ and $g$ reduces $A$ to $A \cap D$. This fact can then be used to construct the $f_n$'s and conclude the argument exactly in the same way. About the construction, one should just be careful in the inductive step, and check that an  ordinal $\mu_{n+2}$ with the desired properties exists because  both $f_{n+1}$ and $f$ are $\sigma$-bounded in $\Delta_\F$, and the composition of $\sigma$-bounded functions is still $\sigma$-bounded.
\end{remark}

Now we are ready to prove the Strong Decomposition Theorem.

\begin{theorem}\label{theorSDP}
  Let $\F$ be a Borel set of reductions which satisfies the {\rm
    \textbf{$\sigma$-BC}}. Then  $\F$ has the
  {\rm \textbf{SDP}}.
\end{theorem}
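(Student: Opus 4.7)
The plan is to combine Proposition~\ref{propSDPforminimal} with Remark~\ref{remSDP}: the latter lets me replace an arbitrary witness in a fixed $\F$-degree by an $\L$-minimal one, and the former then supplies the desired bounded partition for that $\L$-minimal representative.

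In detail, fix $A \subseteq \RR$ with $A \leq_\F \neg A$ and $A \notin \Delta_\F$. Since $\leq_\L$ is well-founded under our axiomatization, I pick $B \in [A]_\F$ which is $\L$-minimal in $[A]_\F$. From $B \equiv_\F A$ one still gets $B \leq_\F \neg B$ (by composing an $\F$-reduction of $B$ into $A$, the given $\F$-reduction of $A$ into $\neg A$, and an $\F$-reduction of $\neg A$ into $\neg B$) and $B \notin \Delta_\F$ (since $\Delta_\F$ is closed under $\equiv_\F$).

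Next I check that Proposition~\ref{propSDPforminimal} is applicable to $B$. The hypothesis on the $\sigma$-\textbf{BC} is granted by assumption. For the type restriction, recall that $\Delta_\F$ is an $\L$-closed selfdual pointclass lying between $\{\bN_s \mid s \in \seqo\}$ and $\bDelta^1_1$; the axiom $\SLOL$ forces $\SLO^\L$ at the level of $\L$-pointclasses inside $\bDelta^1_1$, and this, combined with the selfduality and $\L$-closedness of $\Delta_\F$ and the fact that it contains all basic clopen sets, is enough to pin $\Delta_\F$ down to one of the three pointclasses of Type I, II or III. Thus Proposition~\ref{propSDPforminimal} applies to $B$ and yields a $\Delta_\F$-bounded partition $\seq{D'_n}{n \in \omega}$ of $\RR$ with $B \cap D'_n <_\F B$ for every $n$.

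Finally I transfer the partition back to $A$ as in Remark~\ref{remSDP}. Fix any $f \in \F$ witnessing $A \leq_\F B$ and set $D_n = f^{-1}(D'_n)$: the $\sigma$-\textbf{BC} promotes the $\Delta_\F$-boundedness of $\seq{D'_n}{n \in \omega}$ to the $\Delta_\F$-boundedness of $\seq{D_n}{n \in \omega}$ (which is automatically a partition of $\RR$), while the inequalities $A \cap D_n \leq_\F B \cap D'_n <_\F B \leq_\F A$ show that $\seq{D_n}{n \in \omega}$ witnesses the SDP for $A$. The only genuinely nontrivial step is the type-classification argument justifying the use of Proposition~\ref{propSDPforminimal}; everything else reduces to straightforward bookkeeping using $\sigma$-\textbf{BC}.
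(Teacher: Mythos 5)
Your strategy for the part of the theorem where $\F$ is of Type I, II or III is essentially the paper's: pass to an $\L$-minimal $B\in[A]_\F$, invoke Proposition~\ref{propSDPforminimal}, and pull the partition back through an $\F$-reduction via the $\sigma$-\textbf{BC} as in Remark~\ref{remSDP}. But there is a genuine gap in the step where you claim that the hypotheses force $\Delta_\F$ to be one of the three types. That classification (Type I/II/III) is established in the paper \emph{under the assumption that $\F$ satisfies the} \textbf{PC}: the argument given there needs to glue together countably many constant functions along a $\Delta_\F$-bounded partition to show that certain sets land in $\Delta_\F$, and this requires the partitioning condition, not merely $\sigma$-\textbf{BC}. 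Theorem~\ref{theorSDP} is stated for an arbitrary Borel set of reductions satisfying only the $\sigma$-\textbf{BC}, so there is no reason $\Delta_\F$ must equal one of $\Delta_\Lip$, $\bDelta^0_\xi$, or $\bDelta^0_{<\xi}$; in general one may have $\bDelta^0_{<\xi}\subsetneq\Delta_\F\subsetneq\bDelta^0_\xi$ for some countable $\xi$ (there are many selfdual $\L$-pointclasses between any two consecutive $\bDelta^0_\xi$'s). Selfduality and $\L$-closedness of $\Delta_\F$, together with $\SLOL$, do not pin it down to the three listed pointclasses.

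The missing case must be handled separately, and Proposition~\ref{propSDPforminimal} (as stated) does not apply to it. The paper treats it directly: if $\bDelta^0_{<\xi}\subsetneq\Delta_\F\subsetneq\bDelta^0_\xi$, then $\F\subseteq\sf{D}_\xi$, so any $\F$-selfdual $A$ is $\sf{D}_\xi$-selfdual; the already-known \textbf{SDP} for $\sf{D}_\xi$ gives a $\bDelta^0_\xi$-partition witnessing the decomposition, which is then refined to a $\bigcup_{\mu<\xi}\bPi^0_\mu$-partition; since $\bDelta^0_{<\xi}\subsetneq\Delta_\F$ this refined partition is in fact $\Delta_\F$-bounded, and $A\cap D_n<_\F A$ follows from $\SLO^\F$. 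Notably, that case does not even use the $\sigma$-\textbf{BC}. You should add this second branch to complete your proof.
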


\begin{proof}
Assume first that $\F$ is of type I, II or III. Let $A \leq_\F \neg A \notin \Delta_\F$ and
let $B$ be $\L$-minimal in $[A]_\F$: then  $B$ has the Strong Decomposition
Property with
respect to $\F$ by Proposition \ref{propSDPforminimal}, which by Remark
\ref{remSDP} implies that $A$ has the
Strong Decomposition Property with respect to $\F$ as well.

Now assume that $\F$ is not of  type I--III, i.e.\ that $\bDelta^0_{<\xi} \subsetneq \Delta_\F \subsetneq \bDelta^0_\xi$ for some countable $\xi$ (notice that in this case we will not use the \textbf{$\sigma$-BC}).
By Proposition 3.3 of \cite{mottorosborelamenability} we have that $\F
\subseteq \sf{D}_\xi$, thus if $A \leq_\F \neg A$ we have also $A
\leq_{\sf{D}_\xi} \neg A$. By the \textbf{SDP} for $\sf{D}_\xi$,
there must be a $\bDelta^0_\xi$-partition $\seq{D'_n}{n \in \omega}$ of
$\RR$ such that $A \cap D'_n <_{\sf{D}_\xi} A$ for every $n$.
This partition can be refined to a $\bigcup_{\mu < \xi}
\bPi^0_\mu$-partition $\seq{D_n}{n \in \omega}$ of $\RR$ with the same
property, that is such that $A \cap D_n <_{\sf{D}_\xi} A$ for
every $n$. But $\bigcup_{\mu<\xi} \bPi^0_\mu$ is easily seen to
be bounded in $\Delta_\F$, and $A \cap
D_n <_\F A$ by $\SLO^\F$.
\end{proof}

The Strong Decomposition Theorem (together with part \textit{c)} of
Lemma \ref{lemmapartition}) implies that $A \leq_\F \neg A$ if and only
if $A$ has the Strong Decomposition Property  with respect to $\F$,
thus if $\F$ is 
good we can adjoin the condition $A \leq_\F \neg A$ to the equivalents
of Proposition \ref{propequivSDP}.
Moreover, as a corollary of Theorem \ref{theorSDP} one gets also that if $\F$ is
a good Borel reducibility then at limit levels of
uncountable cofinality there is a nonselfdual pair.

\begin{corollary}\label{coruncountable}
  Let $\F$ be a good Borel set of reductions and let $[A]_\F$ be
  a selfdual limit degree. Then $[A]_\F$ is of countable cofinality.
\end{corollary}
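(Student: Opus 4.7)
The plan is to invoke the Strong Decomposition Theorem (Theorem \ref{theorSDP}) to extract a $\Delta_\F$-bounded partition that witnesses \textbf{SDP} for $A$, and then to show that the $\F$-ranks of the pieces provide a countable cofinal sequence in $\rank{A}_\F$. Since $[A]_\F$ is a limit degree, $\rank{A}_\F$ is a limit ordinal, which in particular forces $A \notin \Delta_\F$ (sets in $\Delta_\F$ live on the bottom level of the hierarchy). Combined with $A \leq_\F \neg A$, Theorem \ref{theorSDP} therefore delivers a $\Delta_\F$-bounded partition $\seq{D_n}{n \in \omega}$ of $\RR$ with $A \cap D_n <_\F A$ for every $n$.

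What remains is to verify that $\sup_n \rank{A \cap D_n}_\F = \rank{A}_\F$. I would argue by contradiction: suppose $\beta = \sup_n \rank{A \cap D_n}_\F$ is strictly smaller than $\rank{A}_\F$. Since $\rank{A}_\F$ is a limit ordinal, I can pick some $C'$ of rank strictly between $\beta$ and $\rank{A}_\F$ and then pass to $C = C' \oplus \neg C'$, which is $\F$-selfdual by Theorem \ref{theorgeneralproperties}\textit{iv)} and whose rank still lies in the same open interval (using limit-ness once more). Because $C$ is selfdual with $\rank{A \cap D_n}_\F \leq \beta < \rank{C}_\F$, one gets $A \cap D_n <_\F C$ for every $n$; applying Lemma \ref{lemmabasic} with $\L$ in the role of $\F$ and the present $\F$ in the role of $\G$, this upgrades to $A \cap D_n <_\L C$, and in particular to $A \cap D_n \leq_\L C$ for every $n$. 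Since the partition is $\Delta_\F$-bounded, condition \eqref{eqstar} (available because $\F$ satisfies \textbf{PC}) then yields $A \leq_\F C$, contradicting $\rank{A}_\F > \rank{C}_\F$.

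The main obstacle is precisely the bridge between $\leq_\F$ and $\leq_\L$: the only partition-assembly tool available for a good (rather than Borel-amenable) reducibility is \eqref{eqstar}, which insists on $\L$-reductions on each piece. It is therefore essential that the \textbf{SDP} supplied a $\Delta_\F$-\emph{bounded} partition (so the $\sigma$-\textbf{BC} side of goodness enters through Theorem \ref{theorSDP}) and that Lemma \ref{lemmabasic}, resting on $\SLO^\L$, lets me downgrade strict $\F$-reductions into a selfdual set to $\L$-reductions. These two features of good Borel reducibilities are exactly what rules out uncountable cofinality at selfdual limit levels.
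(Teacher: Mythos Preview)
Your proof is correct and follows essentially the same route as the paper: both invoke Theorem~\ref{theorSDP} to obtain a $\Delta_\F$-bounded partition with $A\cap D_n<_\F A$, use the limit hypothesis to find an intermediate set strictly between the pieces and $A$, upgrade the strict $\F$-reductions to strict $\L$-reductions via Lemma~\ref{lemmabasic}, and then apply condition~\eqref{eqstar} to reach a contradiction. The only difference is presentational --- the paper packages the ``no common upper bound below $A$'' step as an explicit condition $(\dagger)$ $\forall n\,\exists m\,(A\cap D_n<_\F A\cap D_m)$, while you phrase the same content as $\sup_n\rank{A\cap D_n}_\F=\rank{A}_\F$ and produce the test set $C$ directly.
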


\begin{proof}
  Let $\seq{D_n}{n \in \omega}$ be a $\Delta_\F$-bounded partition of $\RR$
  such that $A\cap D_n <_\F A$ and 
\begin{equation}\label{eqdagger}
\tag{$\dagger$} \forall n \in \omega \exists m\in \omega (A \cap D_n <_\F A \cap D_m) 
\end{equation}
(such a
  partition must exist by Theorem 
  \ref{theorSDP} and by the fact that $[A]_\F$ is limit): then $\mathcal{A} = \{ [A \cap D_n]_\F \mid n \in \omega\}$
  witnesses that $[A]_\F$ is of countable cofinality (use condition $\eqref{eqstar}$ and the fact that if $A \cap D_n \leq_\F B$ for every $n$ then $A \cap D_n <_\L B$ by \eqref{eqdagger} and Lemma \ref{lemmabasic}).
\end{proof}

The Strong Decomposition Theorem implies also  that we can compare good Borel
reducibilities with respect to the degree-structures induced by
them.

\begin{theorem}\label{theorequivalence}
  Let $\F$ and $\G$ be two Borel sets of reductions such that $\G$ has
  the {\rm \textbf{SDP}}, $\F$ satisfies the {\rm \textbf{PC}}, and $\Delta_\G
  \subseteq \Delta_\F$. Then for every $A,B \subseteq \RR$
\[ A \leq_\G B \imp A \leq_\F B.\]

In particular, if $\F$ and $\G$ are good Borel reducibilities
then $\F \simeq \G$ if and only if $\Delta_\F = \Delta_\G$.
\end{theorem}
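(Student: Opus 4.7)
My plan is to prove the main implication $A \leq_\G B \Rightarrow A \leq_\F B$ by direct case analysis on the $\G$-type of $A$, and then deduce the ``in particular'' clause by symmetry, using Theorem \ref{theorSDP} to ensure that \textbf{SDP} transfers between $\F$ and $\G$.

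When $A \in \{\emptyset, \RR\}$ the hypothesis $A \leq_\G B$ already forces $B$ to be non-trivial in the appropriate direction, and a constant (hence Lipschitz) function witnesses $A \leq_\F B$. Assuming $A \neq \emptyset, \RR$, I split into three subcases. If $A$ is $\G$-nonselfdual, then Theorem \ref{theorgeneralproperties}\textit{(vi)} (applied to $\L \subseteq \G$) gives $[A]_\G = [A]_\L$, covering the case $A \equiv_\G B$, while Lemma \ref{lemmabasic} upgrades $A <_\G B$ to $A <_\L B$ for the strict case; either way $A \leq_\L B$ and hence $A \leq_\F B$. If $A$ is $\G$-selfdual with $A \in \Delta_\G$, then $\{A, \neg A\}$ is a finite, hence $\Delta_\F$-bounded, partition of $\RR$ (using $\Delta_\G \subseteq \Delta_\F$), and \textbf{PC} of $\F$ glues two constant Lipschitz functions chosen to land inside and outside $B$ respectively (both being non-empty since $A \leq_\G B$ and $A \neq \emptyset, \RR$ force $B \neq \emptyset, \RR$), producing the desired reduction.

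The central case is $A$ is $\G$-selfdual with $A \notin \Delta_\G$. The \textbf{SDP} of $\G$ supplies a $\Delta_\G$-bounded partition $\seq{D_n}{n \in \omega}$ of $\RR$ with $A \cap D_n <_\G A$; combining this with $A \leq_\G B$ gives $A \cap D_n <_\G B$ (strict, since $A \cap D_n \equiv_\G B$ together with $A \leq_\G B$ would force $A \leq_\G A \cap D_n$), and Lemma \ref{lemmabasic} promotes each of these strict inequalities to $A \cap D_n <_\L B$, furnishing Lipschitz $g_n$ that reduce the pieces to $B$. Since $\Delta_\G \subseteq \Delta_\F$ the partition remains $\Delta_\F$-bounded, and \textbf{PC} of $\F$ assembles $f = \bigcup_n \restr{g_n}{D_n} \in \F$, which manifestly satisfies $f^{-1}(B) = A$.

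The key technical point — and what motivates the asymmetry in the hypotheses — is obtaining \emph{Lipschitz} reductions on the pieces of the decomposition: \textbf{PC} accepts only $\L$-ingredients, while the reductions witnessing $\leq_\G$ need not even lie in $\F$. Lemma \ref{lemmabasic}, resting on the universal $\SLO^\L$, is precisely what bridges this gap. For the ``in particular'' clause, if $\F, \G \in \sf{GR}$ then by Theorem \ref{theorSDP} each of them satisfies both \textbf{PC} and \textbf{SDP}, so the first part runs in both directions whenever $\Delta_\F = \Delta_\G$, yielding $\F \simeq \G$; the converse is immediate, since $\Delta_\F = \{A \mid A \leq_\F \bN_{\langle 0 \rangle}\}$ is determined by $\leq_\F$ alone.
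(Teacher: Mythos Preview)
Your proof is correct and follows essentially the same approach as the paper (which defers to Theorem 4.7 of \cite{mottorosborelamenability}, replacing \textbf{DP} by \textbf{SDP}): case split on $\G$-selfduality of $A$, handle the nonselfdual case via $[A]_\G = [A]_\L$ and Lemma \ref{lemmabasic}, and in the selfdual case use the (strong) decomposition to get pieces $A \cap D_n <_\G A \leq_\G B$, promote to $A \cap D_n <_\L B$ via Lemma \ref{lemmabasic}, and glue the resulting Lipschitz reductions using \textbf{PC} for $\F$ --- this is precisely condition \eqref{eqstar}. Your derivation of the ``in particular'' clause from Theorem \ref{theorSDP} is also the intended one.
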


The proof is identical to the one of Theorem 4.7 in
\cite{mottorosborelamenability} --- the only obvious modification is 
that we have 
to use \textbf{SDP} instead of \textbf{DP}.
Using Theorem \ref{theorequivalence}, one can now obtain the dichotomy theorem for good Borel reducibilities (which is 
simply a more detailed
recasting of Theorem \ref{theordic1}).

\begin{theorem}\label{theordic}
  Let $\F$ be a good Borel reducibility. Then one of
  the following holds:
  \begin{enumerate}[i)]
  \item $\F$ induces a Wadge-like degree-structure:
\begin{small}
\begin{equation*}
\begin{array}{llllllllllllll}
\bullet & & \bullet & & \bullet & & & & \bullet & & & \bullet
\\
& \bullet & & \bullet & & \bullet & \cdots \cdots & \bullet
& & \bullet &\cdots\cdots & & \bullet & \cdots
\\
\bullet & & \bullet & & \bullet & & & & \bullet & & & \bullet
\\
& & & & & & &
\, \makebox[0pt]{$\stackrel{\uparrow}{\framebox{${\rm cof} = \omega$}}$}
& & & &
\, \makebox[0pt]{$\stackrel{\uparrow}{\framebox{${\rm cof} > \omega$}}$}
\end{array}
\end{equation*}
\end{small}
  \item $\F$ induces a Lipschitz-like degree-structure:
\begin{small}
\begin{equation*}
\begin{array}{llllllllll}
\bullet & & \bullet & & \bullet & & & &\bullet
\\
& \smash[b]{\underbrace{\bullet \; \bullet \;
\bullet \; \cdots}_{\omega_1}} & &
\smash[b]{\underbrace{\bullet \; \bullet \; \bullet \;
\cdots}_{\omega_1}} & & \cdots\cdots &
\bullet & \cdots\cdots & & \cdots\cdots
\\
\bullet & & \bullet & & \bullet & & & & \bullet
\\
& & & & & &
\stackrel{\uparrow}{\makebox[0pt][r]{\framebox{${\rm cof} = \omega$}}} & &
\, \stackrel{\uparrow}{\makebox[0pt][l]{\framebox{${\rm cof} > \omega$}}}
\end{array}
\end{equation*}
\end{small}
  \end{enumerate}
In particular, the first alternative holds if $\F$ is of type {\rm II},
while the second alternative holds if $\F$ is either of type {\rm I} or of
type {\rm III}.
\end{theorem}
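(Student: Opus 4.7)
The plan is to proceed by cases on the type of $\F$; by the earlier classification in this section, every good Borel reducibility is of type I, II, or III, and these cases are exhaustive. In all three cases, the interesting question is only what happens just above a single $\F$-selfdual degree, since everywhere else the shape of the hierarchy is already forced: Theorem \ref{theorgeneralproperties} handles successors of nonselfdual pairs and suprema of countable $\F$-chains, while Corollary \ref{coruncountable} shows that every limit $\F$-degree of uncountable cofinality must be a nonselfdual pair (in both alternatives).

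For the type II case, I would invoke Theorem \ref{theorequivalence} directly. Here $\Delta_\F$ equals $\bDelta^0_\xi$ for some countable $\xi$, or $\bDelta^1_1$; let $\G$ be the Borel-amenable reducibility $\sf{D}_\xi$ (respectively $\Bor$) with the same characteristic set. Then $\G$ satisfies the \textbf{PC} and the \textbf{SDP} (the latter because, being Borel-amenable, it satisfies the stronger \textbf{DP}), while $\F$ satisfies the \textbf{PC} by definition and the \textbf{SDP} by Theorem \ref{theorSDP}. Applying Theorem \ref{theorequivalence} in both directions we conclude $\F \simeq \G$, so $\F$ inherits the Wadge-like shape of the $\G$-hierarchy recalled in Section \ref{sectionpreliminaries}.

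For types I and III, the goal is the Lipschitz-like alternative. My plan is to show the characteristic structural property: immediately above every $\F$-selfdual degree $[A]_\F$ sits another $\F$-selfdual degree, which by iteration produces the required $\omega_1$-chain of consecutive selfduals before a nonselfdual pair can appear. Using Proposition \ref{propequivSDP} I may replace $A$ by an $\L$-minimal representative $B \in [A]_\F$ satisfying $B \leq_\L \neg B$, and then take the $\L$-successor $B'$ of $B$, which is again $\L$-selfdual (and hence $\F$-selfdual since $\L \subseteq \F$). Since $\L \subseteq \F$, trivially $B \leq_\F B'$; the real content is to establish the strict inequality $B <_\F B'$, i.e.\ that $B'$ is not $\F$-reducible to $B$.

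The main obstacle will be precisely this strictness check: a priori an $\F$-reduction could collapse $B'$ onto $B$ even when no $\L$-reduction does. Ruling this out requires exploiting the specific geometry of $\Delta_\F$ in types I and III — namely that $\Delta_\F$ has countable cofinality in the $\L$-ordering of pointclasses (realized by the basic neighbourhoods $\bN_s$ in type I, and by a cofinal chain of $\bDelta^0_\mu$'s with $\mu < \lambda$ in type III) — together with the \textbf{$\sigma$-BC} to force any purported $\F$-reduction into behaviour that is essentially local and Lipschitz on each piece of a suitable $\Delta_\F$-bounded partition, contradicting the $\L$-minimality of $B$ in $[A]_\F$. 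I expect the two subcases to require parallel but technically distinct arguments (as suggested by the outline pointing to Section \ref{sectionLipandchains}), since the cofinal chain in $\Delta_\F$ is realized in structurally different ways in the two types.
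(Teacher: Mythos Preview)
Your treatment of type II is fine and matches the paper: both you and the author reduce to the canonical representative $\sf{D}_\xi$ (or $\Bor$) via Theorem \ref{theorequivalence} and inherit the known Wadge-like structure.

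For types I and III, however, there is a genuine gap. You propose to take the $\L$-successor $B'$ of the $\L$-minimal $B \in [A]_\F$ and then argue that $B' \nleq_\F B$. This statement is simply false. Concretely, for $\F = \Lip$ the $\L$-successor of an $\L$-selfdual $B$ is $[0 \conc B]_\L$, and $0 \conc B \leq_{\Lip} B$ via a function in $\Lip(2)$; indeed the paper computes explicitly that $[B]_\Lip = \bigcup_{n \in \omega} [0^{(n)} \conc B]_\L$, so a whole $\omega$-block of consecutive $\L$-degrees gets collapsed into a single $\Lip$-degree. The situation for type III is worse, not better: by Proposition \ref{propsuccessor} even the $\sf{D}_\mu$-successors $\Sigma^\mu(B)$ and $\Pi^\mu(B)$ are $\vec{\F}$-reducible back to $B$ (for every $\mu < \xi$), so the $\L$-successor certainly is too. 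No amount of appeal to $\sigma$-boundedness or the cofinal geometry of $\Delta_\F$ will rescue the claim $B' \nleq_\F B$, because it fails already for the canonical representatives.

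What the paper does instead is exactly what you did for type II: it uses Theorem \ref{theorequivalence} uniformly in all three cases to reduce to a canonical representative ($\Lip$ for type I, a regular chain $\bigcup_{\mu<\xi}\sf{D}_\mu$ for type III), and then constructs the correct $\F$-successor operator for those specific representatives. These operators are \emph{not} the $\L$-successor: for $\Lip$ it is $s_\Lip(A) = \bigoplus_n 0^{(n)} \conc A$ (the sup of an $\omega$-chain of $\L$-degrees), and for a regular chain $\vec{\F}$ of type $\seq{\mu_n}{n}$ it is $s_{\vec{\F}}(A) = \bigoplus_n \Sigma^{\mu_n}(A)$. The proof that these really are successors (Proposition \ref{propLipsuccessor} and Theorem \ref{theorsuccessor}) is where the actual work lies, and is the content of Section \ref{sectionLipandchains}.
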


The proof of this theorem can be obtained by choosing some ``canonical''
representative for each equivalence class induced by the equivalence
relation $\simeq$ on $\sf{GR}$, and by studying
the degree-structure induced by it. These examples are, respectively:
$\Lip$  
for the collection of the good Borel reducibilities of
type I, $\sf{D}_\xi$ or $\Bor$ for the $\F$'s of type II such that $\Delta_\F =
\bDelta^0_\xi$ (for each $\xi \leq \omega_1$), and the chain of
reductions $\bigcup_{\mu < \xi} \sf{D}_\mu$ for the $\F$'s of type III
such that $\Delta_\F = \bDelta^0_{<\xi}$ (for every countable limit
$\xi$). The degree-structures of $\Bor$ and $\sf{D}_\xi$ have already been 
determined in \cite{mottorosborelamenability}, while the
degree-structures of $\Lip$ and of the chains of reductions will be
determined in the next section of this paper (one can
check that all these results are 
coherent with the description given in Theorem \ref{theordic}). Therefore it
will be  enough to apply Theorem
\ref{theorequivalence}, with $\G$ being the suitable ``canonical''
representative (i.e.\ the canonical example such that $\Delta_\F=
\Delta_\G$), to get the result for an arbitrary good Borel 
reducibility $\F$.

\section{Good Borel reducibilities of 
type I and III} 
\label{sectionLipandchains}

In this section we will analyze the degree-structures induced by $\Lip$ and by (regular) chains of reductions, showing in particular that they are all Lipschitz-like. This will complete the proof of Theorem \ref{theordic}.

\subsection{Lipschitz functions}

First we want to prove that $\Lip$ is a \emph{good} Borel reducibility of \emph{type} I, and this  practically amounts to compute that 
\[ \Delta_\Lip = \bigcup_{0 \neq n \in \omega} [\bN_{0^{(n)}}]_\L \cup
\{\emptyset,\RR\} = \bigcup_{s \in \seqo} [\bN_s]_\L \cup \{ \emptyset \}. \]
One direction is obvious, so we will just prove $\Delta_\Lip \subseteq \bigcup_{0 \neq n \in \omega} [\bN_{0^{(n)}}]_\L \cup
\{\emptyset,\RR\}$. Let $\emptyset \neq A \in \Delta_\Lip$: by definition there are $f \in \Lip$ and $n \in \omega$ such that $f \in \Lip(2^n)$ and $f^{-1}(\bN_{\langle 0 \rangle }) = A$. 
We want to show that $S  = \{ s \in {}^{n+1} \omega \mid f(\bN_s) \subseteq \bN_{\langle 0 \rangle } \}$ is 
such that $A = \bigcup_{s \in S} \bN_s$: since the set on the right of the equation is clearly $\L$-reducible to $\bN_{0^{(n+1)}}$, this will finish the proof. Clearly $\bigcup_{s \in S} \bN_s \subseteq A$. For the other direction, pick any $x \in A$: being $f$ a reduction of $A$ into $\bN_{\langle 0 \rangle }$, $f(x) \in \bN_{\langle 0 \rangle }$. Since $f \in \Lip(2^n)$, $d(f(x),f(y)) \leq 2^{-1}$ for every $y \in \bN_{\restr{x}{(n+1)}}$, which means $f(\bN_{\restr{x}{(n+1)}}) \subseteq \bN_{\langle 0 \rangle }$: but then $\restr{x}{(n+1)} \in S$ and hence $x \in \bigcup_{s \in S} \bN_s$.

Since we have just proved that $\Lip \in \sf{GR}$, to determine the degree-structure induced by $\Lip$ we have only to understand what happens after a selfdual degree. Given any set $A \subseteq
\RR$  define
\[ s_\Lip(A) = \bigoplus\nolimits_n 0^{(n)} \conc A.\]
We want to prove that if $A \leq_\Lip \neg A$ then $[s_\Lip(A)]_\Lip$
is selfdual and is the immediate successor of
$[A]_\Lip$. This  will prove
that after a selfdual $\Lip$-degree there is always another selfdual
$\Lip$-degree, and that $\Lip$ induce a degree-structure which is Lipschitz-like.

\begin{proposition}\label{propLipsuccessor}
Let $A \subseteq \RR$ be $\Lip$-selfdual. Then $s_\Lip(A) \leq_\Lip \neg s_\Lip(A)$,
$A <_\Lip s_\Lip(A)$ and there is no $B$ such that $A <_\Lip B <_\Lip s_\Lip(A)$.
\end{proposition}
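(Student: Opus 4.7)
The plan is to prove three claims in sequence: (i) $s_\Lip(A) \leq_\Lip \neg s_\Lip(A)$, (ii) $A <_\Lip s_\Lip(A)$, and (iii) no $B$ lies strictly between $A$ and $s_\Lip(A)$ in $\leq_\Lip$.

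For (i), I fix a $\Lip$-reduction $g$ of $A$ into $\neg A$ and a point $z_0 \in A$, and for each $n$ I let $C_n = \{y : y(0) = n \text{ and } y(i) = 0 \text{ for all } 1 \leq i \leq n\}$. I define $f(y) = y(0) \conc 0^{(y(0))} \conc w(y)$, where $w(y) = g(\sigma^{y(0)+1}(y))$ if $y \in C_{y(0)}$ and $w(y) = z_0$ otherwise. A positional analysis will show $f$ has the same Lipschitz constant as $g$: at positions $k \leq y(0)$ the output is $y(0)$ or $0$ and depends only on $y(0)$ (which we already have at position $0$), while at positions $k > y(0)$ the output inherits $g$'s bounded look-ahead since the component check is completed by position $y(0)$. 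Since $f(y) \in C_{y(0)}$ by design, a short case analysis on whether $y$ belongs to its component and whether its tail is in $A$ then yields $y \in s_\Lip(A) \iff f(y) \notin s_\Lip(A)$.

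For (iii), assume toward contradiction that $A <_\Lip B <_\Lip s_\Lip(A)$ for some $B$. I take the partition $D_n = \bN_{\langle n \rangle}$, which is $\Delta_\Lip$-bounded since each $D_n$ is $\L$-equivalent to $\bN_{\langle 0 \rangle} \in \Delta_\Lip$. A direct computation gives $s_\Lip(A) \cap D_n = n \conc 0^{(n)} \conc A$, and this set is $\Lip$-equivalent to $A$: the upward reduction is the $\L$-contraction $x \mapsto n \conc 0^{(n)} \conc x$, and the downward one sends $y$ to $\sigma^{n+1}(y)$ on inputs matching the prefix and to a fixed value $\notin A$ otherwise, which is Lipschitz with constant $2^{n+1}$. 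Hence $s_\Lip(A) \cap D_n \equiv_\Lip A <_\Lip B$ for every $n$, so by Lemma \ref{lemmabasic} we obtain $s_\Lip(A) \cap D_n <_\L B$ and in particular $\leq_\L B$. Condition \eqref{eqstar} applied to $s_\Lip(A)$, the partition $\seq{D_n}{n \in \omega}$, and $B$ then yields $s_\Lip(A) \leq_\Lip B$; combined with $B \leq_\Lip s_\Lip(A)$ this gives $B \equiv_\Lip s_\Lip(A)$, contradicting $B <_\Lip s_\Lip(A)$.

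For (ii), the reduction $A \leq_\Lip s_\Lip(A)$ via $x \mapsto 0 \conc x$ is immediate. To show strictness I suppose that some $h \in \Lip$ of Lipschitz constant $2^m$ reduces $s_\Lip(A)$ to $A$, and I analyze its composition with the embeddings $e_n \colon x \mapsto n \conc 0^{(n)} \conc x$: each $h \circ e_n$ reduces $A$ to $A$, and since its first $n - m + 1$ output positions depend only on the fixed prefix of $e_n(x)$, I can decompose $h \circ e_n = p_n \conc q_n$ with $|p_n| \to \infty$ and $q_n \in \L$. Writing $A_p := \{z : p \conc z \in A\}$, the maps $q_n$ and $z \mapsto p_n \conc z$ together give $A \equiv_\L A_{p_n}$ for prefixes of arbitrarily large length. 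The hard part will be to extract a contradiction from this unbounded $\L$-self-similarity of $A$: a Martin-Monk-style flip-set construction, in the spirit of the proof of Proposition \ref{propSDPforminimal}, using iterated applications of the $q_n$'s interleaved with the $\Lip$-witness of $A \equiv_\Lip \neg A$, will produce a flip-set $\L$-reducible to $A$, contradicting the nontriviality of $A$ (which is forced because any nontrivial $\Lip$-selfdual set lies outside $\Delta_\Lip$).
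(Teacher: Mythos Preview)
Your arguments for (i) and (iii) are correct and interestingly different from the paper's. For (i) the paper does not build an explicit reduction: it first replaces $A$ by an $\L$-minimal element of $[A]_\Lip$, invokes Proposition~\ref{propequivSDP} to get $A \leq_\L \neg A$, observes that the sequence $0^{(n)}\conc A$ is then strictly $\L$-increasing, and concludes $s_\Lip(A)$ is $\L$-selfdual by Theorem~\ref{theorgeneralproperties}(v). For (iii) the paper uses Lemma~\ref{lemmabasic} directly on $B <_\Lip s_\Lip(A)$ to get $B <_\L s_\Lip(A)$, hence $B \leq_\L 0^{(n)}\conc A$ for some $n$, hence $B \leq_\Lip A$; your route via condition~\eqref{eqstar} works too but is less direct.

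The real problem is (ii). Your Martin--Monk sketch is far too vague and, more importantly, you are missing the key simplification that makes the whole proposition short: pass at the outset to an $\L$-minimal $A$ in its $\Lip$-degree. By Proposition~\ref{propequivSDP} this gives $A \leq_\L \neg A$, and then the standard fact $\loc{A}{k} <_\L A$ combined with $\L$-minimality yields $\loc{A}{k} <_\Lip A$. Now if $f \in \Lip(2^n)$ reduced $s_\Lip(A)$ to $A$, then $f$ maps $\bN_{0^{(n+1)}}$ into some $\bN_{\langle k \rangle}$, giving $0^{(n+1)}\conc A \leq_\Lip A \cap \bN_{\langle k \rangle} \leq_\Lip \loc{A}{k} <_\Lip A$; but $0^{(n+1)}\conc A \equiv_\Lip A$, a contradiction in two lines. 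Your proposed flip-set construction, by contrast, would have to interleave the $q_n \in \L$ with the $\Lip$-witness $g$ of selfduality, and since $g$ may be genuinely expansive you would need to choose the indices $n_k$ growing fast enough to absorb the accumulated expansion and force convergence of the iterates --- none of which you address. (Also, the contradiction you reach is with $\NFS$, not with ``nontriviality of $A$''.) The argument may be salvageable, but as written it is a gap, and it duplicates work already packaged in Proposition~\ref{propequivSDP}.
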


\begin{proof}
Let $A$ be $\L$-minimal in its $\Lip$-degree and observe that one has
$A \leq_\L\neg A$ by Proposition \ref{propequivSDP} (note that obviously  $\Lip(2) \subseteq \Lip$). This implies that $A <_\L 0 \conc A
<_\L \dotsc <_\L 0^{(n)} \conc A <_\L \dotsc$, and hence that $s_\Lip(A) \leq_\L \neg
s_\Lip(A)$. Moreover it is clear that for every $n \in \omega$ we have $A \leq_\L
0^{(n)} \conc A$ and that $0^{(n)} \conc A \leq_\Lip A$ via a function
$f \in \Lip(2^n)$. If $B <_\Lip s_\Lip(A)$ we have that $B <_\L s_\Lip(A)$ by
Lemma \ref{lemmabasic}, which in turn implies $B \leq_\L 0^{(n)} \conc A$ for
some $n \in \omega$: hence $B \leq_\Lip A$. Therefore it remains
only to prove that $s_\Lip(A) \nleq_\Lip A$. Toward a contradiction, assume that
there is $f \in \Lip$ such that $f^{-1}(A) = s_\Lip(A)$, and let $n$ be the
smallest natural number
such that $f \in \Lip (2^n)$, so that $f(\bN_{0^{(n+1)}}) \subseteq
\bN_{\langle k \rangle}$ for some $k \in \omega$. Let $g$ be defined by $g(x)
= f(x)$ if $x \in \bN_{0^{(n+1)}}$ and $g(x) = (k+1) \conc \vec{0}$ otherwise:
then $g \in \Lip(2^{n+1})$ and reduces $0^{(n+1)} \conc A$
to $A \cap \bN_{\langle k \rangle}$. But it is easy to check that $A \cap
\bN_{\langle k \rangle} \leq_\Lip \loc{A}{k}$ and 
$\loc{A}{k} <_\L A$: therefore, by $\L$-minimality of $A$ in its $\Lip$-degree
we would have that
\[ 0^{(n+1)} \conc A \leq_\Lip \loc{A}{k} <_\Lip A,\]
a contradiction!
\end{proof}

The definition of the successor
 operator $s_\Lip$,
allows also to obtain a way to construct the $\Lip$-degrees from the
$\L$-degrees. In fact, if
$[A]_\Lip$ is nonselfdual, then $[A]_\Lip = [A]_\L$ by Theorem
\ref{theorgeneralproperties}, while if $A \leq_\L \neg A$
and $A$ is $\L$-minimal in its $\Lip$-degree, then $[A]_\Lip$ is exactly
$\bigcup_{n \in \omega} [0^{(n)} \conc A]_\L$.\\

As an application of Theorem \ref{theorequivalence}, let us now consider the set of the uniformly continuous functions
(which will be denoted by $\sf{UCont}$):  it turns out (perhaps rather
surprisingly, since uniform
continuity is just a weak ``refinement'' of continuity) that
$\sf{UCont}$ is equivalent to $\Lip$ (rather than to $\W$),
and thus gives a hierarchy of degrees which is Lipschitz-like.
In fact, one can easily check that $\sf{UCont}$ is a good Borel
reducibility and that $\Delta_{\sf{UCont}} = \bigcup_{s \in
{}^{<\omega}\omega}[\bN_s]_\L \cup \{ \emptyset \}$: $\bN_s$ is reducible
to $\bN_{\langle 0
\rangle}$ via a function in $\Lip(2^{\leng(s)}) \subseteq
\sf{UCont}$,  while if
$f$ is uniformly continuous then there must be some $m \in \omega$ such that
for every $x,y \in \RR$
\[ d(x,y) \leq 2^{-m} \imp d(f(x),f(y)) \leq 2^{-1},\]
and thus, in particular, $f$ can not reduce $\bigoplus_n
\bN_{0^{(n)} }$ to $\bN_{\langle 0 \rangle}$ (the argument is similar
to the one used in Proposition \ref{propLipsuccessor}). This proves also that
$\Delta_\Lip = \Delta_{\sf{UCont}}$, and that $\sf{UCont}$ is of type
I: therefore $\Lip \simeq \sf{UCont}$ by Theorem
\ref{theorequivalence}. Moreover it is not hard to check that
$\sf{UCont}$ is maximal among the good Borel reducibilities
of type I, since the fact that $\F$ is of type I and satisfies the
\textbf{$\sigma$-BC} implies $\F \subseteq \sf{UCont}$ --- $\sf{UCont}$ is
exactly the collection of all $\sigma$-bounded $\Delta_\Lip$-functions.

\subsection{Chains of reductions}

A (countable) \emph{chain of (Borel-amenable sets of)
reductions} is simply any sequence $\vec{\F}
= \langle
\F_n\mid n \in \omega \rangle$ of Borel-amenable sets of
reductions. 
To each chain of reductions
we will associate the unique sequence of ordinals $\langle\mu_n\mid n
\in \omega \rangle$
such that $1 \leq \mu_n \leq \omega_1$ and $\Delta_{\F_n} =
\bDelta^0_{\mu_n}$  for every $n \in \omega$, which will
be called the \emph{type of $\vec{\F}$}.
Moreover we will say that
$\vec{\F}$ is of \emph{rank}
$\omega_1$ if $\mu_n = \omega_1$ for some $n \in
\omega$, and of \emph{rank} $1 \leq \xi < \omega_1$ if $\mu_n < \omega_1$
for every
$n \in \omega$ and $\xi =
\sup\{\mu_n+1\mid n \in \omega \}$.
A chain of reductions will be called \emph{regular} if each $\F_n$ is saturated and $\F_n \subsetneq \F_{n+1}$ for every $n$ (in particular, the rank $\xi$ of $\vec{\F}$ must be countable and limit\footnote{It is easy to check that each chain of reductions is equivalent either to a Borel-amenable set of reductions (if it has successor or uncountable rank) or to a regular chain of reductions with the same rank.}). Note that
in this case
$\bigcup_n \F_n = \bigcup_{\mu<\xi} \sf{D}_\mu$ is a Borel set of reductions, and since
\[ \Delta_{\bigcup_n \F_n} = \bigcup\nolimits_n \Delta_{\F_n} = \bigcup\nolimits_n \bDelta^0_{\mu_n} = \bDelta^0_{<\xi} \]
one can check that $\bigcup_n \F_n$ is good and of type III: thus, as we have already pointed out, regular chains of reductions
provide a canonical way to construct good Borel
reducibilities of type III (one for each possible characteristic
set). 
From now onward, we will fix some limit $\xi < \omega_1$ and consider a regular chain of reductions $\vec{\F} = \seq{\F_n}{n \in \omega}$ of rank $\xi$.
By Corollary \ref{coruncountable},
in order to describe the structure
of degrees induced by\footnote{For simplicity of notation, from now on we will systematically identify $\vec{\F}$ with $\bigcup_n \F_n$ when there is no possibility of misunderstanding.} $\leq_{\vec{\F}}$  we have only to determine what happens
after a selfdual degree:
this can be done using
the following proposition about Borel-amenable sets of reductions.

\begin{proposition}\label{propsuccessor}
Let $\G$ and $\G'$ be two Borel-amenable sets of reductions such that
$\Delta_\G
\subsetneq \Delta_{\G'}$ (i.e.\ such that $\G$ is of level strictly smaller
than
$\G'$). Let $A \leq_\G \neg A$ and $B$ be a (nonselfdual) successor
of $A$ with respect to $\leq_\G$: then $B \leq_{\G'} A$.
In particular, if $\mu$ is the level of $\G$ and $A \leq_\G \neg A$, then
$\Sigma^\mu(A) \leq_{\G'} A$ and $\Pi^\mu(A) \leq_{\G'} A$.
\end{proposition}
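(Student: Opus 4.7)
The plan is to reduce the general statement to the ``in particular'' clause, and then construct explicit $\G'$-reductions of $\Sigma^\mu(A)$ and $\Pi^\mu(A)$ into $A$ by decomposing along a $\Delta_{\G'}$-partition of $\RR$ induced by the test sets $P_n$ used in the definition of the successor operators. Since $\G$ is Borel-amenable of level $\mu$, the description of the Wadge-like $\G$-hierarchy recalled at the end of Section \ref{sectionpreliminaries} tells us that $\Sigma^\mu(A)$ and $\Pi^\mu(A)$ are the two $\leq_\G$-incomparable immediate successors of $A$. Consequently any nonselfdual successor $B$ of $A$ in $\leq_\G$ is $\G$-equivalent (and hence, since $\G \subseteq \G'$, also $\G'$-equivalent) to one of these two sets, so it is enough to verify the ``in particular'' part.

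To this end, for each $n \in \omega$ let
\[
E_n = \{ x \in \RR \mid \pi_{2n}(x) \in P_n \wedge \forall i < n\,(\pi_{2i}(x) \notin P_i)\}
\]
and put $E_\infty = R_\mu$, so that $\seq{E_n}{n \in \omega} \cup \{E_\infty\}$ is a partition of $\RR$. The strict inclusion $\Delta_\G \subsetneq \Delta_{\G'}$ forces the level $\mu'$ of $\G'$ to satisfy $\mu' \geq \mu + 1$, whence $\bPi^0_\mu \subseteq \bDelta^0_{\mu+1} \subseteq \Delta_{\G'}$. Each $\pi_{2i}^{-1}(P_i)$ belongs to $\bPi^0_{\mu_i} \subseteq \bDelta^0_\mu$ by continuity of the projections and the fact that $\mu_i < \mu$ for every $i$; therefore each $E_n$, being a finite Boolean combination of such sets, lies in $\bDelta^0_\mu \subseteq \Delta_{\G'}$, while $E_\infty \in \bPi^0_\mu \subseteq \Delta_{\G'}$. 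Hence $\seq{E_n}{n \in \omega} \cup \{E_\infty\}$ is a genuine $\Delta_{\G'}$-partition, and the Borel-amenability of $\G'$ allows us to glue along it any collection of local reductions coming from $\L$.

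The local reductions are immediate from the very shape of $\Sigma^\mu(A)$ and $\Pi^\mu(A)$: on each $E_n$ the continuous projection $\pi_{2n+1} \in \Lip \subseteq \G'$ witnesses both $\Sigma^\mu(A) \cap E_n \leq_\L A$ and $\Pi^\mu(A) \cap E_n \leq_\L A$, because for $x \in E_n$ membership in $\Sigma^\mu(A)$ (equivalently, in $\Pi^\mu(A)$) collapses to the condition $\pi_{2n+1}(x) \in A$. On $E_\infty$ we have $\Sigma^\mu(A) \cap E_\infty = \emptyset$ and $\Pi^\mu(A) \cap E_\infty = E_\infty$, so a constant function taking value in $\neg A$ (respectively $A$) --- both nonempty since $A$ is $\G$-selfdual, hence distinct from $\emptyset$ and $\RR$ --- provides the required Lipschitz reduction. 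Gluing these Lipschitz functions along the $\Delta_{\G'}$-partition produces an element of $\G'$ reducing $\Sigma^\mu(A)$ (respectively $\Pi^\mu(A)$) to $A$. The only nontrivial verification is that $\seq{E_n}{n \in \omega} \cup \{E_\infty\}$ lives inside $\Delta_{\G'}$, and this is precisely where the hypothesis $\Delta_\G \subsetneq \Delta_{\G'}$ is used in an essential way.
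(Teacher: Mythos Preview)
Your proof is essentially the same as the paper's: both reduce the general statement to the ``in particular'' clause, partition $\RR$ into the sets $E_n$ (the paper calls them $F_n$) together with $R_\mu$, verify that this partition lies in $\Delta_{\G'}$ thanks to $\Delta_\G \subsetneq \Delta_{\G'}$, and glue the continuous projections $\pi_{2n+1}$ and a constant function. One small inaccuracy: you write that $\G$-equivalence implies $\G'$-equivalence ``since $\G \subseteq \G'$'', but the inclusion $\G \subseteq \G'$ of the underlying sets of functions is \emph{not} guaranteed by $\Delta_\G \subsetneq \Delta_{\G'}$ (Borel-amenable sets of reductions with the same characteristic set need not be comparable under inclusion, and a fortiori neither need ones with different characteristic sets); the correct justification --- used in the paper --- is that $\Delta_\G \subseteq \Delta_{\G'}$ implies $A \leq_\G B \Rightarrow A \leq_{\G'} B$ via Theorem~\ref{theorequivalence} (equivalently, the analogous result for Borel-amenable reducibilities recalled on page~\pageref{equivalentreductions}).
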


\begin{proof}
If $\G$ and $B$ are as above
then either $B \equiv_\G \Sigma^\mu(A)$ or $B \equiv_\G \Pi^\mu(A) \equiv_\G \neg \Sigma^\mu(A)$,
and since $\Delta_\G \subseteq \Delta_{\G'}$ implies $A \leq_\G B \imp
A \leq_{\G'} B$ for every $A,B \subseteq \RR$ (by Theorem \ref{theorequivalence}), it is enough to prove
$\Sigma^\mu(A) \leq_{\G'} A$. Let $P_n$
and $R_\mu$ be the sets used to define the operation $\Sigma^\mu$,
and define $F_0 = \{ x\in \RR\mid  \pi_0(x) \in P_0\}$ and
$F_{n+1} = \{x\in \RR\mid \pi_{2(n+1)}(x) \in P_{n+1} \wedge \forall
i \leq n (\pi_{2i}(x) \notin P_i)\}$. Clearly, every $F_n \in
\bDelta^0_\mu \subseteq \Delta_{\G'}$, and since $R_\mu \in \bPi^0_\mu$
and $\bDelta^0_\mu \subsetneq \Delta_{\G'}$, we have also $R_\mu \in
\Delta_{\G'}$ (as $\bSigma^0_\mu \cup \bPi^0_\mu \subseteq \Delta_{\G'}$ by Borel-determinacy). On each of these sets we can continuously reduce
$\Sigma^\mu(A)$ to $A$ using $\pi_{2n+1}$ on the $F_n$'s and a
constant function with value $\bar{y} \notin A$ on $R_\mu$ ($A \neq \RR$ as $A \leq_\G \neg A$), hence $\Sigma^\mu(A)
\leq_{\sf{D}^\W_{\mu'}} A$, where $\mu'$ is the level of $\G'$. But
since $\G' \simeq \sf{D}^\W_{\mu'}$ we are done.
\end{proof}

\begin{theorem}\label{theorsuccessor}
  If $A \leq_{\vec{\F}} \neg A$ then there is some $B \leq_{\vec{\F}} \neg B$
  with the property that  $A <_{\vec{\F}} B$ and there is no $C$ such that $A <_{\vec{F}} C <_{\vec{\F}} B$.
Thus after an $\vec{\F}$-selfdual degree there is another $\vec{\F}$-selfdual degree.
\end{theorem}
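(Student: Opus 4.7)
The plan is to imitate the construction of the Lipschitz successor operator from Proposition~\ref{propLipsuccessor}, with the $\F_n$-successor pair $\{\Sigma^{\mu_n}(A), \Pi^{\mu_n}(A)\}$ playing the role of the shifts $0^{(n)} \conc A$. The case $A \in \Delta_{\vec{\F}}$ is trivial, so assume $A \notin \Delta_{\vec{\F}}$; then Proposition~\ref{propequivSDP} — whose technical hypothesis holds since $\Lip(2) \subseteq \vec{\F}$ — lets me replace $A$ by the $\L$-minimum of $[A]_{\vec{\F}}$, so that $A \leq_\L \neg A$. Let $n_0$ be the least $n$ with $A \leq_{\F_n} \neg A$; for $n \geq n_0$ set $A_n := \Sigma^{\mu_n}(A)$ and $\bar A_n := \Pi^{\mu_n}(A)$, the $\F_n$-immediate successors of $[A]_{\F_n}$. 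Proposition~\ref{propsuccessor} applied to $\F_n \subsetneq \F_{n+1}$ gives $A_n, \bar A_n \leq_{\F_{n+1}} A$, and combined with $A \leq_\L A_n, \bar A_n$ this yields $A_n \equiv_{\vec{\F}} A \equiv_{\vec{\F}} \bar A_n$.

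I then define, through a bijection $\omega \to \{(n,i) : n \geq n_0,\ i<2\}$,
\[
B = \bigoplus\nolimits_{n,i} C_{n,i}, \qquad C_{n,0}=A_n,\ C_{n,1}=\bar A_n.
\]
Selfduality will be automatic: the involution $(n,i) \mapsto (n,1-i)$ lifts to a Lipschitz-$1$ isometry of $\RR$ interchanging $B$ and $\neg B$, so $B \equiv_\L \neg B$ and in particular $B \leq_{\vec{\F}} \neg B$. The key technical tool will be a \emph{slice criterion}: for each Borel-amenable $\F_k$ and each $X \subseteq \RR$, $B \leq_{\F_k} X$ iff $C_{n,i} \leq_{\F_k} X$ for every $(n,i)$. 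The ``only if'' direction comes from composing with the Lipschitz inclusions into the clopen slices $\langle n,i\rangle \conc \RR$; the ``if'' direction uses Borel-amenability of $\F_k$ to glue slicewise $\F_k$-reductions along this (evidently $\Delta_{\F_k}$) partition into a single $\F_k$-map.

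The slice criterion yields $A <_{\vec{\F}} B$ at once: clearly $A \leq_\L A_{n_0} \leq_\L B$, and $B \leq_{\F_k} A$ would force $A_n \leq_{\F_k} A$ for every $n \geq n_0$; choosing any $n \geq \max(n_0,k)$ and using $\F_k \subseteq \F_n$ one obtains $A_n \leq_{\F_n} A$, contradicting $A <_{\F_n} A_n$. The heart of the proof is immediacy. Assuming toward a contradiction that $A <_{\vec{\F}} C <_{\vec{\F}} B$, Lemma~\ref{lemmabasic} applied to $\F_{n_0} \subseteq \vec{\F}$ gives $C <_{\F_{n_0}} B$, so $B \nleq_{\F_{n_0}} C$ and the slice criterion furnishes some $(n,i)$ with $C_{n,i} \nleq_{\F_{n_0}} C$; then $\SLO^{\F_{n_0}}$ gives $\neg C \leq_{\F_{n_0}} C_{n,i}$, and composing with $C_{n,i} \leq_{\F_{n+1}} A$ yields $\neg C \leq_{\F_{\max(n_0,n+1)}} A$, i.e.\ $\neg C \leq_{\vec{\F}} A$. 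Selfduality of $A$ then forces $C \leq_{\vec{\F}} A$, contradicting $A <_{\vec{\F}} C$. The main obstacle is choosing $B$ so that the slice criterion is available and genuinely strong; once $B = \bigoplus_{n,i} C_{n,i}$ is in place, Proposition~\ref{propsuccessor} together with Lemma~\ref{lemmabasic} does the rest.
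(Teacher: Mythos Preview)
Your approach is essentially the paper's: build $B$ as a countable direct sum of the level-$n$ successors of $A$ and use Proposition~\ref{propsuccessor} to pull each summand back below $A$. The paper uses only the $\Sigma^{\mu_n}(A)$'s, obtains selfduality from the strict $\L$-chain $\Sigma^{\mu_n}(A) <_\L \Sigma^{\mu_{n+1}}(A)$ (via Lemma~\ref{lemmabasic} and Proposition~\ref{propsuccessor}), and for immediacy passes directly to $\leq_\L$ by Lemma~\ref{lemmabasic}, using that $C <_\L \bigoplus_n \Sigma^{\mu_n}(A)$ forces $C \leq_\L \Sigma^{\mu_n}(A)$ for some $n$; your slice-criterion/$\SLO$ route is a harmless variant of this.

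One small slip in your selfduality step: with $\bar A_n = \Pi^{\mu_n}(A)$ the involution $(n,i)\mapsto(n,1-i)$ does \emph{not} literally interchange $B$ and $\neg B$, because $\neg \Sigma^{\mu_n}(A) \neq \Pi^{\mu_n}(A)$ in general --- they are only $\L$-equivalent (as members of the same nonselfdual pair, cf.\ Theorem~\ref{theorgeneralproperties}(vi)). Either redefine $\bar A_n = \neg \Sigma^{\mu_n}(A)$ so that $\neg C_{n,i} = C_{n,1-i}$ and the isometry argument becomes literal, or keep $\Pi^{\mu_n}(A)$ and combine the involution with slicewise $\L$-reductions witnessing $\neg C_{n,i} \equiv_\L C_{n,1-i}$ to obtain $B \leq_\L \neg B$. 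Also note that after replacing $A$ by the $\L$-minimum of its $\vec{\F}$-degree you already have $A \leq_\L \neg A$, hence $n_0 = 0$, so the $n_0$-bookkeeping can be dropped.
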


\begin{proof}
Taking $A$ to be $\L$-minimal in $[A]_{\vec{\F}}$, by Proposition
\ref{propequivSDP} and the fact that $\vec{\F}$ has the \textbf{SDP} we can
assume $A \leq_\L\neg A$ (hence, in
particular, $A \leq_{\F_n} \neg A$ for every $n \in \omega$).
Let $\seq{\mu_n}{n \in \omega}$ be the type of $\vec{\F}$ and
define the \emph{successor operator} $s_{\vec{\F}}$ by letting
\[ B = s_{\vec{\F}}(A) = \bigoplus\nolimits_n \Sigma^{\mu_n}(A).\]
Clearly $A \leq_\L s_{\vec{\F}}(A)$, and if $C<_{\vec{\F}} s_{\vec{\F}}(A)$ then we have also $C <_\L s_{\vec{\F}}(A)$,
which implies $C \leq_\L \Sigma^{\mu_n}(A)$ for some $n \in \omega$: but
since $\Sigma^{\mu_n}(A) \leq_{\F_{n+1}} A$ by Proposition
\ref{propsuccessor}, we have also $C \leq_{\vec{\F}} A$. Finally, the fact
 that $s_{\vec{\F}}(A) \leq_{\vec{\F}} \neg s_{\vec{\F}}(A)$ will follow from  the fact
that $\Sigma^{\mu_n}(A) <_\L \Sigma^{\mu_{n+1}}(A)$ for every $n \in
\omega$  (since  this implies $s_{\vec{\F}}(A)\leq_\L \neg s_{\vec{\F}}(A)$). To see this, recall
that $\Sigma^{\mu_n}(A) \leq_{\F_{n+1}} A$ while $A <_{\F_{n+1}}
\Sigma^{\mu_{n+1}}(A)$, which implies $\Sigma^{\mu_n}(A)
<_{\F_{n+1}} \Sigma^{\mu_{n+1}}(A)$: hence $\Sigma^{\mu_n}(A) <_\L
\Sigma^{\mu_{n+1}}(A)$ by Lemma \ref{lemmabasic}.
\end{proof}

In particular, Theorem \ref{theorgeneralproperties}, Corollary
\ref{coruncountable} and Theorem \ref{theorsuccessor} implies
that the degree-structure induced by any regular chain of reductions
$\vec{\F}$ (i.e.\ by the preorder $\leq_{\vec{\F}}$) is Lipschitz-like.

\section{Baire reductions}\label{sectionBaire}

Let $\B_\alpha$ (for $\alpha<\omega_1$) denote the set of all Baire
class $\alpha$ functions from $\RR$ into itself,
i.e.\ the set of all functions $f\colon \RR \to \RR$ such that
$f^{-1}(U) \in 
\bSigma^0_{\alpha+1}$ for every open set $U$. Clearly $\sf{D}_1 = \B_0
\subseteq \B_\alpha \subseteq \Bor$ for every $\alpha < \omega_1$, and
$\B_\mu \subseteq \B_\nu$ if and
only if $\mu \leq \nu$. Moreover it is well known that the Baire class $\alpha$
functions provides a stratification of $\Bor$ in $\omega_1$-many levels
which is alternative  to the
one induced by $\bDelta^0_\xi$-functions, thus it is quite natural to try
to study the reducibilities induced by the Baire class functions
(of some level). Unfortunately, if $\alpha \neq 0$ then $\B_\alpha$
is not a set of reductions as it is not closed under composition:
in fact, it is easy to check that
if $f \in \B_\mu$ and $g \in \B_\nu$ then $g \circ f \in
\B_{\mu+\nu}$ and, moreover, there are such an $f$ and $g$ for which $g \circ f
\notin \B_\eta$ for any $\eta < \mu+\nu$. Nevertheless, we can
exactly compute the closure under composition of $\B_\alpha$ 
by reversing the previous composition
law, i.e.\ by showing that if $h \in \B_{\mu+\nu}$ then there are $f \in
\B_\mu$ and $g \in \B_\nu$ such that $h = g \circ f$.
To obtain this computation we will use Lemma \ref{lemmarel}  together
with the following crucial fact (for simplicity of notation
we will put $\bDelta^0_0 = \bDelta^0_1$).

\begin{lemma}[$\ZF+\ACOR$]
For every nonzero $\mu,\nu < \omega_1$ with $\nu > 1$ and every $\mathcal{C} =
\{ C_n \mid n \in
\omega\} \subseteq \bDelta^0_{\mu+\nu}$ there is  $\mathcal{B}  = \{ B_m
\mid m \in \omega \} \subseteq \bDelta^0_{\mu+1}$ such that $\mathcal{C} \subseteq \bDelta^0_\nu(\tau')$ for every topology
$\tau'$ for which $\mathcal{B} \subseteq \bDelta^0_1(\tau')$.
\end{lemma}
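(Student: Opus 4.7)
The plan is to prove, by transfinite induction on $1 \leq \alpha < \omega_1$, the following strengthening: for every countable family $\mathcal{A} \subseteq \bSigma^0_{\mu+\alpha}$ there is a countable $\mathcal{B} \subseteq \bDelta^0_{\mu+1}$ such that $\mathcal{A} \subseteq \bSigma^0_\alpha(\tau')$ for every topology $\tau'$ with $\mathcal{B} \subseteq \bDelta^0_1(\tau')$ (the version for $\bPi^0_{\mu+\alpha}$ follows by complementation). Granted this, the lemma is obtained by applying the strengthening separately to $\{C_n \mid n \in \omega\}$ and to $\{\neg C_n \mid n \in \omega\}$ at $\alpha = \nu$ and taking the union of the two resulting witnesses: in any $\tau'$ realising this union as clopen, each $C_n$ lies in $\bSigma^0_\nu(\tau') \cap \bPi^0_\nu(\tau') = \bDelta^0_\nu(\tau')$, and the hypothesis $\nu > 1$ is used precisely to make sure $\bSigma^0_\nu(\tau') \cap \bPi^0_\nu(\tau')$ equals $\bDelta^0_\nu(\tau')$ rather than just the clopen sets.

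For the base case $\alpha = 1$, decompose each $A \in \mathcal{A} \subseteq \bSigma^0_{\mu+1}$ as $A = \bigcup_i P^A_i$ with $P^A_i \in \bPi^0_\gamma$ for some $1 \leq \gamma \leq \mu$; since $\bPi^0_\gamma \subseteq \bDelta^0_{\mu+1}$ in that range, the countable collection $\mathcal{B} = \{P^A_i \mid A \in \mathcal{A}, i \in \omega\}$ (whose existence uses $\ACOR$ to pick a decomposition for every $A$ simultaneously) is a subfamily of $\bDelta^0_{\mu+1}$, and in any $\tau'$ making $\mathcal{B}$ clopen each $P^A_i$, and hence each $A$, is $\tau'$-open.

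For the inductive step at $\alpha > 1$, for every $A \in \mathcal{A}$ write $A = \bigcup_i P^A_i$ with $P^A_i \in \bPi^0_{\gamma^A_i}$ and $\gamma^A_i < \mu+\alpha$. For each pair $(A,i)$ either $\gamma^A_i \leq \mu$, in which case $P^A_i \in \bDelta^0_{\mu+1}$ is thrown directly into $\mathcal{B}$, or $\gamma^A_i = \mu + \beta^A_i$ for a uniquely determined $1 \leq \beta^A_i < \alpha$, in which case the $\bPi^0$ form of the inductive hypothesis at the ordinal $\beta^A_i$ yields a countable $\mathcal{B}^A_i \subseteq \bDelta^0_{\mu+1}$ forcing $P^A_i \in \bPi^0_{\beta^A_i}(\tau')$ whenever $\mathcal{B}^A_i$ is $\tau'$-clopen. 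Let $\mathcal{B}$ be the union of all these contributions. In any $\tau'$ making $\mathcal{B}$ clopen each $P^A_i$ lies in $\bPi^0_{\delta^A_i}(\tau')$ for some $\delta^A_i < \alpha$ (namely $\delta^A_i = 1$ in the clopen case or $\delta^A_i = \beta^A_i$ otherwise), and hence $A = \bigcup_i P^A_i$ belongs to $\bSigma^0_\alpha(\tau')$ by the defining clause of the Borel hierarchy (at both successor and limit $\alpha$).

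The main obstacle I foresee is the ordinal bookkeeping: checking carefully that every $\gamma < \mu + \alpha$ either falls below the ``threshold'' $\mu+1$ (so that $\bPi^0_\gamma \subseteq \bDelta^0_{\mu+1}$ and no recursion is needed) or can be uniquely written as $\mu + \beta$ with $1 \leq \beta < \alpha$ (so that the inductive hypothesis applies at an ordinal strictly smaller than $\alpha$). Everything else is routine iteration of the Borel hierarchy together with the obvious remark that, in any topology $\tau'$ for which $\mathcal{B}$ is clopen, countable set operations starting from sets of $\tau'$-complexity $< \alpha$ produce sets in $\bSigma^0_\alpha(\tau')$ or $\bPi^0_\alpha(\tau')$ as appropriate; an extra $\ACOR$ is used in each inductive step to pick the decompositions of all the sets in $\mathcal{A}$ simultaneously.
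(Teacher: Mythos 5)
Your proof is correct, and it takes a genuinely different route from the paper's. The paper proves the $\bDelta^0$ statement directly (first closing $\mathcal{C}$ under complementation), inducting on $\nu$ with separate successor and limit cases; in the successor case $\nu=\eta+1$ it decomposes each $C_n$ \emph{twice}, as a countable union of $\bPi^0_{\mu+\eta}$ sets which are in turn countable intersections of $\bDelta^0_{\mu+\eta}$ sets, and only then applies the inductive hypothesis (or stops, when $\eta=1$). You instead prove the stronger one-sided $\bSigma^0$ statement and obtain the $\bDelta^0$ conclusion at the end by running it on $\mathcal{C}$ and on $\{\neg C_n \mid n\in\omega\}$ and taking the union of the witnesses. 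This lets you decompose only once per step ($\bSigma^0_{\mu+\alpha}$ sets as countable unions of $\bPi^0_{\gamma}$ sets, $\gamma<\mu+\alpha$) and, more importantly, it merges the successor and limit cases into a single uniform inductive step. The trade-off is essentially cosmetic — your version requires slightly more ordinal bookkeeping ($\gamma\le\mu$ versus $\gamma=\mu+\beta$, $1\le\beta<\alpha$), while the paper's requires two levels of unravelling — but your organization is arguably cleaner and your uses of $\ACOR$ (to select decompositions and inductive-hypothesis witnesses uniformly) are of exactly the same kind as the paper's.

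One small side remark: your explanation of why the hypothesis $\nu>1$ matters is off. Since $\bDelta^0_\nu(\tau')$ is \emph{defined} as $\bSigma^0_\nu(\tau')\cap\bPi^0_\nu(\tau')$ for every $\nu\ge 1$ (and for $\nu=1$ this equals the $\tau'$-clopen sets, which is still $\bDelta^0_1(\tau')$), the identity you invoke holds at $\nu=1$ too. Your argument in fact goes through formally for $\nu=1$ as well; the restriction in the statement is there to keep the lemma nontrivial (at $\nu=1$ one could just take $\mathcal{B}=\mathcal{C}$), not because anything in the deduction would fail. This does not affect the correctness of what you prove.
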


\begin{proof}
Clearly we can assume that $\mathcal{C}$ is closed under complementation (if 
not simply adjoin $\neg C_n$ to $\mathcal{C}$ for every $n$). We will
prove the lemma by induction on $\nu$, and the base of the induction
and the successor case will be proved 
together.  Assume $\nu = \eta+1$ (with $\eta\geq 1$): by definition there
must be a collection $\mathcal{D}' = \{ D'_{n,k} \mid n,k \in \omega\}
\subseteq \bPi^0_{\mu+\eta}$ such that $C_n = \bigcup_{k \in \omega} D'_{n,k}$
for every $n$,
and by definition again there must be some $\mathcal{D} = \{ D_{n,k,i} \mid
n,k,i \in \omega \} \subseteq \bDelta^0_{\mu+\eta}$ such that $D'_{n,k} =
\bigcap_{i \in \omega} D_{n,k,i}$ for every $n,k$.
Put $\mathcal{B} = \mathcal{D}$ if $\eta=1$ or, in the other case, use
the inductive 
hypothesis applied to  $\mathcal{D}$ to find  some
countable $\B \subseteq \bDelta^0_{\mu+1}$ such that for every topology $\tau'$
on $\RR$ if $\B \subseteq \bDelta^0_1(\tau')$ then $\mathcal{D} \subseteq
\bDelta^0_{\eta}(\tau')$. In both cases $\mathcal{D}' \subseteq \bPi^0_\eta(\tau')$ and
$\mathcal{C} \subseteq \bDelta^0_{\eta+1}(\tau')$
(by closure under complementation of
$\mathcal{C}$), hence we are done.

Now let $\nu$ be limit and let
$\langle \nu_i \mid i \in \omega \rangle$
be any increasing sequence of ordinals cofinal in $\nu$ such that
$\nu_i > 1$ for every $i$. Since $\mathcal{C}
\subseteq \bDelta^0_{\mu+\nu}$ there must be some $\mathcal{D} = \{
D_{n,k} \mid
n,k \in \omega \} \subseteq \bDelta^0_{<(\mu+\nu)}$ such that $C_n =
\bigcup_{k \in \omega} D_{n,k}$. Put $\mathcal{D}_i = \{ D_{n,k} \in
\mathcal{D} \mid D_{n,k} \in 
\bDelta^0_{\mu + \nu_i}\}$
for every $i$, so that $\mathcal{D} = \bigcup_{i \in \omega} \mathcal{D}_i$.
Applying the inductive hypothesis to each $\mathcal{D}_i$ and using $\ACOR$,
we can find for every $i$ a collection $\B_i \subseteq
\bDelta^0_{\mu+1}$ such that
if $\tau'$ is any topology on $\RR$ for which $\B_i \subseteq
\bDelta^0_1(\tau')$ then $\mathcal{D}_i \subseteq \bDelta^0_{\nu_i}(\tau')$.
Put now $\B = \bigcup_{i \in \omega} \B_i$. Then $\B \subseteq
\bDelta^0_{\mu+1}$ and
if $\tau'$ is such that $\B \subseteq \bDelta^0_1(\tau')$ then $\mathcal{D}
\subseteq \bDelta^0_{<\nu}(\tau')$ and hence $\mathcal{C}
\subseteq \bDelta^0_\nu(\tau')$.
\end{proof}

Recall now the following classical fact: if $X$ is a zero-dimensional Polish
space then there is a closed set $F \subseteq \RR$ and an homeomorphism $H \colon 
F \to X$.

\begin{proposition}[$\ZF+\ACOR$]\label{propBaire}
Let $h \colon  \RR \to \RR$ be in $\B_{\mu+\nu}$ (for some countable ordinals $\mu$
and $\nu$). Then there are $f \in \B_\mu$ and $g \in \B_\nu$ such that
$h = g \circ f$.
\end{proposition}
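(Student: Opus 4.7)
The plan is to factor $h$ by refining the topology on the domain so as to absorb exactly $\mu$ layers of descriptive complexity into the new basis, leaving $h$ as a Baire class $\nu$ map out of the refined space. The trivial cases $\mu = 0$ or $\nu = 0$ are handled immediately: take one of $f,g$ to be the identity and the other to be $h$. So assume $\mu, \nu \geq 1$.

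Fix a countable basis $\{U_n\}_{n\in\omega}$ of $\RR$. Since $h \in \B_{\mu+\nu}$, for every $n$ we have $h^{-1}(U_n) \in \bSigma^0_{\mu+\nu+1}$, so we may write $h^{-1}(U_n) = \bigcup_k D_{n,k}$ with $D_{n,k} \in \bPi^0_{\mu+\nu} \subseteq \bDelta^0_{\mu+\nu+1} = \bDelta^0_{\mu+(\nu+1)}$. Apply the preceding lemma with parameters $\mu$ and $\nu + 1 > 1$ to the countable family $\{D_{n,k}\}$ (closed under complements) to obtain a countable $\mathcal{B} \subseteq \bDelta^0_{\mu+1}$ such that in any topology $\tau'$ on $\RR$ with $\mathcal{B} \subseteq \bDelta^0_1(\tau')$ one has $D_{n,k} \in \bDelta^0_{\nu+1}(\tau')$ for all $n,k$. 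Then apply Lemma \ref{lemmarel} to $\mathcal{B}$ to obtain such a $\tau'$ which is zero-dimensional Polish, refines $\tau$, and admits a countable basis $\mathcal{B}' \subseteq \bDelta^0_{\mu+1}$. By the choice of $\mathcal{B}$, $h^{-1}(U) \in \bSigma^0_{\nu+1}(\tau')$ for every $\tau$-open $U$.

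Next, using the classical fact recalled before the statement, fix a homeomorphism $\phi \colon (\RR,\tau') \to F$ with $F \subseteq \RR$ $\tau$-closed, and define $f \colon \RR \to \RR$ by $f = \phi$ and $g \colon \RR \to \RR$ by
\[
 g(y) = \begin{cases} h(\phi^{-1}(y)) & \text{if } y \in F,\\ \bar{y} & \text{otherwise,}\end{cases}
\]
for some fixed $\bar{y} \in \RR$. Clearly $g \circ f = h$. It remains to verify the Baire classes. For $f$: if $U$ is $\tau$-open, then $f^{-1}(U) = \phi^{-1}(U \cap F)$ is $\tau'$-open, hence a countable union of elements of $\mathcal{B}' \subseteq \bDelta^0_{\mu+1}$, so $f^{-1}(U) \in \bSigma^0_{\mu+1}$ and $f \in \B_\mu$. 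For $g$: the set $g^{-1}(U)$ equals $\phi(h^{-1}(U))$ intersected with $F$, together with $\RR \setminus F$ (if $\bar{y} \in U$) or nothing. Since $\phi$ is a homeomorphism, $\phi(h^{-1}(U))$ lies in $\bSigma^0_{\nu+1}$ of $F$ with its $\tau$-subspace topology; because $F$ is $\tau$-closed and $\nu + 1 \geq 2$, this pushforward lies in $\bSigma^0_{\nu+1}$ of $(\RR,\tau)$, and adjoining the open set $\RR \setminus F$ keeps us in $\bSigma^0_{\nu+1}$. Thus $g \in \B_\nu$.

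The main obstacle is not the topological construction but the bookkeeping of the descriptive indices: one has to check that applying the preceding lemma with the correct parameters $(\mu, \nu+1)$ yields the sharp bound $\bSigma^0_{\nu+1}(\tau')$ for $h^{-1}(U)$ (so that $h \in \B_\nu$ with respect to $\tau'$), and simultaneously that the new basis $\mathcal{B}'$ lies within $\bDelta^0_{\mu+1}$ (so that the retopologisation map $f$ falls exactly into $\B_\mu$). The extra constraint $\nu+1 \geq 2$ from the lemma is precisely what forces the separate treatment of the case $\nu = 0$.
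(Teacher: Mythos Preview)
Your proof is correct and follows essentially the same approach as the paper's: refine the topology via the preceding lemma so that $h$ becomes Baire class $\nu$ out of $(\RR,\tau')$, then push through a homeomorphism onto a closed $F\subseteq\RR$ to split $h$ as $g\circ f$. The only cosmetic differences are that the paper applies the lemma directly to $\{h^{-1}(\bN_s)\}\subseteq\bDelta^0_{\mu+\nu+1}$ (using that the $\bN_s$ are clopen) rather than first decomposing into $\bPi^0_{\mu+\nu}$ pieces, and that the paper extends $g$ from $F$ to $\RR$ via a continuous retraction $r\colon\RR\onto F$ instead of a constant; both devices yield $g\in\B_\nu$ for the same reason.
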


\begin{proof}
Let $\tau$ be the usual topology on $\RR$.
If $\mu=0$ or $\nu=0$ the result is trivial (simply take $f=\id$ and
$g=h$ or, respectively, $f=h$ and $g=\id$). Hence we can assume
$\mu,\nu > 0$.
Put $\mathcal{C}
 = \{h^{-1}(\bN_s) \mid s \in \seqo \} \subseteq \bDelta^0_{\mu+ \nu +1}$.
Let $\B \subseteq \bDelta^0_{\mu+1}$ be obtained as in the previous lemma,
that is such that for any topology $\tau'$ if $\B \subseteq \bDelta^0_1(\tau')$
then $\mathcal{C} \subseteq \bDelta^0_{\nu+1}(\tau')$. Apply Lemma
\ref{lemmarel} to $\B$ 
in order to obtain a zero-dimensional Polish topology $\tau'$ such that
$\B \subseteq \bDelta^0_1(\tau')$ and let $F\subseteq \RR$ be a closed set such
that $H \colon  (F,\tau) \to (\RR,\tau')$ is an homeomorphism. Finally
let $r \colon  \RR \onto 
F$ be a retraction. Now put $g = h \circ H \circ r \colon  (\RR,\tau) \to
(\RR,\tau)$ and $f = 
H^{-1} \colon  (\RR,\tau) \to (\RR,\tau)$.
It is easy to check that $h \colon  (\RR,\tau') \to (\RR,\tau)$ is of Baire class $\nu$,
and thus also $g$ is of Baire class $\nu$. Moreover, since $H^{-1} \colon 
(\RR,\tau') \to (F,\tau)$ is continuous and $\bDelta^0_1(\tau') \subseteq
\bDelta^0_{\mu+1}(\tau)$, we have that $f$ is of Baire class $\mu$. Thus we have
only to prove that $g \circ f = h$. Since ${\rm range}(H^{-1}) = F$, we have
that $r(H^{-1}(x)) = {\rm id}(H^{-1}(x)) = H^{-1}(x)$ for every $x \in \RR$.
But then
\[ g \circ f(x) = h(H(r(H^{-1}(x)))) = h(H(H^{-1}(x))) = h(x). \qedhere \] 
\end{proof}

Observe that the same statement
is true if we replace $h$ with a $\bSigma^0_{\mu+\nu}$-measurable
function (with $\mu,\nu>1$) and we require that there are a
$\bSigma^0_\mu$-measurable function  and a
$\bSigma^0_\nu$-measurable function whose composition gives $h$.

\begin{remark}
  The previous proposition can be applied also to other Polish
  spaces $\mathscr{X}$ (clearly we can assume again that $\mu,\nu \neq
  0$, otherwise the result is trivial). In fact the same argument  shows
  that for every 
  $h \colon  \mathscr{X} \to \mathscr{X}$ of Baire class $\mu+\nu$ 
  there are  $f \colon  \mathscr{X} \to \RR$ of Baire class $\mu$ and $g
  \colon  \RR \to \mathscr{X}$ of Baire class $\nu$ such that $h = g \circ
  f$. Moreover, if we assume that $\mathscr{X}$ is (uncountable
  and) not $\mathbf{K}_\sigma$, the same result remains true also
  replacing $f$  and $g$
  with two functions $f' ,g' \colon \mathscr{X}
  \to \mathscr{X}$ of Baire class $\mu$ and $\nu$, respectively. In
  fact in this case there is a closed
  set $F$ of $\mathscr{X}$ which is homeomorphic to $\RR$ via some
  function $H'$, hence one can define $f' = H'^{-1} \circ f$ and $g' = (\restr{g \circ
  H'}{F}) \cup (\restr{f_0}{\mathscr{X} \setminus F})$, where $f$ and
$g$ are  obtained as in the previous proof and $f_0$ is any constant
function, and check that they are still of the correct Baire
class. Finally, this last version of  Proposition \ref{propBaire} can be further
extended to \emph{every} uncountable Polish space $\mathscr{X}$ if we
assume $\nu \neq 1$: in fact in this case we can use the fact that
every zero-dimensional Polish space is homeomorphic to some
$\mathbf{G}_\delta$ subspace of the Cantor space $\Can$, which is in turn
homeomorphic to a closed subset of $\mathscr{X}$. Therefore any
zero-dimensional Polish space is homeomorphic to a $\mathbf{G}_\delta$
set $G$ of $\mathscr{X}$ via some function $H'$, and we can define
$f'$ and $g'$ as above but replacing $F$ with $G$.
\end{remark}

\begin{theorem}[$\ZF+\ACOR$]
Let $\alpha$ be a nonzero countable ordinal. Then the closure under
composition of $\B_\alpha$ is exactly $\bigcup_{\mu < \xi} \B_\mu$, where
$\xi = \alpha \cdot \omega$ is the least additively closed ordinal above
$\alpha$.
\end{theorem}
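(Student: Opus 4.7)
The plan is to prove the two inclusions separately, with the easy direction coming from iteration of the composition law already stated, and the nontrivial direction coming from iterated application of Proposition \ref{propBaire}.

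For the inclusion ``closure of $\B_\alpha$ under composition $\subseteq \bigcup_{\mu < \xi} \B_\mu$'', I would first recall the composition law quoted at the start of this section: if $f \in \B_\mu$ and $g \in \B_\nu$ then $g \circ f \in \B_{\mu+\nu}$. By a straightforward induction on the number of factors, any finite composition $f_n \circ \dotsb \circ f_1$ of elements of $\B_\alpha$ lies in $\B_{\alpha + \dotsb + \alpha} = \B_{\alpha \cdot n}$. Since the closure under composition consists precisely of such finite iterates, and since $\alpha \cdot n < \alpha \cdot \omega = \xi$ for every finite $n$, this closure is contained in $\bigcup_{\mu < \xi} \B_\mu$.

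For the reverse inclusion, fix $h \in \B_\mu$ with $\mu < \xi = \alpha \cdot \omega$. Pick $n \geq 1$ with $\mu < \alpha \cdot n$; then $h \in \B_{\alpha \cdot n}$ by monotonicity of the Baire hierarchy. I would then show by induction on $n$ that every $h \in \B_{\alpha \cdot n}$ can be written as an $n$-fold composition of elements of $\B_\alpha$. The base case $n = 1$ is immediate. For $n \geq 2$, use the ordinal identity $\alpha \cdot n = \alpha + \alpha \cdot (n-1)$ together with Proposition \ref{propBaire} to decompose $h = g \circ f$ with $f \in \B_\alpha$ and $g \in \B_{\alpha \cdot (n-1)}$; the inductive hypothesis applied to $g$ expresses $g$, and hence $h = g \circ f$, as a finite composition of elements of $\B_\alpha$.

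The only auxiliary fact still needed is that $\alpha \cdot \omega$ is indeed the least additively closed ordinal strictly above $\alpha$; this is a standard observation (the additively closed ordinals are exactly the ordinals of the form $\omega^\beta$, and if $\omega^\beta \leq \alpha < \omega^{\beta+1}$ then $\alpha \cdot \omega = \omega^{\beta+1}$, since $\alpha \cdot n \geq \omega^\beta \cdot n$ while $\alpha \cdot n < \omega^{\beta+1}$). Essentially all the work is already absorbed into Proposition \ref{propBaire}: once the decomposition of a Baire class $\mu+\nu$ function into a Baire class $\mu$ factor composed with a Baire class $\nu$ factor is in hand, the remainder of the argument is just bookkeeping with ordinal arithmetic and induction on $n$, so I do not expect any genuine obstacle in carrying it out.
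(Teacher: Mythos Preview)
Your proposal is correct and follows essentially the same approach as the paper: both directions are handled exactly as you describe, with the nontrivial inclusion proved by induction on $n$ via Proposition~\ref{propBaire}. The only cosmetic difference is that the paper writes $\alpha \cdot n = \alpha \cdot (n-1) + \alpha$ and peels off a $\B_\alpha$ factor as the \emph{outer} function $g$, whereas you write $\alpha \cdot n = \alpha + \alpha \cdot (n-1)$ and peel it off as the \emph{inner} function $f$; both are equally valid applications of Proposition~\ref{propBaire}.
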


\begin{proof}
One direction is trivial (since the composition of $n$ Baire class
$\alpha$ functions is in 
$\B_{\alpha \cdot n}$). Suppose now that $h$ belongs to $\B_{\alpha
  \cdot n}$ for some $1 \leq n 
\in \omega$. We will prove by induction on $n$ that $h$ belongs to the
closure under composition of $\B_\alpha$. If $n=1$  
there is nothing to prove, while if $n = m+1$ (for some $m \geq
1$) then $h \in \B_{\alpha \cdot m + \alpha}$ and we can apply
Proposition \ref{propBaire} to get $f \in \B_{\alpha \cdot m}$ and $g \in
\B_\alpha$ such that $h = g \circ f$. Applying now the inductive
hypothesis to $f$ we get the result.
\end{proof}

By the previous theorem, we are naturally led to take any countable additively closed
ordinal $\xi$ (recall that $\xi$ is additively closed if and only if
  either $\xi = 0,1$ or $\xi = \omega^\mu$ for some ordinal $\mu$) and
study the degree structure induced by
\[ \sf{B}_\xi = \B_{< \xi} = \bigcup\nolimits_{\mu < \xi} \B_\mu\]
(by the rule of composition above, $\sf{B}_\xi$ is closed
under composition and hence it is a Borel set of reductions).
Since it is straightforward to check that $\sf{B}_\xi$
is a good Borel reducibility (and therefore has the
\textbf{SDP}), and that
$\Delta_{\sf{B}_\xi} = \bDelta^0_{<\xi}$
(which in particular implies that $\sf{B}_\xi$ is of type III), we can apply Theorem
\ref{theorequivalence} to get that $\sf{B}_\xi$ is equivalent to any (regular)
chain of reductions of rank $\xi$, and thus induces the same degree-structure.

This equivalence is non trivial
(at least for $\xi=\omega$): in fact we will show that $\sf{B}_\omega$ is not contained
in $\bigcup_{n \in \omega}\sf{D}_n$ by proving that there is a Baire
class $1$ function which is not in $\sf{D}_n$ for any $n \in
\omega$ (this discussion will also cover the missing proofs
about the Pawlikowski function in
\cite{mottorosborelamenability}).
First let us recall the definition of the
 Pawlikowski
 function $P$ from \cite{solecki}. Let $\omega+1$ have the order
topology and consider the space ${}^\omega(\omega+1)$
endowed with the corresponding product topology. It is easy to check that ${}^\omega (\omega+1)$ is perfect, zero-dimensional
and compact, hence it is homeomorphic to the Cantor space $\Can$. Let $\gamma\colon \omega+1 \to \omega$
be the
bijection defined by $\gamma(\omega) = 0$ and $\gamma(n) = n+1$ for
any $n \in \omega$, and define $P \colon  {}^\omega(\omega+1) \to \RR$
using $\gamma$ coordinatewise, i.e.\ putting $P(x) = \langle
\gamma(x(n))\mid n \in \omega \rangle$. Define also (again coordinatewise)
 a ``partial'' function
\[\hat{\gamma}\colon  {}^{<\omega}(\omega+1) \to {}^{<\omega}\omega\colon s \mapsto
\langle \gamma(s(i))\mid i < \leng(s) \rangle,\]
and note that both $P$ and $\hat{\gamma}$ are bijection between the corresponding
spaces.

Given
$\tau \in {}^{<\omega}(\omega+1)$, consider the set $C_\tau =\{x \in
{}^\omega(\omega+1)\mid \tau \subseteq x\}$: by simple arguments, it turns out
that $C_\tau$ is always a closed set, has empty interior if and only if there
is some $i<\leng(\tau)$ such that $\tau(i)=\omega$, and is also open (hence
a clopen set) if and only if $\tau(i) \neq \omega$ for every $i<\leng(\tau)$.
In particular, this implies that for every $n \in \omega$ the set $K_n = \{x
\in {}^\omega(\omega+1)\mid x(n)=\omega\}$ is a closed set with empty interior
and hence a nowhere dense proper closed set (from this fact one can also
derive that ${}^\omega\omega$ is a proper $\mathbf{G}_\delta$ subset
of ${}^\omega 
(\omega+1)$ which is also comeager and dense in it).

\begin{lemma}[$\ZF+\ACOR$]\label{lemmapropagation}
  Let $\mathscr{X}$ be any zero-dimensional space. Let $\alpha < \omega_1$ be a
  nonzero ordinal and let $\langle \alpha_n\mid 
n \in \omega \rangle$ be an increasing  sequence of ordinals
smaller than $\alpha$ and
cofinal in it. For every family of sets $\{P_n \subseteq {}^\omega
\mathscr{X} \mid n
\in \omega \}$ such that $P_n$ is $\bPi^0_{\alpha_n}$-complete (for
every $n \in 
\omega$), the set $S \subseteq {}^\omega \mathscr{X}$ defined by
\[S = \{x \in {}^\omega \mathscr{X} \mid \exists n
(\pi^\mathscr{X}_n(x) \in P_n)\}\]
is a $\bSigma^0_\alpha$-complete set.
In particular, if $P \subseteq {}^\omega \mathscr{X}$ is a
$\bPi^0_\alpha$-complete 
set then $S=\{x \in {}^\omega \mathscr{X}\mid \exists
n(\pi^\mathscr{X}_n(x)\in P)\}$ is a
$\bSigma^0_{\alpha+1}$-complete set.
\end{lemma}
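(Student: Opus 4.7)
The plan is to split the proof into the easy verification that $S \in \bSigma^0_\alpha$ and the substantive proof of $\bSigma^0_\alpha$-hardness. Since each projection $\pi_n^\mathscr{X}$ is continuous and $P_n \in \bPi^0_{\alpha_n}$, the preimages $(\pi_n^\mathscr{X})^{-1}(P_n)$ lie in $\bPi^0_{\alpha_n} \subseteq \bigcup_{\beta<\alpha}\bPi^0_\beta$, and thus $S = \bigcup_n (\pi_n^\mathscr{X})^{-1}(P_n) \in \bSigma^0_\alpha$ by definition.

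For the hardness part, given an arbitrary $A \in \bSigma^0_\alpha$ living in some zero-dimensional Polish space $\mathscr{Y}$, the natural thing to try is the product reduction $g \colon \mathscr{Y} \to {}^\omega\mathscr{X}$ defined by $g(y) = \bigotimes_n^\mathscr{X} f_n(y)$, where each $f_n \colon \mathscr{Y} \to {}^\omega\mathscr{X}$ continuously reduces some $B_n$ to $P_n$. This strategy requires writing $A = \bigcup_n B_n$ with $B_n \in \bPi^0_{\alpha_n}$ \emph{at the matching index} $n$. Once the $B_n$'s and corresponding $f_n$'s are in hand (the $f_n$'s existing by the $\bPi^0_{\alpha_n}$-completeness of $P_n$), continuity of $g$ follows from the property of $\bigotimes^\mathscr{X}$ recalled in the preliminaries, and the chain of equivalences $g(y) \in S \iff \exists n\,(\pi_n^\mathscr{X}(g(y)) \in P_n) \iff \exists n\,(f_n(y) \in P_n) \iff \exists n\,(y \in B_n) \iff y \in A$ confirms that $g$ is the desired reduction.

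The key (and really the only delicate) technical point is therefore the reindexing step. Starting from a definitional representation $A = \bigcup_j C_j$ with $C_j \in \bPi^0_{\beta_j}$ and $\beta_j < \alpha$, I would exploit the cofinality of $\seq{\alpha_n}{n \in \omega}$ in $\alpha$ to build a surjection $\sigma \colon \omega \to \omega$ with $\beta_{\sigma(n)} \leq \alpha_n$ for every $n$: this is possible because for each $j$ the set $\{n \mid \alpha_n \geq \beta_j\}$ is cofinite, so one can list the $C_j$'s with repetitions, placing each $C_j$ only at sufficiently late positions. Setting $B_n = C_{\sigma(n)}$ then gives $B_n \in \bPi^0_{\alpha_n}$ and $\bigcup_n B_n = A$, closing the argument.

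For the ``in particular'' clause, setting $P_n := P$ for every $n$ lets the same $\bigotimes^\mathscr{X}$-construction go through verbatim: the upper bound $S \in \bSigma^0_{\alpha+1}$ is immediate from $(\pi_n^\mathscr{X})^{-1}(P) \in \bPi^0_\alpha$, and hardness follows by applying the construction to any decomposition $A = \bigcup_n B_n$ with $B_n \in \bPi^0_\alpha$ of a given $\bSigma^0_{\alpha+1}$ set (this time no reindexing is needed, since a single completeness property suffices).
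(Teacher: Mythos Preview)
Your argument is essentially the paper's: both build the reduction as a product $\bigotimes^\mathscr{X}_n f_n$ with each $f_n$ continuous, using the $\bPi^0_{\alpha_n}$-completeness of $P_n$ to handle the pieces of a given $\bSigma^0_\alpha$ set. The only difference is bookkeeping, and yours has a small slip. You want a \emph{surjection} $\sigma\colon\omega\to\omega$ with $\beta_{\sigma(n)}\le\alpha_n$ for \emph{every} $n$, but the cofiniteness of $\{n\mid\alpha_n\ge\beta_j\}$ only guarantees that each $j$ can be placed at sufficiently late positions; it does not guarantee that the \emph{early} positions can be filled at all. If $\alpha_0<\min_j\beta_j$ (which nothing rules out), there is no legal value for $\sigma(0)$. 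The fix is trivial --- set $B_n=\emptyset$ at such $n$, or pad the family $\{C_j\}$ with the empty set --- but as written the surjection need not exist.

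The paper sidesteps this by not trying to fill every slot: it picks an injective subsequence $n_k$ with $\beta_k\le\alpha_{n_k}$, lets $f_{n_k}$ reduce $R_k$ to $P_{n_k}$, and at all remaining indices $n$ takes $f_n$ to be the constant function with value some fixed $x_n\notin P_n$ (such points exist since $P_n$ is proper). This makes the unused coordinates automatically land outside $P_n$, so they contribute nothing to $S$, and no reindexing is needed.
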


\begin{proof}
  Clearly $S \in \bSigma^0_\alpha$. Let now $Q$ be any set in
$\bSigma^0_\alpha$: by definition there is an increasing sequence $\langle
\beta_k\mid k\in\omega \rangle$ of ordinals smaller than $\alpha$ such that
$Q =\bigcup_k R_k$ for some sets $R_k \in \bPi^0_{\beta_k}$. Since the sequence
$\langle \alpha_n\mid n \in \omega \rangle$ is increasing and cofinal in $\alpha$ we can find
a subsequence $\langle \alpha_{n_k}\mid k\in\omega \rangle$ such that
$\beta_k \leq \alpha_{n_k}$ for any $k \in \omega$. Moreover, since every $P_n$
is $\bPi^0_{\alpha_n}$-complete we can choose a sequence of points $\langle
x_n\mid n \in\omega \rangle$ such that $x_n \notin P_n$ for every $n \in
\omega$. Now define a sequence of continuous functions $\langle f_n\mid
n \in \omega \rangle$ by letting $f_{n_k}$ be any continuous
reduction of $R_k$  in
$P_{n_k}$ (which exists since 
$R_k \in \bPi^0_{\beta_k} \subseteq \bPi^0_{\alpha_{n_k}}$), and $f_n$
be the 
constant function with value $x_n$ if there is no $k \in \omega$ such that
$n=n_k$. Finally, put $f=\bigotimes\nolimits^\mathscr{X}_n f_n$. Clearly $f$ is
continuous and it is not hard to check that it reduces $Q$ to $S$, i.e.\ that $x \in Q \iff
f(x) \in S$ for every $x \in {}^\omega \mathscr{X}$.
\end{proof}

Now we are ready to prove the
following proposition which gives the exact complexity of $P$.

\begin{proposition}[$\ZF+\ACOR$] \label{propPcomplexity}
The function $P$ is of Baire class $1$ and is in $\sf{D}_\omega$
but not in $\sf{D}_n$ (for any nonzero $n \in \omega$).
\end{proposition}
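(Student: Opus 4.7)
The statement has three components, which I would address in order. For the Baire class $1$ claim, I would directly compute: since $P$ acts coordinatewise via $\gamma$ with $\gamma^{-1}(0) = \{\omega\}$ and $\gamma^{-1}(k+1) = \{k\}$ for $k \in \omega$,
\[ P^{-1}(\bN_s) = \bigcap\nolimits_{i < \leng(s)} \{x \in {}^\omega(\omega+1) \mid \gamma(x(i)) = s(i)\} \]
is a finite intersection of $K_i$'s (for $i$ with $s(i) = 0$) and basic clopen sets (for $i$ with $s(i) > 0$), hence closed. Thus $P^{-1}$ of every open set is $F_\sigma$, i.e.\ in $\bSigma^0_2$, confirming $P \in \B_1$. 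For the second claim, I would invoke the standard induction on $\beta$ showing that any Baire class $1$ function $f$ satisfies $f^{-1}(\bSigma^0_\beta) \subseteq \bSigma^0_{1+\beta}$; taking $\beta = \omega$ and using $1+\omega = \omega$ gives $P^{-1}(\bDelta^0_\omega) \subseteq \bDelta^0_\omega$.

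The main work lies in the third claim, $P \notin \sf{D}_n$ for each nonzero $n \in \omega$. I would construct inductively sets $D_n \subseteq \Bai$ with $D_n \in \bDelta^0_n$ but $P^{-1}(D_n) \notin \bDelta^0_n$, achieving the sharper property that $P^{-1}(D_n)$ is $\bPi^0_1$-complete for $n=1$ and $\bSigma^0_n$-complete for $n \geq 2$. The base is $D_1 = \bN_{\langle 0 \rangle}$, giving $P^{-1}(D_1) = K_0$; to verify that $K_0$ is $\bPi^0_1$-complete, given a closed $F$ in a zero-dimensional Polish space $Y$, I would write $Y \setminus F = \bigsqcup_n U_n$ as a disjoint union of clopen sets, define $g\colon Y \to \omega+1$ to map $F$ to $\omega$ and each $U_n$ to $n$ (continuous because $g^{-1}([k,\omega]) = Y \setminus \bigcup_{n<k} U_n$ is open), and check that $y \mapsto g(y) \conc \vec{0}$ continuously reduces $F$ to $K_0$.

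For the inductive step, the crucial input is the self-similarity $\pi_k \circ P = P \circ \pi^{\omega+1}_k$, which holds pointwise because $P$ acts coordinatewise. Writing $\Phi(A) = \{y \in \Bai \mid \exists k\,(\pi_k(y) \in A)\}$ and $\Phi'$ for the analog on ${}^\omega(\omega+1)$, this identity gives $P^{-1}(\Phi(A)) = \Phi'(P^{-1}(A))$. I would define $D_2 = \Phi(D_1)$ (exploiting that $K_0$ is already $\bPi^0_1$-complete) and $D_{n+1} = \Phi(\neg D_n)$ for $n \geq 2$. Then $P^{-1}(D_{n+1}) = \Phi'(\neg P^{-1}(D_n))$ (with $\Phi'(K_0)$ in place of the right-hand side when $n=1$); since the input is $\bPi^0_n$-complete by the inductive hypothesis, Lemma \ref{lemmapropagation} applied in ${}^\omega(\omega+1)$ yields that $P^{-1}(D_{n+1})$ is $\bSigma^0_{n+1}$-complete, hence outside $\bDelta^0_{n+1}$. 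Parallel bookkeeping on the $\Bai$ side gives $D_n \in \bSigma^0_{n-1}$ for $n \geq 2$, so $\neg D_n \in \bPi^0_{n-1}$ and $D_{n+1}$ is a countable union of $\bPi^0_{n-1}$-sets, landing in $\bSigma^0_n \subseteq \bDelta^0_{n+1}$ as required.

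The principal subtlety will be precisely this complexity alternation: complementing $D_n$ before applying $\Phi$ at each stage is essential, both so that Lemma \ref{lemmapropagation} receives a $\bPi^0_\alpha$-complete input (as it requires) and so that the complexity of $D_{n+1}$ stays exactly one pointclass level below that of $P^{-1}(D_{n+1})$, ensuring $D_{n+1}$ lies inside $\bDelta^0_{n+1}$ while $P^{-1}(D_{n+1})$ is visibly outside.
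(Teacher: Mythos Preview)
Your proof is correct and follows essentially the same strategy as the paper: both construct a sequence of sets by iterating the ``$\exists k\,(\pi_k(x)\in \cdot)$'' operator (the paper's $S_n$, your $D_n$), exploit the commutation $\pi_k\circ P = P\circ\pi_k^{\omega+1}$ to transfer this to ${}^\omega(\omega+1)$, and invoke Lemma~\ref{lemmapropagation} to see the complexity jump. The only differences are cosmetic---your indexing is shifted by one relative to the paper's, and you give an explicit verification that $K_0$ is $\bPi^0_1$-complete where the paper simply asserts the base case.
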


\begin{proof}
  Since $P^{-1}(\bN_s) = C_{\hat{\gamma}^{-1}(s)}$ for every $s \in
  {}^{<\omega}
\omega$, we have that $P^{-1}(U)$ is the union of countably many closed sets
for any open set $U \subseteq \RR$: hence $P$ is of Baire class $1$ (this also
implies that $P \in \sf{D}_\omega$ since every Baire class $n$
function is in $\sf{D}_\omega$).
It remains only to prove that $P$ is not
in $\sf{D}_n$ for any $n \in \omega$. First define
\begin{align*}
S_1 &= \{x \in \RR\mid \exists n (x(n)=0)\}\\
S_{n+1}&= \{ x \in \RR \mid \exists n (\pi_n(x) \notin S_n\}
\end{align*}
for $n \geq 1$. One can inductively check
that $S_n\subseteq \RR$ is a $\bSigma^0_n$
set, and that $P^{-1}(S_n) \subseteq {}^\omega(\omega+1)$ is a complete
(and hence also proper) $\bSigma^0_{n+1}$
set (use Lemma
\ref{lemmapropagation} 
for the inductive step). Passing to the complements, we have that
$\neg S_n \in 
\bPi^0_n \subseteq \bDelta^0_{n+1}$ but $P^{-1}(\neg S_n) \notin
\bDelta^0_{n+1}$, i.e.\ $P$ is not a $\bDelta^0_{n+1}$-function.
\end{proof}

The function $P$ is defined from ${}^\omega (\omega+1)$ to $\RR$, while we are
interested in functions from $\RR$ into itself. Nevertheless it is easy to see
how to obtained from $P$ a function $\hat{P} \colon  \RR \to \RR$ with the same
complexity. Let  $h\colon {}^\omega(\omega+1) \to \Can$ be any homeomorphism between
${}^\omega(\omega+1)$ and the
Cantor space $\Can$ (which is a closed subspace of $\RR$). Define
$\hat{P}\colon  \RR \to \RR$ by letting $\hat{P}(x) =
P(h^{-1}(x))$ if  $x \in \Can$ and $\hat{P}(x)=\vec{0}$ otherwise.
Following
\cite{solecki}, for every $f \colon X_1 \to Y_1$
and $g \colon X_2 \to Y_2$ put $f 
\sqsubseteq g$ just in case there are two embeddings $\fhi \colon
X_1 \to X_2$ and $\psi \colon f(X_1) \to Y_2$ such that $\psi
\circ f = g \circ \fhi$.
Clearly, if $f \sqsubseteq g$ and $g$ is a
$\bDelta^0_\xi$-function (respectively, a Baire class $\xi$ function) then
also $f$ is a $\bDelta^0_\xi$-function (respectively a Baire class $\xi$
function), and therefore if $f$ is \emph{not} a
$\bDelta^0_\xi$-function 
(resp.\ a Baire class $\xi$ function) then neither $g$ is a
$\bDelta^0_\xi$-function (resp.\ a Baire class $\xi$ function).
Since it is not hard to prove that $\hat{P}$ is still a Baire class
$1$ function 
and  that $h$ and the
identity
function witness $P \sqsubseteq \hat{P}$, 
we have that $\hat{P} \in \sf{D}_\omega$ but $\hat{P}
\notin \sf{D}_n$ for any $n \in \omega$, hence we are done.\\

\section{Comparing hierarchies of degrees}\label{sectioncomparing}

\emph{All sets of functions considered in this section are assumed to be good
Borel reducibilities}.
Let $\G_\mu$ denote an arbitrary good Borel set of reductions with
$\Delta_{\G_\mu} = \bDelta^0_\mu$ (in particular $\G_\mu$ is always of
type II). 
To clarify the relationship between (the degree-structures induced by)
different good Borel  reducibilities, 
note that each $\F$ of type I induces the finest possible hierarchy
(in particular finer than the hierarchy induced by any $\G_1$, which
is in some sense the next ``level of reducibility''), and each
$\mathcal{H}$ of type III with $\Delta_{\mathcal{H}} =
\bDelta^0_{< \xi}$ (for $\xi$ a countable limit ordinal) induces an
hierarchy of degrees which is coarser than the hierarchy of the
$\G_\mu$-degrees (for any $\mu<\xi$), and finer than the hierarchy of
the $\G_\xi$-degrees. Finally $\Bor$, and the sets of reductions with
the same characteristic set, gives the coarsest hierarchy. By part
vi)  of Theorem \ref{theorgeneralproperties}, it is clear that for an
$\F$-hierarchy being coarser than the $\F'$-hierarchy amount to the
fact that the $\F$-selfdual degrees are obtained gluing together many
$\F'$-degrees: therefore to understand how the $\F$-structure can be
obtained from the finer ones we must describe how each $\F$-selfdual
degree is constructed. 

The first case, that is when we want to compare the $\F$-hierarchy
(for $\F$ of type I) with the $\G_1$-hierarchy, is clearly solved by
the Steel--Van Wesep Theorem, which says that ${A \leq_\W \neg A} \iff
{A \leq_\L \neg A}$: in fact since $\L \subseteq \F \subseteq \W
\simeq \G_1$ it must be the case that $A \leq_{\G_1} \neg A$ if and
only if $A \leq_\F \neg A$, and as $s_\Lip(A) \leq_\W A$ we get that
each $\G_1$-selfdual degree is exactly the union of a (maximal)
$\omega_1$-chain of consecutive $\F$-selfdual degrees. 

Now consider $\mathcal{H}$ of type III as above: as $\mathcal{H}
\simeq \bigcup_{\mu < \xi} \G_\mu$, it is clear that if $A
\leq_{\mathcal{H}} \neg A$ then $[A]_{\mathcal{H}} = \bigcup_{\mu <
  \xi } [A]_{\G_\mu}$. Therefore the $\mathcal{H}$-hierarchy is the
minimal degree-structure which is refined by all the
$\G_\mu$-structures.  

Finally, to compare the $\mathcal{H}$-hierarchy with the
$\G_\xi$-hierarchy, for $\vec{\F}$ a regular chain of reductions first
define\footnote{We must define the $\omega_1$-chain on the
  $\vec{\F}$-degrees (rather than on sets) because the Perfect Set
  Property, which already follows from $\SLOL$, forbids the
  possibility of having an $\omega_1$-chain of sets of bounded Borel
  rank.} 
$s_\xi^\alpha [B]_{\vec{\F}}$  
(for $B \subseteq \RR$ and $1 \leq \alpha < \omega_1$) by letting
$s_\xi^1 [B]_{\vec{\F}} = [B]_{\vec{\F}}$,
$s_\xi^\alpha[B]_{\vec{\F}} = [\bigoplus_n C_n]_{\vec{\F}}$ (where
$C_n \in s_\xi^{\alpha_n}[B]_{\vec{\F}}$ and the $\alpha_n$'s are
increasing and cofinal in $\alpha$) if $\alpha$ is limit, and
$s_\xi^\alpha[B]_{\vec{\F}} = [s_{\vec{\F}}(C)]_{\vec{\F}}$ (where $C
\in s_\xi^{\alpha'}[B]_{\vec{\F}}$) if $\alpha = \alpha'+1$. (Note
that if $B \leq_{\vec{\F}} \neg B$ then the
$s^\alpha_\xi[B]_{\vec{\F}}$'s are exactly the $\omega_1$-chain of
consective $\vec{\F}$-selfdual degrees which follows
$[B]_{\vec{\F}}$.) Moreover, given a pointclass $\Gamma$ and a nonzero
ordinal $\mu < \omega_1$, define  
\[ {\rm PU}_\mu (\Gamma) = \left\{ \bigcup\nolimits_n (A_n \cap D_n)
  \mid A_n \in \Gamma \text{ and }\seq{D_n}{n \in \omega} \text{ is a
  }\bDelta^0_\mu\text{-partition of }\RR \right\}\] 
and
\[ {\rm SU}_\mu (\Gamma) = \left\{ \bigcup\nolimits_n (A_n \cap D_n)
  \mid A_n \in \Gamma,  D_n \in \bDelta^0_\mu \text{ and }D_n \cap D_m
  = \emptyset \text{ if } n \neq m \right\}.\] 
Finally, for $\mu$ limit and $\mu_n$'s (strictly) increasing and
cofinal in $\mu$ define ${\rm SU}_{< \mu, \alpha}(\Gamma)$ by the
following induction on $\alpha < \omega_1$ (note the definition is
independent from the choice of the $\mu_n$'s): 
\[ {\rm SU}_{< \mu,\alpha}(\Gamma) = 
\begin{cases}
\Gamma & \text{if }\alpha = 0\\
\bigcup_n {\rm SU}_{\mu_n} (\bigcup_{\alpha' < \alpha} {\rm
  SU}_{<\mu,\alpha'}(\Gamma)) & \text{if }\alpha>0. 
\end{cases}
\]

\begin{proposition}[$\ZF+\ACOR$]\label{propIIIvsII}
 Let $\vec{\F}$ be a regular chain of reductions of rank $\xi$ (for $\xi$ a countable limit ordinal). For $A,B \subseteq \RR$, $A \leq_{\sf{D}^\W_\xi} B$ if and only if $A \leq_\W C$ for some $C \in s_\xi^\alpha[B]_{\vec{\F}}$ and $\alpha < \omega_1$.
\end{proposition}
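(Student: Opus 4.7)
The plan is to treat the two directions separately.

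For the $(\Leftarrow)$ direction, since $\W \subseteq \sf{D}^\W_\xi$ and $\vec{\F}$ has the \textbf{SDP} with $\Delta_{\vec{\F}} = \bDelta^0_{<\xi} \subsetneq \bDelta^0_{\xi} = \Delta_{\sf{D}^\W_\xi}$, Theorem~\ref{theorequivalence} reduces the goal to showing $s_\xi^\alpha(B) \leq_{\sf{D}^\W_\xi} B$ for every $\alpha < \omega_1$ (writing $s_\xi^\alpha(B)$ for any representative of $s_\xi^\alpha[B]_{\vec{\F}}$); indeed if $C \in s_\xi^\alpha[B]_{\vec{\F}}$ and $A \leq_\W C$, then $A \leq_{\sf{D}^\W_\xi} C \equiv_{\vec{\F}} s_\xi^\alpha(B) \leq_{\sf{D}^\W_\xi} B$. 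I would prove the reduction by transfinite induction on $\alpha$: the base $\alpha = 1$ is immediate; at a successor $\alpha = \alpha'+1$, apply Proposition~\ref{propsuccessor} with $\G = \F_n$ (of level $\mu_n < \xi$) and $\G' = \sf{D}^\W_\xi$ (of level $\xi$) to each summand of $s_{\vec{\F}}(s_\xi^{\alpha'}(B)) = \bigoplus_n \Sigma^{\mu_n}(s_\xi^{\alpha'}(B))$, then assemble the reductions through Lemma~\ref{lemmapartition}.b on the partition $\{\bN_{\langle n \rangle}\}_{n \in \omega}$ (applicable since $\sf{D}^\W_\xi$ is Borel-amenable) and chain with the inductive hypothesis. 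The limit step is analogous.

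The $(\Rightarrow)$ direction hinges on a structural claim: letting $B_0$ be a $\vec{\F}$-minimal representative of $[B]_{\sf{D}^\W_\xi}$, the $\vec{\F}$-degrees sitting inside $[B]_{\sf{D}^\W_\xi}$ are exactly $\{s_\xi^\beta[B_0]_{\vec{\F}} : \beta < \omega_1\}$. One inclusion is the $(\Leftarrow)$ direction already proved. For the other, the Lipschitz-like shape of $\vec{\F}$ (Theorem~\ref{theordic}, Corollary~\ref{coruncountable}, Theorem~\ref{theorsuccessor}) forces the first $\vec{\F}$-nonselfdual pair above $[B_0]_{\vec{\F}}$ to occur at the limit of uncountable cofinality of the chain, which by the $\Sigma^\xi/\Pi^\xi$ construction recalled at the end of Section~\ref{sectionpreliminaries} coincides with the $\sf{D}^\W_\xi$-immediate successor pair of $[B]_{\sf{D}^\W_\xi}$ and therefore lies strictly outside it. With the structural fact in hand I would split into cases: if $A \equiv_{\sf{D}^\W_\xi} B$, locate $\beta$ with $[A]_{\vec{\F}} = s_\xi^\beta[B_0]_{\vec{\F}}$ and set $C := A \oplus s_\xi^\beta(B_0)$, which by Theorem~\ref{theorgeneralproperties}.v stays in the same $\vec{\F}$-degree as $A$ and satisfies $A \leq_\W C$ trivially; a reindexing of $\beta$ relative to $B$ instead of $B_0$ (or simply taking $\alpha = 1$ and $C := A \oplus B$ when $[A]_{\vec{\F}} \leq [B]_{\vec{\F}}$) gives the required $\alpha$. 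If $A <_{\sf{D}^\W_\xi} B$, induct on the $\sf{D}^\W_\xi$-rank of $B$: a witness $C'$ at the immediate $\sf{D}^\W_\xi$-predecessor of $B$ lifts to $C := C' \oplus s_{\vec{\F}}(B) \in s_\xi^2[B]_{\vec{\F}}$.

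The main obstacle is the structural claim: excluding ``exotic'' $\vec{\F}$-degrees that might lie in $[B]_{\sf{D}^\W_\xi}$ strictly above the chain $\{s_\xi^\beta[B_0]_{\vec{\F}}\}$ but strictly below the pair $\{[\Sigma^\xi(B_0)]_{\vec{\F}}, [\Pi^\xi(B_0)]_{\vec{\F}}\}$. This is where the Lipschitz-like shape of the $\vec{\F}$-hierarchy and the Wadge-like shape of the $\sf{D}^\W_\xi$-hierarchy interact most sensitively, and Theorem~\ref{theorequivalence} must serve as the principal bridge translating $\vec{\F}$-comparisons into $\sf{D}^\W_\xi$-comparisons and vice versa.
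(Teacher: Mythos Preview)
Your backward direction is fine and actually more explicit than the paper, which leaves it implicit.

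The forward direction has a genuine gap at exactly the point you flag as ``the main obstacle''. Your structural claim --- that every $\vec{\F}$-degree inside $[B]_{\sf{D}^\W_\xi}$ lies in the chain $\{s_\xi^\beta[B_0]_{\vec{\F}} : \beta<\omega_1\}$ --- is essentially equivalent to the Steel--Van~Wesep style statement ``$\sf{D}^\W_\xi$-selfdual $\Rightarrow$ $\vec{\F}$-selfdual'', which in the paper is derived \emph{as a corollary} of this very proposition. Concretely: suppose the first $\vec{\F}$-nonselfdual pair $\{[E]_{\vec{\F}},[\neg E]_{\vec{\F}}\}$ above the chain satisfied $E \equiv_{\sf{D}^\W_\xi} B_0$. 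Then $E$ would be $\sf{D}^\W_\xi$-selfdual yet $\vec{\F}$-nonselfdual, and none of the tools you invoke (Theorem~\ref{theorequivalence}, Corollary~\ref{coruncountable}, Theorem~\ref{theorsuccessor}, or the $\Sigma^\xi/\Pi^\xi$ construction) exclude this. Theorem~\ref{theorequivalence} only gives the implication $\leq_{\vec{\F}}\, \Rightarrow\, \leq_{\sf{D}^\W_\xi}$, i.e.\ the easy direction; it cannot bridge back.

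The paper avoids this circularity by a completely different route: it passes through Wadge's Theorem~E.4 (from his thesis), which gives the purely combinatorial identity ${\rm PU}_\xi(\bGamma) = \bigcup_{\alpha<\omega_1} {\rm SU}_{<\xi,\alpha}(\bGamma)$ for $\bGamma = \{D : D \leq_\W B\}$. Since $A \leq_{\sf{D}^\W_\xi} B \iff A \in {\rm PU}_\xi(\bGamma)$, this yields an ordinal rank on $A$, and an induction on that rank produces an explicit continuous reduction of $A$ into $\Sigma^{\mu_n+1}(\bigoplus_m C_m)$ at each step (via a concrete $\bigotimes_m f_m$ construction). The structural claim you are after then falls out \emph{afterwards}, not as an input. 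So your abstract degree-theoretic plan would need an independent proof of the higher-level Steel--Van~Wesep theorem to close, whereas the paper's hands-on decomposition of ${\rm PU}_\xi$ sidesteps the issue entirely.
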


\begin{proof}
 Let $\bGamma = \bGamma(B) = \{  D \subseteq \RR \mid D \leq_\W B\}$ be the boldface pointclass generated by $B$. It is immediate to check that $A \leq_{\sf{D}^\W_\xi} B \iff A \in {\rm PU}_\xi(\bGamma)$. By Theorem E.4 of chapter IV of Wadge's \cite{wadgethesis}, ${\rm PU}_\xi(\bGamma) = \bigcup_{\alpha<\omega_1} {\rm SU}_{< \xi,\alpha} (\bGamma)$, so let $\alpha$ be smallest such that $A \in {\rm SU}_{< \xi, \alpha}(\bGamma)$. We will prove by induction on $\alpha$ that $A \leq_\W C$ for some $C \in s_\xi^{\alpha+1}[B]_{\vec{\F}}$. If $\alpha = 0$, then $A \in {\rm SU}_{<\xi,0}(\bGamma) = \bGamma$ and therefore $A \leq_\W B$ (by definition of $\bGamma$) and obviously $B \in [B]_{\vec{\F}} = s_\xi^1[B]_{\vec{\F}}$. Now assume $\alpha>0$, and let $n$ be such that $A \in {\rm SU}_{\mu_n} (\bigcup_{\alpha' < \alpha} {\rm SU}_{<\mu,\alpha'}(\bGamma))$, so that $A = \bigcup_m (A_m \cap D_m)$ where $D_m \in \bDelta^0_{\mu_n}$, $D_m \cap D_{m'} = \emptyset$ if $m \neq m'$, and $A_m \in {\rm SU}_{<\mu,\alpha_m}(\bGamma)$ for some $\alpha_m < \alpha$ (depending on $m$). By inductive hypothesis, $A_m \leq_\W C_m$ for some $C_m \in s_\xi^{\alpha_{m}+1}[B]_{\vec{\F}}$, and therefore $A_m \leq_\W \bigoplus_m C_m$ for every $m$. Moreover $\bigoplus_m C_m \in s_\xi^\alpha[B]_{\vec{\F}}$ as $\alpha = \sup \{ \alpha_m+1 \mid m \in \omega \}$ by its minimality. 

\begin{claim}
$A \leq_\W \Sigma^{\mu_n+1}(\bigoplus_m C_m)$.
\end{claim}

\begin{proof}[Proof of Claim]
Let $\seq{\mu_n}{n \in \omega}$ be the type of $\vec{\F}$, 
$P$ be the complete $\bPi^0_{\mu_n}$-set used to define the operator $\Sigma^{\mu_n+1}$, and $f_m$ be continuous functions such that $f_0$ reduces $\neg \bigcup_m D_m$ to $P$, $f_{2(m+1)}$ reduces $D_m$ to $P$, $f_1$ is constant with value $y \notin \bigoplus_m C_m$, and $f_{2m+3}$ is a reduction of $A_m$ to $\bigoplus_m C_m$: it is easy to check that $\bigotimes_m f_m$ reduces $A$ to $\Sigma^{\mu_n+1}(\bigoplus_m C_m)$ as required.
\renewcommand{\qedsymbol}{$\square$ \textit{Claim}}
\end{proof}

Since $\mu_n < \mu_n +1 \leq \mu_{n+1}$, we get $\Sigma^{\mu_n+1}(\bigoplus_m C_m) \leq_\W \Sigma^{\mu_{n+1}}(\bigoplus_m C_m) \leq_\W s_{\vec{\F}}(\bigoplus_m C_m) \in s_\xi^{\alpha+1}[B]_{\vec{\F}}$ and hence we are done.
\end{proof}

As a corollary of Proposition \ref{propIIIvsII} one gets a Steel--Van Wesep-style theorem for higher levels.

\begin{theorem}
Let $\G_\xi$ be as above and $\vec{\F}$ be a regular chain of reductions of rank $\xi$. Then $A \leq_{\G_\xi} \neg A$ if and only if $A \leq_{\vec{\F}} \neg A$.
In particular, $A \leq_{\sf{D}_\xi} \neg A$ implies that $A \leq_{\sf{D}_\mu} \neg A$ for some $\mu < \xi$.
\end{theorem}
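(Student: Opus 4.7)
The plan is as follows. The direction $A \leq_{\vec{\F}} \neg A \Rightarrow A \leq_{\G_\xi} \neg A$ is immediate from Theorem~\ref{theorequivalence}, since $\Delta_{\vec{\F}} = \bDelta^0_{<\xi} \subseteq \bDelta^0_\xi = \Delta_{\G_\xi}$ and both $\vec{\F}$ and $\G_\xi$ are good Borel reducibilities. For the converse, assume $A \leq_{\G_\xi} \neg A$. Using $\G_\xi \simeq \sf{D}^\W_\xi$ (again by Theorem~\ref{theorequivalence}), we may replace $\G_\xi$ by $\sf{D}^\W_\xi$ and let $B$ be $\L$-minimal in $[A]_{\sf{D}^\W_\xi}$. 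Since $\Lip(2) \subseteq \sf{D}^\W_\xi$, Proposition~\ref{propequivSDP} gives $B \leq_\L \neg B$, so that $B$ is $\L$-selfdual and, as $\L \subseteq \vec{\F}$, also $\vec{\F}$-selfdual; moreover, $\L$-minimality together with $\L$-selfduality of $B$ forces $B \leq_\L A$ and hence $B \leq_{\vec{\F}} A$.

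Now apply Proposition~\ref{propIIIvsII} to $A \leq_{\sf{D}^\W_\xi} B$ to find $\alpha < \omega_1$ and $C \in s_\xi^\alpha[B]_{\vec{\F}}$ with $A \leq_\W C$. Since $\W \subseteq \vec{\F}$, this gives $A \leq_{\vec{\F}} C$, and combined with $B \leq_{\vec{\F}} A$ we obtain
\[
[B]_{\vec{\F}} \leq_{\vec{\F}} [A]_{\vec{\F}} \leq_{\vec{\F}} [C]_{\vec{\F}} = s_\xi^\alpha[B]_{\vec{\F}}.
\]
By the parenthetical observation following the definition of $s_\xi^\alpha$, the $\vec{\F}$-selfduality of $B$ guarantees that $\{s_\xi^\gamma[B]_{\vec{\F}} : 1 \leq \gamma < \omega_1\}$ is precisely the $\omega_1$-chain of \emph{consecutive} $\vec{\F}$-selfdual degrees immediately above $[B]_{\vec{\F}}$. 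Consequently the entire segment of the $\vec{\F}$-hierarchy from $[B]_{\vec{\F}}$ up to $s_\xi^\alpha[B]_{\vec{\F}}$ is exhausted by the selfdual degrees $s_\xi^\gamma[B]_{\vec{\F}}$ with $1 \leq \gamma \leq \alpha$; applied to the sandwiched degree $[A]_{\vec{\F}}$, this yields that $[A]_{\vec{\F}}$ is itself $\vec{\F}$-selfdual, i.e., $A \leq_{\vec{\F}} \neg A$.

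The ``in particular'' clause then follows, because any reduction witnessing $A \leq_{\vec{\F}} \neg A$ must lie in some $\F_n \simeq \sf{D}_{\mu_n}$, so $A \leq_{\sf{D}_\mu} \neg A$ for $\mu = \mu_n < \xi$. The qualitative heart of the argument is the sandwich step: once the $\L$-minimal representative $B$ is known (via Proposition~\ref{propequivSDP}) to be already $\vec{\F}$-selfdual, the Lipschitz-like structure of the $\vec{\F}$-hierarchy leaves no room for a nonselfdual pair inside the short interval containing $[A]_{\vec{\F}}$, and the theorem drops out of the already-established Proposition~\ref{propIIIvsII}.
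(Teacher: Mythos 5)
Your proof is correct and follows essentially the same route as the paper: reduce to $\sf{D}^\W_\xi$, take an $\L$-minimal $B$ in the $\G_\xi$-degree of $A$ (so $B \leq_\L \neg B$ via Proposition~\ref{propequivSDP}), and invoke Proposition~\ref{propIIIvsII} to locate $A$ Wadge-below some $C$ in the $\omega_1$-chain of consecutive $\vec{\F}$-selfdual degrees above $[B]_{\vec{\F}}$. The only difference is cosmetic: the paper takes $\alpha$ minimal and argues that $A <_{\vec{\F}} C$ would contradict minimality (hence $A \equiv_{\vec{\F}} C$, which is selfdual), whereas you skip the minimality of $\alpha$ and instead observe that $B \leq_{\vec{\F}} A \leq_{\vec{\F}} C$ sandwiches $[A]_{\vec{\F}}$ inside the initial segment of the selfdual $\omega_1$-chain — both steps rest on the same structural fact that every degree in that interval is one of the $s_\xi^\gamma[B]_{\vec{\F}}$'s.
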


\begin{proof}
One direction is easy, as $\bigcup_{\mu<\xi} \sf{D}_\mu \subseteq \sf{D}_\xi \simeq \G_\xi$. For the other direction let $B$ be $\L$-minimal in $[A]_{\G_\xi}$, so that $B \leq_\L \neg B$. As $\G_\xi \simeq \sf{D}^\W_\xi$, apply Proposition \ref{propIIIvsII} and  let $\alpha$ be minimal such that $A \leq_\W C$ for some $C \in s_\xi^\alpha[B]_{\vec{\F}}$. Since $A <_{\vec{\F}} C$ contradicts the minimality of $\alpha$, we must have $C \leq_{\vec{\F}} A$ and therefore $A \equiv_{\vec{\F}} C$: but as $C$ is $\vec{\F}$-selfdual (since $B$ is) we get $A \leq_{\vec{\F}} \neg A$ as desired.
\end{proof}

All this discussion solves the problem of comparing the $\mathcal{H}$-hierarchy with the $\G_\xi$-hierarchy:  using the fact that $\G_\xi \simeq \sf{D}^\W_\xi$, $\mathcal{H} \simeq \vec{\F}$ (where $\vec{\F}$ is any regular chain of rank $\xi$), and $s_{\vec{\F}} (A) \leq_{\sf{D}^\W_\xi} A$, we get that each $\G_\xi$-selfdual degree is exactly the union of a (maximal) $\omega_1$-chain of consecutive $\mathcal{H}$-selfdual degrees.

\section*{Appendix: Some alternative proofs of the {\bf SDP}}

In many of the concrete examples, one can directly prove (in a simpler way) that a Borel set of reductions $\F$ has the \textbf{SDP} using the fact that $\Lip(2) \subseteq \F$ and applying Proposition \ref{propequivSDP}. 
For instance, if $\F = \Lip$ or $\F = \sf{UCont}$, given a set $A \leq_\F \neg A$ which is $\L$-minimal in its $\F$-degree we can use the fact that $A \leq_\W \neg A$ (since $\Lip \subseteq \sf{UCont} \subseteq \W$) and then apply the Steel--Van Wesep Theorem to get $A \leq_\L \neg A$.

In the case of a regular chains of reductions $ \vec{\F} = \seq{\F_n}{n \in \omega}$, we can prove that if $A \leq_{\vec{\F}} \neg A$ and $B$ is $\L$-minimal in $[A]_{\vec{\F}}$ then $B \leq_\L
\neg B$ as follows:
let $n_0$ be minimal such that $A \leq_{\F_{n_0}} \neg A$, so that $A
\leq_{\F_m} \neg A$ for every $m \geq n_0$. Moreover, for every $m\geq n_0$
let $B_m$ be
$\L$-minimal in $[A]_{\F_m}$ (so that $B_m \leq_\L \neg B_m$ by Corollary
5.4 in \cite{mottorosborelamenability}), and
note that if $0 \leq k \leq m$ then $B_m
\leq_\L C$ for every $C \in [A]_{\F_k}$ because $\vec{\F}$ is regular
(in particular, if $n_0 \leq k
\leq m$ then $B_m \leq_\L B_k$). Since $\leq_\L$ is well-founded, there must be
some $n_1 \geq n_0$ such that $B_m \equiv_\L B_{n_1}$ for every $m \geq n_1$.
Put $B' = B_{n_1}$: then $B' \leq_\L \neg B'$  and $B'$ is easily seen to be $\L$-minimal in $[A]_{\vec{\F}}$, so that $B' \equiv_\L B$ and we are done.

However, Baire reductions $\sf{B}_\xi$ are perhaps the most interesting case.

\begin{lemmanonumber}[$\ZF+\ACOR$]
  Let $A,B \subseteq \RR$ be such that $A \leq_{\sf{B}_\xi} B$. Then
  there is some $C \equiv_{\sf{B}_\xi} A$ such that $C \leq_\L A$ and
  $C \leq_\L B$.
\end{lemmanonumber}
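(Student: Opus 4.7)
The plan is to build $C$ via Kuratowski's Lemma \ref{lemmarel} combined with a carefully tailored Luzin scheme that is ``thin'' both in $\tau$ and in the metric pulled back by the reduction. Fix $\mu < \xi$ and $h \in \B_\mu$ witnessing $A \leq_{\sf{B}_\xi} B$, i.e.\ $h^{-1}(B) = A$, and assume the nontrivial case $A \neq \emptyset,\RR$. The idea is that $C$ will live on a closed subset $F \subseteq \RR$ that simultaneously ``sees'' both $A$ and $B$ in a Lipschitz way.

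First I would apply Lemma \ref{lemmarel} to the countable family $\{\bN_s, h^{-1}(\bN_s) \mid s \in \seqo\} \subseteq \bSigma^0_{\mu+1}$ to obtain a Polish zero-dimensional topology $\tau'$ on $\RR$, refining $\tau$, with a countable clopen basis in $\bDelta^0_{\mu+1}(\tau)$, under which $h$ becomes continuous and each $\bN_s$ is $\tau'$-clopen. In particular every ``bi-neighbourhood'' $\bN_t \cap h^{-1}(\bN_{t'})$ is $\tau'$-clopen and lies in $\bDelta^0_{\mu+1}(\tau)$, and for each $n$ the family $\{\bN_t \cap h^{-1}(\bN_{t'}) \mid |t|=|t'|=n\}$ is a $\bDelta^0_{\mu+1}$-partition of $\RR$ refining its analogue at level $n-1$.

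Next I would turn these nested partitions into a Luzin scheme $\{B_s \mid s \in T\}$ on a pruned tree $T \subseteq \seqo$, with $B_\emptyset = \RR$, $B_s = \bigsqcup_k B_{s\conc k}$, and, after reindexing the cells at each level via a fixed bijection $\omega \times \omega \onto \omega$, $B_s \subseteq \bN_{t(s)} \cap h^{-1}(\bN_{t'(s)})$ with $|t(s)| = |t'(s)| = |s|$. Setting $F = [T] \subseteq \RR$ (closed in $\tau$), the standard assignment $\Phi \colon F \to \RR$, $\Phi(\alpha) = $ the unique point of $\bigcap_n B_{\alpha \restriction n}$, is a bijection of $F$ onto $\RR$. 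The ``dual thinness'' condition makes both $\Phi$ and $h \circ \Phi$ Lipschitz-$1$, since both $B_s$ and $h(B_s) \subseteq \bN_{t'(s)}$ have $\tau$-diameter $\leq 2^{-|s|}$; and $\Phi^{-1} \colon \RR \to F \subseteq \RR$ is of Baire class $\mu$, because $(\Phi^{-1})^{-1}(\bN_s \cap F) = B_s \in \bSigma^0_{\mu+1}(\tau)$.

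Finally I would set $C = (\Phi \circ r)^{-1}(A)$, where $r \colon \RR \to F$ is the standard Lipschitz-$1$ retraction in Baire space (extend each $y$ past its last node in $T$ by a fixed extension). Then $C \leq_\L A$ is witnessed by $\Phi \circ r$, and $C \leq_\L B$ by $h \circ \Phi \circ r$, both Lipschitz-$1$; the second computation uses exactly $h^{-1}(B) = A$. For $C \equiv_{\sf{B}_\xi} A$ the direction $C \leq_{\sf{B}_\xi} A$ is immediate; conversely $\Phi^{-1} \in \B_\mu \subseteq \sf{B}_\xi$ reduces $A$ to $C$, because $\Phi^{-1}(x) \in F$ forces $r(\Phi^{-1}(x)) = \Phi^{-1}(x)$, yielding $\Phi \circ r \circ \Phi^{-1} = \id_\RR$ and hence $(\Phi^{-1})^{-1}(C) = A$. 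The main obstacle is the dual thinness condition in the Luzin scheme: a naive scheme refining only by the basis of $\tau'$ would make $\Phi$ Lipschitz-$1$ but leave $h \circ \Phi$ merely continuous; refining simultaneously by the two $\tau'$-clopen families $\{\bN_t\}$ and $\{h^{-1}(\bN_{t'})\}$ is precisely what converts the single Baire class $\mu$ reduction into a pair of Lipschitz-$1$ reductions, at the cost of replacing $A$ by the $\sf{B}_\xi$-equivalent $C$.
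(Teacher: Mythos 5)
Your strategy is genuinely different from the paper's: the paper first produces continuous (Wadge) reductions $A' \leq_\W A$ and $A' \leq_\W B$ by passing to a homeomorphic copy of $(\RR,\tau')$ sitting closed in $(\RR,\tau)$, and then invokes Lemma~19 of Andretta's \emph{Equivalence between Wadge and Lipschitz determinacy} twice (as in Lemma~8 of Andretta--Martin) to upgrade to Lipschitz. Your plan is to get Lipschitz directly by building a Luzin scheme whose cells are thin simultaneously in the $\tau$-metric and in the $h$-pullback metric. This would be more self-contained if it worked, but there is a genuine gap in the well-definedness of $\Phi$.

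The issue is the assertion that $\Phi(\alpha)=\text{``the unique point of }\bigcap_n B_{\alpha\restriction n}\text{''}$ defines a bijection of $[T]$ onto $\RR$. Your cells $B_{\alpha\restriction n}\subseteq \bN_{t_n}\cap h^{-1}(\bN_{t'_n})$ have $\tau$-diameter tending to $0$ (from the $\bN_{t_n}$ side), so the intersection contains \emph{at most} one point; and every $y\in\RR$ lies on a unique branch, so $\Phi$ is onto. But nothing guarantees the intersection is \emph{nonempty} for an arbitrary branch $\alpha\in[T]$. Writing $y=\bigcup_n t_n$ and $z=\bigcup_n t'_n$, the intersection is $\{y\}\cap h^{-1}(\{z\})$, which is nonempty iff $h(y)=z$. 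If $h$ is discontinuous at $y$ --- say $y_m\to y$ with $h(y_m)\to z\neq h(y)$ --- then each $B_{\alpha\restriction n}$ is nonempty (it eventually contains the $y_m$'s) yet $\bigcap_n B_{\alpha\restriction n}=\emptyset$, so $\alpha\in[T]$ has no image. The cells are $\tau'$-clopen, but that does not help because you have not controlled their $\tau'$-diameter, and completeness only forces nonemptiness when diameters vanish with respect to a complete metric compatible with the closedness of the cells.

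The fix is small and stays within your framework: fix a complete metric $d'$ for the Polish topology $\tau'$ produced by Lemma~\ref{lemmarel}, and refine the cells at level $n$ further by a countable $\tau'$-clopen partition into pieces of $d'$-diameter $\leq 2^{-n}$. Such a partition exists because $\tau'$ is zero-dimensional and has a countable clopen basis inside $\bDelta^0_{\mu+1}(\tau)$, so the refined cells remain $\bDelta^0_{\mu+1}(\tau)$, and the $\tau$- and $h$-pullback-diameter bounds are preserved. Now Polish completeness of $d'$ guarantees that each branch's intersection is a singleton, and the rest of your argument --- Lipschitzness of $\Phi$ and of $h\circ\Phi$, $\Phi^{-1}\in\B_\mu$, $\Phi\circ r\circ\Phi^{-1}=\id$ --- goes through as written. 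With this repair your proof is correct and, unlike the paper's, avoids appealing to the Andretta and Andretta--Martin lemmas.
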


\begin{proof}
  It is enough to prove that there is some $A' \equiv_{\sf{B}_\xi} A$ such
  that $A' \leq_\W A$ and $A' \leq_\W B$: then applying twice Lemma 19 of \cite{andrettaequivalence} we get  the desired $C$ as in the proof of Lemma 8 in \cite{andrettamartin}.
  Let $\mu < \xi$ and $f \in
  \B_\mu$ be such that $A = f^{-1}(B)$. Applying Lemma
  \ref{lemmarel} to the family $\{f^{-1}(\bN_s)\mid s \in
  {}^{<\omega}\omega\}\subseteq \bDelta^0_{\mu+1}$, we get a new
  zero-dimensional Polish topology $\tau'\supseteq \tau$ on $\RR$ such
  that $f\colon  (\RR,\tau') \to (\RR,\tau)$ is continuous and
  $\bSigma^0_1(\tau') \subseteq \bSigma^0_{\mu+1}(\tau)$. Let $H\colon 
  (F,\tau) \to (\RR,\tau')$ be an homeomorphism between a closed set
  $F\subseteq \RR$ and $\RR$ endowed with the new topology, and let
  $r\colon \RR \onto F$ be a retraction on $F$. Finally, put $A' = (H \circ
  r)^{-1}(A)$: since  $H^{-1}\colon  (\RR,\tau) \to (\RR,\tau)$ is of Baire class $\mu$ we get  $A \leq_{\sf{B}_\xi} A'$,  and since
   $H \circ r$ and $f \circ H \circ r$ are clearly
  continuous function from $(\RR,\tau)$ to $(\RR,\tau)$ which witness $A' \leq_\W A$ and $A' \leq_\W B$,
  respectively, $A'$ is as required.
\end{proof}

Let now $B$ be $\L$-minimal in $[A]_{\sf{B}_\xi}$, where $A \leq_{\sf{B}_\xi} \neg A$. Since $B
  \leq_{\sf{B}_\xi} \neg B$, we can apply the previous lemma
  to get $C \equiv_{\sf{B}_\xi} B$
  such that $C \leq_\L B$ and $C \leq_\L \neg B$. By minimality of
  $B$, we must have $B \equiv_\L C$ and hence $B \leq_\L \neg B$.

\end{document}